\def\build#1_#2^#3{\mathrel{\mathop{\kern 0pt#1}\limits_{#2}^{#3}}}
\newcommand{\T}{{\mathbb{T}}}
\newcommand{\R}{{\mathbb{R}}}
\newcommand{\C}{{\mathbb{C}}}
\newcommand{\Z}{{\mathbb{Z}}}
\newcommand{\N}{{\mathbb{N}}}
\newcommand{\Bc}{\mathcal{B}}
\newcommand{\Cc}{\mathcal{C}}
\newcommand{\Dc}{\mathcal{D}}
\newcommand{\Hc}{\mathcal{H}}
\newcommand{\Ic}{\mathcal{I}}
\newcommand{\Jc}{\mathcal{J}}
\newcommand{\Kc}{\mathcal{K}}
\newcommand{\Sc}{\mathcal{S}}
\newcommand{\Wc}{\mathcal{W}}
\newcommand{\Oc}{\mathcal{O}}
\newcommand{\Hr}{\mathscr{H}}
\newcommand{\Ran}{\mathrm{Ran}}
\newcommand{\Thr}{\operatorname{Thr}}
\newcommand{\un}{{\rm \bf {1}}}
\numberwithin{equation}{section}
\def\id{\mathop{\rm id}\nolimits}
\def\rmi{{\rm i}}
\def \lint{[\![}
\def \rint{]\!]}
\newtheorem{theorem}{Theorem}[section]
\newtheorem{proposition}[theorem]{Proposition}
\newtheorem{lemma}[theorem]{Lemma}
\newtheorem{remark}[theorem]{Remark}
\newtheorem{definition}[theorem]{Definition}
\newtheorem{corollary}[theorem]{Corollary}
\begin{document}
\title[Interior Spectral Windows and Transport on $\mathbb{Z}^d$]%
{Interior Spectral Windows and Transport for Discrete Fractional Laplacians on $d$-Dimensional Hypercubic Lattices}
\author{Nassim Athmouni}
\address{Universit- de Gafsa, Campus Universitaire 2112, Tunisie}\email{\tt nassim.athmouni@fsgf.u-gafsa.tn}
\email{\tt athmouninassim@yahoo.fr}
\subjclass[2020]{Primary 47A10, 47A40; Secondary 47B25, 35P25, 81Q10, 39A70}
\keywords{Fractional Laplacian, Discrete Schr\"odinger operators, Mourre theory, Limiting Absorption Principle, Anisotropic operators, Nonlocal dynamics, Lattice scattering}


\begin{abstract}
We study anisotropic fractional discrete Laplacians $\Delta_{\mathbb{Z}^d}^{\vec{\mathbf{r}}}$ with exponents $\vec{\mathbf{r}}\in\mathbb{R}^d\setminus\{0\}$ on $\ell^2(\mathbb{Z}^d)$.
We establish a Mourre estimate on compact energy intervals away from thresholds.
As consequences we derive a Limiting Absorption Principle in weighted spaces, propagation estimates (minimal velocity and local decay), and the existence and completeness of local wave operators for perturbations $H=\Delta_{\mathbb{Z}^d}^{\vec{\mathbf{r}}}+W(Q)$, where $W$ is an anisotropically decaying potential of long--range type.
In the stationary scattering framework we construct the on--shell scattering matrix $S(\lambda)$, prove the optical theorem, and, under a standard trace--class assumption on $W$, establish the Birman--Krein formula $\det S(\lambda)=\exp(-2\pi i\,\xi(\lambda))$.
\end{abstract}

\maketitle
\tableofcontents
\section{Introduction and main results}

Spectral graph theory has recently experienced a resurgence due to its wide-ranging applications in
quantum lattice models, solid-state physics, and the study of discrete structures. This is particularly evident in studies of discrete Laplacians \cite{AD,AEG,BG,S,GG,Ch,Mic,Gk} and their magnetic counterparts \cite{GT,Da1,GoMo,AEG1,ABDE}.
A central tool in analyzing their essential spectrum and scattering behavior
relies on the \emph{positive commutator method}, successfully applied to
lattices such as $\Z^d$~\cite{PR,BoSa,T1},
binary trees~\cite{S,AF,GG}, general graphs~\cite{MRT},
and even nontrivial geometries including cusps, funnels, triangular or graphene lattices~\cite{T2,GoMo,AEG,AEG1,AEGJ,AEGJ2}.

A cornerstone of modern spectral theory is the Mourre commutator method \cite{GJ,GJ1,Mo81,Ge,ABG},
which provides tools to prove the Limiting Absorption Principle (LAP),
absolute continuity of the spectrum, and \emph{propagation estimates} essential to scattering theory.
While this framework is well developed for \emph{local operators}
(such as nearest-neighbour discrete Laplacians),
its extension to \emph{nonlocal} and \emph{fractional} discrete Laplacians
introduces new analytic challenges.

Classical discrete Laplacians capture only \emph{short-range, nearest-neighbour} interactions.
Modern models of anomalous diffusion and long-range transport, however,
require operators whose influence decays \emph{algebraically} rather than being compactly supported.
This motivates the study of \emph{fractional powers} of the discrete Laplacians~\cite{JKLQ,BDL,K2017,Or,Ortigueira2014Riesz} and their magnetic perturbations~\cite{Fan2021}:
they naturally describe long-range effects on graphs~\cite{TZ},
generate nonlocal transport phenomena~\cite{ScSo2012},
and provide a bridge between discrete models and continuous nonlocal PDEs.

Two complementary viewpoints coexist in the literature.
The first one, based on the spectral theorem, exploits the fact that $\Delta_{\mathbb{Z}^d}$ is a self-adjoint and bounded operator to define $\Delta^{\vec{\mathbf{r}}}_{\mathbb{Z}^d}$ as a spectral multiplier:
\[
\Delta^{\vec{\mathbf{r}}}_{\mathbb{Z}^d} = \int_{\sigma(\Delta_{\mathbb{Z}^d})} \lambda^s \, dE_\lambda,
\]
where $\sigma(\Delta_{\mathbb{Z}^d})$ corresponds to the spectrum of $\Delta_{\mathbb{Z}^d}$ and $E_\lambda$ its spectral projection (For more details, see \cite{RS,SI2015,Dav}.).

This construction, intrinsic and coordinate-free, naturally extends to negative or even complex exponents.
It also highlights the deeply nonlocal nature of the operator:
fractional powers inherently induce long-range interactions between
distant points in configuration space~\cite{ScSo2012,K2017}.
\\The second approach is a \emph{Series or kernel representations}.
    Explicit expansions in terms of lattice shifts or finite differences
    reveal how distant sites influence each other~\cite{K2017},
    a key point for both numerics and decay analysis.
\smallskip
In the translation-invariant case of the infinite lattice $\mathbb{Z}^d$, the situation is considerably simpler. The discrete Laplacian is diagonalized by the Fourier transform, with symbol
\[
\widehat{\Delta_d}(\xi) = -\,4\sum_{j=1}^d \sin^2(\pi \xi_j), \qquad \xi\in[-\tfrac12,\tfrac12]^d,
\]
which provides a natural starting point for the spectral definition of fractional powers~\cite{RS,Dav}. However, as the dimension grows or when anisotropy and magnetic perturbations are present~\cite{Fan2021,L2000}, the increasing complexity of this symbol makes tractable power series expansions increasingly elusive.

\smallskip
A parallel line of research initiated by Ortigueira has focused on developing a
discrete-time fractional calculus. In \cite{Or}, the distinction between
Riesz potentials and fractional Laplacians was emphasized in the signal-processing
context, while his survey \cite{Ortigueira2014Riesz} traced the origins and evolutions
of discrete fractional difference operators and introduced new formalisms. More
recently, in \cite{Ortigueira_MD_DMDTFT_2024} he proposed a multidimensional
DTFT-based construction of fractional derivatives, essentially in the isotropic case
$\alpha>0$. Our contribution is of a complementary nature: instead of a
transform-based calculus, we construct anisotropic fractional Laplacians
$\Delta_{\Z^d}^{\vec r}$ with general exponents $r_j\in\R$ as self-adjoint operators
on $\ell^2(\Z^d)$, analyze their spectra and domains (including negative orders), and
develop a full operator-theoretic framework leading to Mourre estimates, limiting
absorption principles, and scattering. This technical shift---from signal-processing
motivations to spectral and dynamical analysis---is at the core of the novelty of the
present work, placing it at the crossroads between harmonic analysis, spectral theory,
and mathematical physics.
.

\smallskip

In this paper, we clarify the interplay between spectral and algebraic approaches to fractional discrete Laplacians.
We analyze the structure and limitations of power-series expansions in dimension one and higher,
and investigate their behavior under domain restrictions such as $\N$.
Our approach combines spectral theory, discrete Fourier analysis, and commutator methods,
and lays the groundwork for a rigorous scattering/propagation theory for nonlocal operators on discrete structures.

Let $\Z^d$ denote the standard \(d\)-dimensional lattice and let $\ell^2(\Z^d)$ be the Hilbert space of square-summable sequences, with norm
\[
\|f\|^2=\sum_{n\in\Z^{d}}|f(n)|^2.
\]
We let $\Cc_c(\Z^d)$ denote the dense subspace of finitely supported functions.
For each \(j\in\{1,\dots,d\}\), define the unitary shifts \(U_j,U_j^*\) by
\[
(U_j f)(n):=f(n+e_j),\qquad (U_j^* f)(n):=f(n-e_j),
\]
where \(e_j\) is the \(j\)-th canonical basis vector. The position operator \(Q_j\) acts by
\[
(Q_j f)(n):=n_j f(n).
\]
Given a bounded sequence $F:\Z^d\to\C$ and $g\in\ell^2(\Z^d)$, the multiplication operator is
$(F(Q)g)(n)=F(n)g(n)$.
For $d\ge1$ set $\langle n\rangle:=(1+|n|^2)^{1/2}$ and define the discrete Schwartz space
\[
\Sc(\Z^d):=\Bigl\{\varphi:\Z^d\to\C\ \Big|\ \forall m\in\N,\
\|\varphi\|_m:=\sup_{n\in\Z^d}\langle n\rangle^{\,m}|\varphi(n)|<\infty\Bigr\}.
\]
With the Fr\'echet topology generated by the seminorms $\|\cdot\|_m$, its continuous dual
$\Sc'(\Z^d)$ is the space of tempered distributions on $\Z^d$.
Under the canonical pairing $\langle u,\varphi\rangle:=\sum_{n\in\Z^d}u(n)\varphi(n)$ we have
\[
\Sc'(\Z^d)=\Bigl\{f:\Z^d\to\C\ \Big|\ \exists m,C>0:\ |f(n)|\le C\,\langle n\rangle^{\,m}\ \forall n\Bigr\}.
\]
We define fractional powers first on $\Sc(\Z^d)$ (equivalently on $\Cc_c(\Z^d)$) via spectral/functional calculus (or, under the Fourier transform, as multipliers with symbol $\vartheta_{\vec{\mathbf r}}(\theta)=\sum_{j=1}^d2^{r_j}(1-\cos\theta_j)^{r_j}$), and then extend them to $\Sc'(\Z^d)$ by duality; in this way $\Delta^{\vec{\mathbf r}}_{\Z^d}$ is viewed as a continuous map $\Sc'(\Z^d)\!\to\!\Sc'(\Z^d)$ with a tempered-distribution kernel; see Proposition~\ref{prop:all-r}.

In what follows we focus on the \emph{fractional discrete anisotropic Laplacian}
$\Delta^{\vec{\mathbf r}}_{\Z^d}$ acting on $\ell^2(\Z^d)$, where $\vec{\mathrm{r}}=(r_1,\dots,r_d)\neq0$:
\[
\Delta^{\vec{\mathrm{r}}}_{\Z^d}
:=\sum_{j=1}^d\mathrm{id}^{\otimes(j-1)}\otimes\Delta_{\Z}^{\,r_j}\otimes\mathrm{id}^{\otimes(d-j)}.
\]
We consider the perturbed operator
\[
H:=\Delta^{\vec{\mathrm{r}}}_{\Z^d}+W(Q),
\]
where the potential $W$ decays \emph{anisotropically} at infinity, in the sense that
\begin{enumerate}
\item[\textbf{(H0)}] $\lim_{\|n\|\to\infty}W(n)=0$;
\item[\textbf{(H1)}] $|W(n+e_j)-W(n)|\le C\,\Lambda(n)^{-\delta}\,\langle n_j\rangle^{-1}$ for all $j$,
\end{enumerate}
with $\Lambda(n):=\sum_{j=1}^d\langle n_j\rangle$.
The fractional discrete anisotropic Laplacian is essentially self-adjoint on $\Cc_c(\Z^d)$
(as a tensor sum of essentially self-adjoint one-dimensional operators); see Proposition~\ref{ESA}.
Moreover, its spectrum is
\[
\sigma\!\left(\Delta_{\Z^d}^{\vec{\mathrm{r}}}\right)
=\Bigl\{\ \sum_{j=1}^d\lambda_j\ \Big|\ \lambda_j\in\sigma\!\left(\Delta_{\Z}^{\,r_j}\right)\ \Bigr\},
\]
see Proposition~\ref{prop:spect}, and its finite threshold set is
\[
\Thr_{\mathrm{fin}}\!\bigl(H_{\Z^d}^{\vec{\mathrm{r}}}\bigr)
=\Bigl\{\ \sum_{j\in N}4^{\,r_j}+\sum_{j\in P}\epsilon_j\,4^{\,r_j}\ :\ \epsilon_j\in\{0,1\}\ \Bigr\},
\]
where $P:=\{j:\ r_j>0\}$ and $N:=\{j:\ r_j<0\}$; see Proposition~\ref{prop:thresholds-mixed}.

\begin{theorem}\label{thm:main}
Let $H=\Delta_{\Z^d}^{\vec{\mathrm r}}+W(Q)$ with real $W$ satisfying \textbf{(H0)} and \textbf{(H1)}.
Then $H\in\mathcal{C}^{1,1}_{\mathrm{loc}}(A_{\Z^d})$ on each
$\Ic\Subset\sigma(H_0)^\circ\setminus\Thr(\Delta_{\Z^d}^{\vec{\mathrm{r}}})$,
and localized Mourre theory yields:
\begin{enumerate} \renewcommand{\labelenumi}{\roman{enumi})}
\item \emph{Spectral purity.} $\sigma_{\mathrm{sc}}(H)\cap\Ic=\varnothing$ and $\sigma_{\mathrm{pp}}(H)\cap\Ic$ is finite.
\item \emph{(LAP)} The limits $(H-\lambda\mp i0)^{-1}$ exist as bounded operators
$\langle\Lambda(Q)\rangle^{-s}\ell^2(\Z^d)\to\langle\Lambda(Q)\rangle^{s}\ell^2(\Z^d)$
uniformly for $\lambda\in\Ic$ and all $s>\tfrac12$.
\item \emph{Propagation/local decay.} For all $s>\tfrac12$ and $\varphi\in C_c^\infty(\Ic)$,
\[
\int_{\R}\big\|\langle\Lambda(Q)\rangle^{-s}\,e^{-itH}\,\varphi(H)\,\langle\Lambda(Q)\rangle^{-s}f\big\|^2\,dt
\ \le\ C\,\|f\|^2.
\]
\item \emph{Asymptotic completeness.} For any $\chi\in C_c^\infty(\R)$ with $\chi\equiv1$ on $\Ic$, the local wave operators
\[
\Wc_\pm(H,H_0;\Ic):=\mathrm{s}\!-\!\lim_{t\to\pm\infty}e^{-itH}\,\chi(H_0)\,e^{itH_0}
\]
exist and satisfy $\mathrm{Ran}\,\Wc_\pm(H,H_0;\Ic)=E_\Ic(H_{\mathrm c})\,\ell^2(\Z^d)$.
In particular, scattering is asymptotically complete on $\Ic$.
\end{enumerate}
\end{theorem}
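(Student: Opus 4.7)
The plan is to cast the theorem as a direct application of the $\Cc^{1,1}$-Mourre theorem of Amrein--Boutet de Monvel--Georgescu. The three ingredients required are: a self-adjoint conjugate $A_{\Z^d}$, a strict Mourre estimate for $H_0=\Delta_{\Z^d}^{\vec{\mathrm r}}$ on every compact $\Ic\Subset\sigma(H_0)^\circ\setminus\Thr(\Delta_{\Z^d}^{\vec{\mathrm r}})$, and the regularity $H\in\Cc^{1,1}_{\mathrm{loc}}(A_{\Z^d})$ on $\Ic$. For the conjugate I take the anisotropic generator of discrete dilations $A_{\Z^d}:=\sum_{j=1}^d\id^{\otimes(j-1)}\otimes A_\Z\otimes\id^{\otimes(d-j)}$, where $A_\Z$ is the standard symmetrization of $\tfrac{1}{2i}(U-U^*)Q$ on $\Z$. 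This choice is engineered so that $\langle A_{\Z^d}\rangle$ is comparable to the anisotropic weight $\langle\Lambda(Q)\rangle$ appearing in the statement, via the canonical commutation relations between $U_j$ and $Q_j$.

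The first substantive step is the Mourre estimate for $H_0$. In the Fourier picture $\ell^2(\Z^d)\simeq L^2(\T^d)$ the operator $H_0$ becomes multiplication by $\vartheta_{\vec{\mathrm r}}(\theta)=\sum_{j=1}^d 2^{r_j}(1-\cos\theta_j)^{r_j}$, and a symmetrization of $[H_0,iA_{\Z^d}]$ is a first-order operator whose principal symbol is pointwise comparable to $|\nabla\vartheta_{\vec{\mathrm r}}(\theta)|^2$. By Proposition~\ref{prop:thresholds-mixed} the critical values of $\vartheta_{\vec{\mathrm r}}$, together with its non-smooth or singular values at the points where some $\theta_j=0$, are exactly the elements of $\Thr(\Delta_{\Z^d}^{\vec{\mathrm r}})$. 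Consequently the preimage $\vartheta_{\vec{\mathrm r}}^{-1}(\Ic)\subset\T^d$ is compact, the symbol is smooth there, and $|\nabla\vartheta_{\vec{\mathrm r}}|$ is bounded below by some $c_0>0$; a partition of unity in $\theta$ then upgrades this pointwise lower bound to the operator inequality $E_{\Ic}(H_0)\,[H_0,iA_{\Z^d}]\,E_{\Ic}(H_0)\geq c_0\,E_{\Ic}(H_0)$.

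Regularity is then routine. After cutoff by any $\chi\in C_c^\infty(\sigma(H_0)^\circ\setminus\Thr)$ equal to $1$ near $\Ic$, iterated commutators $\mathrm{ad}_{A_{\Z^d}}^{k}(\chi(H_0))$ correspond in Fourier to bounded smooth functions, which puts $H_0$ locally in $\Cc^\infty(A_{\Z^d})$ around $\Ic$. For the perturbation, $[W(Q),iA_{\Z^d}]$ reduces modulo bounded remainders to multiplication by $\sum_j(W(n+e_j)-W(n))\,n_j$, which by (H1) is bounded by a constant times $\Lambda(n)^{-\delta}$; combined with (H0) (compactness of $W(Q)$) and the Dini-type increment criterion of ABG this gives $W(Q)\in\Cc^{1,1}(A_{\Z^d})$. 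Summing, $H\in\Cc^{1,1}_{\mathrm{loc}}(A_{\Z^d})$ on $\Ic$, and the compactness of $W(Q)$ preserves the Mourre estimate for $H$ on $\Ic$ after possibly shrinking the window. Plugging this into the abstract Mourre theorem then yields (i) by the virial theorem, (ii) as the $\Cc^{1,1}$-LAP in the Besov scale of $A_{\Z^d}$---transported to the physical weights $\langle\Lambda(Q)\rangle^{\pm s}$ via Step~1---(iii) by Kato smoothness of $\langle\Lambda(Q)\rangle^{-s}\varphi(H)$, and (iv) by Cook's method for existence of the wave operators combined with the LAP applied symmetrically in $(H,H_0)$ for completeness.

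The analytically most delicate step is the Mourre estimate in Step~2. When $r_j\in(0,1)$ the symbol $\vartheta_{\vec{\mathrm r}}$ is only H\"older at $\theta_j=0$, and when $r_j<0$ it is singular there; the argument rests on verifying that every loss of smoothness \emph{and} every critical value of $\vartheta_{\vec{\mathrm r}}$ is absorbed into $\Thr(\Delta_{\Z^d}^{\vec{\mathrm r}})$ as given by Proposition~\ref{prop:thresholds-mixed}, which is precisely what makes a smooth symbolic calculus available on $\Ic$. A secondary difficulty is the transfer from the abstract weight $\langle A_{\Z^d}\rangle^{-s}$ produced by Mourre theory to the physical anisotropic weight $\langle\Lambda(Q)\rangle^{-s}$ of the statement; this is the point at which hypothesis \textbf{(H1)}---with its sharper factor $\langle n_j\rangle^{-1}$ in the $j$-th direction rather than an isotropic $\langle n\rangle^{-1}$---becomes indispensable.
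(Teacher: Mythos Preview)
Your overall strategy coincides with the paper's: reduce to the abstract $\Cc^{1,1}$-Mourre machinery of \cite{ABG} by supplying a conjugate operator, a strict Mourre estimate for $H_0$ on $\Ic$, and the $\Cc^{1,1}$ regularity of $W$ via the difference bound \textbf{(H1)}; the weight transfer $\langle A\rangle^{-s}\leftrightarrow\langle\Lambda(Q)\rangle^{-s}$ and the Kato-smoothness/Cook arguments for (iii)--(iv) also match.

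There is, however, a genuine gap in your choice of conjugate. You take $A_{\Z^d}=\sum_j\id^{\otimes(j-1)}\otimes A_\Z\otimes\id^{\otimes(d-j)}$ with the \emph{unsigned} one-dimensional dilation $A_\Z$, and assert that the symbol of $[H_0,iA_{\Z^d}]$ is pointwise comparable to $|\nabla\vartheta_{\vec{\mathrm r}}|^2$. This fails as soon as some $r_j<0$. In Fourier the commutator is multiplication by
\[
\sum_{j=1}^d \sin\theta_j\,\partial_{\theta_j}\vartheta_{\vec{\mathrm r}}(\theta)
\;=\;\sum_{j=1}^d 2\,r_j\,\sin^2\theta_j\,(2-2\cos\theta_j)^{r_j-1},
\]
whose $j$-th summand carries the sign of $r_j$. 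When $\vec{\mathrm r}$ has components of both signs this symbol changes sign on a single level set $\vartheta_{\vec{\mathrm r}}^{-1}(\lambda)$ for interior energies $\lambda\notin\Thr$, so no strict Mourre estimate can hold with this $A_{\Z^d}$. For a concrete obstruction take $d=2$, $r_1=1$, $r_2=-1$: the energy $\lambda=9/4$ lies strictly between the two finite thresholds $1/4$ and $17/4$, yet the commutator symbol is positive near $\theta=(\pi/2,\pi)$ and negative near $\theta\approx(0.1,0.68)$, both on the level set $\{\vartheta_{\vec{\mathrm r}}=9/4\}$. The paper's remedy is to insert a factor $\mathrm{sign}(r_j)$ into each one-dimensional piece,
\[
A_{\Z,r_j}:=-\tfrac{\mathrm{i}\,\mathrm{sign}(r_j)}{2}\bigl(U_j(Q_j+\tfrac12)-(Q_j+\tfrac12)U_j^*\bigr),
\]
after which $[\Delta_{\Z,j}^{r_j},iA_{\Z,r_j}]=|r_j|\,\Delta_{\Z,j}^{\,r_j-1}(4-\Delta_{\Z,j})\ge0$ for every $j$, and the strict lower bound on $\Ic$ follows. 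With this single correction the remainder of your outline (Fourier localization away from the polar set, $\Cc^2_{\mathrm{loc}}$ regularity of $H_0$, the $\Cc^{0,1}$ bound on $[W,iA]$ from \textbf{(H1)}, and the transfer to $\langle\Lambda(Q)\rangle$-weights) goes through essentially as written.
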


Items i)-iv) are standard consequences of localized Mourre theory on interior energies.
Once a Mourre estimate holds on $\Ic\Subset\sigma(H_0)^\circ\setminus\Thr$ and $H\in\mathcal{C}^{1,1}_{\mathrm{loc}}(A_{\Z^d})$,
\cite[Thms.~7.4.1-7.4.2]{ABG} yield the absence of singular continuous spectrum and the LAP ii)
with weights $\langle\Lambda(Q)\rangle^{\pm s}$ for all $s>\tfrac12$. In particular, on $\Ic$ the spectrum of $H$ is purely absolutely
continuous up to the finitely many eigenvalues allowed by i). Moreover, the LAP is equivalent to the local $H$-smoothness
of $\langle\Lambda(Q)\rangle^{-s}$ on $\Ic$ (Kato smoothness), which implies the local decay estimate in ~ iii); see
\cite[Ch.~7]{ABG} and \cite[Section VIII.C]{RS}. In the spectral representation this yields a Riemann-Lebesgue type decay:
for any $\varphi\in C_c^\infty(\Ic)$, any finitely supported $g$, and any $f$ in the absolutely continuous subspace of $H$,
\[
\langle g,\,e^{-itH}\,\varphi(H)f\rangle \xrightarrow[|t|\to\infty]{} 0,
\]
expressing that amplitudes leave any fixed spatial region as $|t|\to\infty$. Finally, ~iv)
(existence and completeness of the local wave operators on $\Ic$) follows from the LAP and the Mourre framework on interior energies; see again \cite[Ch.~7]{ABG}.

This work develops a localized (nonlocal) Mourre theory for fractional discrete Laplacians
$\Delta^{\vec{\mathrm{r}}}_{\Z^d}$ with anisotropic orders $\vec{\mathrm{r}}\neq0$.
Within this unified framework we prove, on compact energy windows
away from thresholds, the Limiting Absorption Principle and asymptotic completeness for fractional
difference operators. To the best of our knowledge, this is the first rigorous Mourre/LAP/scattering
treatment for fractional discrete Laplacians.\\
\noindent
The proof of Theorem~\ref{thm:main} relies on Mourre's positive commutator method \cite{Mo83,Ge2015,GGM1,GeGe,PR}, adapted to the discrete and nonlocal structure of fractional Laplacians, and rests on three explicit inputs:
(1) the existence of a \emph{self-adjoint conjugate operator} $A_{\Z^d,\vec{\mathrm r}}$ implementing discrete dilations on $\Z^d$;
(2) \emph{localized regularity away from thresholds}, namely $H_0=\Delta_{\Z^d}^{\vec{\mathrm r}}\in\mathcal{C}^2_{\mathrm{loc}}(A_{\Z^d,\vec{\mathrm r}})$ (hence $\mathcal{C}^{1,1}_{\mathrm{loc}}$) and $W(Q)\in\mathcal{C}^{1,1}(A_{\Z^d,\vec{\mathrm r}})$;
(3) a \emph{strict Mourre estimate} on each interior interval $\Ic\Subset\sigma(H_0)^\circ\setminus\Thr(\Delta_{\Z^d}^{\vec{\mathrm r}})$, stable under the perturbation $W$ with a compact remainder for $H$.
Together, (1)-(3) yield (i) spectral purity on $\Ic$ and (ii) the limiting absorption principle by \cite[Ch.~7, Thms.~7.4.1-7.4.2]{ABG} (see also \cite[App.~A]{ABG} for the Helffer-Sj{\"o}strand calculus and \cite[Thm.~VI.16]{RS} for Weyl's theorem).
In turn, (ii) implies (iii) propagation/local decay (via Kato $A$-smoothness) and (iv) asymptotic completeness (Cook-Kuroda method and abstract scattering theory; \cite[Ch.~7]{ABG}).

\medskip
We now describe the structure of the paper.
 In Section~2, we set the functional framework, recall discrete fractional Laplacians on $\Z$ and $\Z^d$,
records basic spectral/combinatorial identities (including normal-ordering expansions),
and introduces the commutator structure on $\Z^d$.
Section~3 handles anisotropically decaying perturbations $W(Q)$, proves the localized LAP,
propagation estimates, and the existence/completeness of local wave operators, yielding
Theorem~\ref{thm:main}. Section~4 presents applications: on interior energy windows, the stationary
representation of the scattering matrix (unitarity and the optical theorem), the Birman-Krein formula,
and a time-averaged ballistic transport.

\noindent\textbf{Notation.}
We denote by $\N$ the set of nonnegative integers (so $0\in\N$), and by $\lint a,b\rint:=[a,b]\cap\Z$.
We write $\un_X$ for the indicator of a set $X$. We denote by $\Kc(\Hc)$ the ideal of compact operators
on a separable Hilbert space $\Hc$. For sets $A,B$, $A\times B$ is the Cartesian product; if $A,B\subset\R$,
then $A\cdot B:=\{xy:\ (x,y)\in A\times B\}$.

\section{Functional framework for d-hypercubic Bravais lattices}
Several properties of the discrete fractional Laplacian $\Delta_{\mathbb{Z}}^{\vec{\mathbf{r}}}$ such as its diagonalization via Fourier transform, its representation as a convolution operator, and its binomial series expansion are known in the literature in various forms, particularly in the context of translation-invariant or convolution operators on abelian groups. However, these results are often stated in abstract harmonic analysis language, without explicit formulas adapted to the fractional setting or without a unified treatment of both positive and negative powers $r \in \mathbb{R}$.

In this section, we provide a self-contained and detailed derivation of these results in the discrete one-dimensional setting. Our aim is twofold. First, we adapt the classical theory to a framework that is directly usable for spectral and commutator estimates in later sections (e.g., Mourre theory, limiting absorption principle, and propagation estimates). Second, we present fully explicit expressions-such as the series expansion involving shift operators $U$, $U^*$ and binomial coefficients-that are rarely written out in the literature but play a crucial role in our analysis.

These structural properties will be essential for establishing regularity with respect to conjugate operators, studying spectral stability, and analyzing dynamics of nonlocal evolution equations on $\mathbb{Z}$ and related domains.

\subsection{Fractional powers of discrete Laplacians in the unidimensional setting}
The (forward) shift operator \(U\) on \(\ell^2(\mathbb{Z})\) is defined by
\[
(Uf)(n):=f(n-1),\qquad (U^*f)(n):=f(n+1).
\]
Then \(U\) is unitary and \(UU^*=U^*U=\mathrm{id}_{\ell^2(\mathbb{Z})}\).

We also denote by \(Q\) the position operator on \(\ell^2(\mathbb{Z})\) defined by \((Q f)(n) = n f(n)\).
For any \(s \in \mathbb{R}\), we define the weighted space \(\ell^2_s(\mathbb{Z}) := \{f \in \ell^2(\mathbb{Z}) \mid \| \langle Q \rangle^s f \| < \infty\}\), where \(\langle Q \rangle := (1 + Q^2)^{1/2}\).
Let \(\Delta_{\mathbb{Z}} := 2 \mathrm{id} - U - U^*\) denote the standard discrete Laplacian on \(\ell^2(\mathbb{Z})\).
This operator is self-adjoint and nonnegative on \(\ell^2(\mathbb{Z})\).
To extend the notion of fractional powers \(\Delta^r_{\mathbb{Z}}\) for any real \(r\), we use the discrete Fourier transform framework and the Borel functional calculus; this defines \(\Delta^r_{\mathbb{Z}}\) also for \(r<0\) (as an unbounded operator with appropriate domain).

\begin{proposition}\label{prop:all-r}
For $r\in\R$ and $u\in\mathcal S'(\Z)$,
\[
\widehat{\Delta_{\Z}^{\,r}u}(\theta)=\bigl(2(1-\cos\theta)\bigr)^r\,\widehat u(\theta),\qquad
(\Delta_{\Z}^{\,r}u)(n)=\sum_{k\in\Z}a_r(k)\,u(n-k),
\]
where
\[
a_r(k)=\frac{1}{2\pi}\int_{-\pi}^{\pi}\bigl(2(1-\cos\theta)\bigr)^r e^{\mathrm{i}k\theta}\,d\theta.
\]
The kernel $a_r$ is real-valued and even: $a_r(k)=a_r(-k)$.
\end{proposition}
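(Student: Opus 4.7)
The plan is to diagonalize via the discrete Fourier transform $\mathcal{F}\colon \ell^2(\Z)\to L^2(\T)$, $\T=[-\pi,\pi]$, defined by $(\mathcal{F}u)(\theta)=\sum_{n\in\Z}u(n)e^{-\mathrm{i}n\theta}$. First I would verify the intertwining relations $\mathcal{F}U\mathcal{F}^{-1}=e^{-\mathrm{i}\theta}$ and $\mathcal{F}U^*\mathcal{F}^{-1}=e^{\mathrm{i}\theta}$, so that $\mathcal{F}\Delta_{\Z}\mathcal{F}^{-1}$ is multiplication by the real, nonnegative, even symbol $m(\theta):=2(1-\cos\theta)\in[0,4]$. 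The Borel functional calculus then gives that $\mathcal{F}\Delta_{\Z}^{\,r}\mathcal{F}^{-1}$ is multiplication by $m(\theta)^r$: bounded self-adjoint for $r\ge 0$, and self-adjoint on the maximal domain $\{u\in\ell^2(\Z):m^r\hat u\in L^2(\T)\}$ for $r<0$. This yields the first identity of the statement.

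Next I would identify the convolution kernel by Fourier inversion. For $u\in\Sc(\Z)$, $\hat u\in C^\infty(\T)$, so
\[
(\Delta_{\Z}^{\,r}u)(n)=\frac{1}{2\pi}\int_{-\pi}^{\pi}m(\theta)^r\,\hat u(\theta)\,e^{\mathrm{i}n\theta}\,d\theta;
\]
substituting $\hat u(\theta)=\sum_{m\in\Z} u(m)e^{-\mathrm{i}m\theta}$ and exchanging sum and integral (justified by rapid decay of $u$ and, for $r>-1/2$, local integrability of $m^r$) yields the convolution formula $(\Delta_{\Z}^{\,r}u)(n)=\sum_k a_r(k)u(n-k)$ with $a_r$ as stated. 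The extension to $u\in\Sc'(\Z)$ then proceeds by duality, setting $\langle\Delta_{\Z}^{\,r}u,\varphi\rangle:=\langle u,\Delta_{\Z}^{\,r}\varphi\rangle$ for $\varphi\in\Sc(\Z)$, once one has checked that $\Delta_{\Z}^{\,r}$ preserves $\Sc(\Z)$ (smoothness of $\hat\varphi$ on $\T$ together with a controlled singularity of $m^r\hat\varphi$ at $\theta=0$ when $r<0$).

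Finally, reality and evenness of $a_r$ follow at once from the symmetries of the symbol. Since $m(-\theta)=m(\theta)$, the integrand $m(\theta)^r\sin(k\theta)$ is odd in $\theta$, forcing $\mathrm{Im}\,a_r(k)=0$; the change of variables $\theta\mapsto-\theta$ combined with the same symmetry gives $a_r(-k)=a_r(k)$. The main obstacle I expect is the case $r\le-1/2$, where $m(\theta)^r\sim|\theta|^{2r}$ fails to be locally integrable and both the integral defining $a_r(k)$ and the convolution series must be interpreted distributionally. The cleanest route is to work throughout on the Fourier side: $\mathcal{F}$ identifies $\Sc'(\Z)$ with a space of distributions on $\T$, one defines $m^r$ as a tempered distribution via finite-part regularization at $\theta=0$, and the convolution identity becomes the statement that multiplication by this distribution on the Fourier side corresponds, via duality with $\Sc(\Z)$, to convolution with $a_r$ on the spatial side. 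I would first establish the formula for $u\in\Sc(\Z)$ and $r>-1/2$, where everything converges classically, and then extend to the full range of $r$ and to $\Sc'(\Z)$ by continuity of the Fourier pairing.
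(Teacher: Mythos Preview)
Your proposal is correct and follows essentially the same route as the paper: Fourier diagonalization of $\Delta_{\Z}$ to the multiplier $\vartheta(\theta)=2(1-\cos\theta)$, Borel functional calculus to pass to $\vartheta^r$, identification of the convolution kernel $a_r$ by Fourier inversion, and extension to $\mathcal S'(\Z)$ by duality. If anything, you are more explicit than the paper about the distributional interpretation needed when $r\le-\tfrac12$; the paper simply says ``as a tempered distribution when needed'' and leaves the details to the reader.
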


\begin{proof}
Let $\mathcal F:\ell^2(\Z)\to L^2([-\pi,\pi])$ be the unitary discrete Fourier transform
\[
(\mathcal F u)(\theta)=\frac{1}{\sqrt{2\pi}}\sum_{n\in\Z}u(n)e^{-\mathrm{i}n\theta}.
\]
A direct computation gives
\[
(\mathcal F U \mathcal F^{*}f)(\theta)=e^{-\mathrm{i}\theta}f(\theta),\qquad
(\mathcal F U^{*} \mathcal F^{*}f)(\theta)=e^{\mathrm{i}\theta}f(\theta),
\]
hence
\begin{equation}\label{b-fourier}
\mathcal F\,\Delta_{\Z}\,\mathcal F^{*}
=\mathcal F\,(2I-(U+U^*))\,\mathcal F^{*}
=M_{\,2-e^{\mathrm{i}\theta}-e^{-\mathrm{i}\theta}}
=M_{\vartheta},\quad \vartheta(\theta):=2(1-\cos\theta).
\end{equation}
By the Borel functional calculus,
\(\mathcal F\,\Delta_{\Z}^{\,r}\,\mathcal F^{*}=M_{\vartheta^r}\),
whence the multiplier identity on $\ell^2(\Z)$; the extension to $\mathcal S'(\Z)$ follows by density/duality.
Define $a_r$ by $\widehat{a_r}=\vartheta^r$ (as a tempered distribution when needed). Then
\(\widehat{a_r*u}=\widehat{a_r}\,\widehat u=\vartheta^r\widehat u=\widehat{\Delta_\Z^{\,r}u}\),
and injectivity of $\mathcal F$ on $\mathcal S'(\Z)$ yields $\Delta_\Z^{\,r}u=a_r*u$.
Evenness and reality are clear from the integrand.
\end{proof}

\noindent\textbf{Regularity and summability of the kernel.}
\begin{itemize}
\item \emph{Integer $r\in\mathbb{N}^*$:} $a_r$ has finite support (local stencil).
\item \emph{Positive $r>0$ (non-integer allowed):} $a_r\in\ell^1(\mathbb{Z})$ and, as $|k|\to\infty$,
\[
a_r(k)=c_r\,|k|^{-1-2r}+\Oc \bigl(|k|^{-3-2r}\bigr),\qquad c_r\neq0.
\]
\item \emph{$r=0$:} $a_0=\delta_0$.
\item \emph{Negative $r<0$:} write $r=-s$ with $s>0$. Near $\theta=0$ the integrand behaves like $|\theta|^{2r}$.
For $-\tfrac{1}{2}<r<0$ (i.e.\ $0<s<\tfrac{1}{2}$) the integral defining $a_r(k)$ converge absolutely and
\[
a_{-s}(k)=c_{-s}\,|k|^{\,2s-1}+\Oc \bigl(|k|^{\,2s-3}\bigr)\quad(|k|\to\infty).
\]
For $r\le -\tfrac{1}{2}$ ($s\ge\tfrac12$), $a_r$ is only a tempered distribution. In all cases $r<0$, $a_r\notin\ell^1(\Z)$.
\end{itemize}

\noindent
While the diagonalization principle for $r>0$ is classical-indeed, the discrete Laplacian $\Delta_{\mathbb{Z}}$ is diagonalized by the Fourier transform with symbol $\vartheta(\theta)$, so that its fractional powers act as Fourier multipliers $\vartheta_r(\theta)$ and, by inverse transform, as convolution operators with explicit kernels (see, e.g., Reed--Simon~\cite[Chapter XIII.13]{RS}, Cycon-Froese-Kirsch-Simon~\cite[Chapter 5]{cl}, and Bucur-Valdinoci~\cite[Section 3]{BuVa}) the case $r<0$, corresponding to fractional inverse operators, is rarely discussed in detail. Here we provide a unified and explicit description of the spectrum for all $r\in\mathbb{R}$ in terms of the image of the symbol's endpoint interval $[0,4]$ under $t\mapsto t^{\,r}$. This precise characterization, together with the convolution-kernel representation inherited from $\vartheta_r$,
 is particularly useful for stability analysis, the Limiting Absorption Principle, and propagation estimates.

\begin{proposition}\label{prop:spec-all-r}
For $r\ge 0$,
\[
\sigma(\Delta_{\Z}^{\,r})=[0,\,4^{\,r}],
\]
and for $r<0$,
\[
\sigma(\Delta_{\Z}^{\,r})=[\,4^{\,r},\,\infty).
\]

\end{proposition}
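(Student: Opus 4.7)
The plan is to reduce the spectral computation to that of a multiplication operator via the diagonalization already established in Proposition~\ref{prop:all-r}, and then to compute the essential range of the symbol $\vartheta^{\,r}$.

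First, I would invoke Proposition~\ref{prop:all-r} (more precisely the identity $\mathcal F\,\Delta_{\Z}^{\,r}\,\mathcal F^{*}=M_{\vartheta^{\,r}}$ built in its proof via Borel functional calculus) to reduce the problem to determining the spectrum of the multiplication operator $M_{\vartheta^{\,r}}$ on $L^2([-\pi,\pi])$, where $\vartheta(\theta)=2(1-\cos\theta)$. Since $\mathcal F$ is unitary, $\sigma(\Delta_{\Z}^{\,r})=\sigma(M_{\vartheta^{\,r}})$, and it is a standard fact that the spectrum of a multiplication operator by a measurable function $g$ (with respect to Lebesgue measure) equals the essential range of $g$. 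So the whole problem reduces to computing $\mathrm{ess\,ran}(\vartheta^{\,r})$.

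Next, I would describe the range of $\vartheta$ itself: since $\cos\theta$ takes every value in $[-1,1]$ as $\theta$ ranges over $[-\pi,\pi]$, $\vartheta$ maps $[-\pi,\pi]$ continuously onto $[0,4]$, with $\vartheta(0)=0$ (attained only at $\theta=0$, a measure-zero set) and $\vartheta(\pm\pi)=4$. For $r\ge 0$, the map $t\mapsto t^{\,r}$ is continuous on $[0,4]$ and surjects onto $[0,4^{\,r}]$, so by continuity of $\vartheta$ and surjectivity of $t^{\,r}$ the essential range is the full closed interval $[0,4^{\,r}]$. For $r<0$, the symbol $\vartheta^{\,r}$ is defined a.e.\ on $[-\pi,\pi]$ (the zero of $\vartheta$ at $\theta=0$ is a null set) and is unbounded; the composition $t\mapsto t^{\,r}$ maps $(0,4]$ continuously and monotonically onto $[4^{\,r},\infty)$, so the essential range is $[4^{\,r},\infty)$. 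In particular $M_{\vartheta^{\,r}}$ is self-adjoint and unbounded with domain $\{f\in L^2:\vartheta^{\,r}f\in L^2\}$.

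The only delicate point is the case $r<0$: one must verify that the singularity of $\vartheta^{\,r}$ at $\theta=0$ does not contribute an isolated eigenvalue or point outside $[4^{\,r},\infty)$, but this follows from the standard characterization of the spectrum of $M_g$ as the essential range, which is insensitive to null-set behavior, together with the elementary observation that for any $\lambda\ge 4^{\,r}$ the preimage $\vartheta^{\,r\,-1}\!\bigl((\lambda-\varepsilon,\lambda+\varepsilon)\bigr)$ has positive Lebesgue measure (by continuity and monotonicity of $t\mapsto t^{\,r}$ on $(0,4]$), while for $\lambda<4^{\,r}$ it has measure zero. This concludes both cases.
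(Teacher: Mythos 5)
Your proposal is correct and follows essentially the same route as the paper: diagonalize $\Delta_{\Z}^{\,r}$ via the Fourier transform as the multiplication operator $M_{\vartheta^{\,r}}$ and identify the spectrum with the (essential) range of the symbol, with the same observation that the singularity of $\vartheta^{\,r}$ at $\theta=0$ for $r<0$ sits on a null set and is therefore harmless. The only difference is cosmetic — you phrase the conclusion via the essential range while the paper writes the closure of the image and additionally records the absence of eigenvalues from the measure-zero level sets — and both arguments share the same degenerate quibble at $r=0$, where the stated interval $[0,4^{\,0}]$ should really be $\{1\}$.
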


\begin{proof}
Since $\mathcal F\Delta_{\Z}\mathcal F^*=M_{\vartheta}$ with continuous
$\sigma(\theta)\in[0,4]$, we have
$\sigma(\Delta_{\Z}^{\,r})=\overline{\vartheta(\theta)^r\,[ -\pi,\pi]}$,
yielding the stated intervals. If $\Delta_{\Z}^{\,r}f=\lambda u$ in $\ell^2(\Z)$, then
$\vartheta(\theta)^r\widehat f(\theta)=\lambda \widehat f(\theta)$ a.e.,
so $\widehat u$ is supported in the level set $\{\theta:\vartheta(\theta)^r=\lambda\}$,
which has Lebesgue measure $0$ (finite set) for every $\lambda$.
Hence $\widehat f=0$ in $L^2$ and $f=0$.
\end{proof}

\begin{lemma}\label{marwa}
We have $\{0,4\}\cap\sigma_p(\Delta_{\Z})=\varnothing$ in $\ell^{2}(\Z)$ (where $\sigma_p(\Delta_{\Z})$ denote the point spectrum of $\Delta_{\Z}$).

\end{lemma}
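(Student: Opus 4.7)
The plan is to reduce the eigenvalue equation to a pointwise identity on the Fourier side, and then invoke the fact that the level sets of the symbol at the two spectral endpoints are finite.

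Concretely, I would apply the Fourier multiplier representation recorded in Proposition~\ref{prop:all-r} (with $r=1$): one has $\mathcal F\,\Delta_{\Z}\,\mathcal F^{*}=M_{\vartheta}$ on $L^2([-\pi,\pi])$, where $\vartheta(\theta)=2(1-\cos\theta)$ takes values in $[0,4]$. If $f\in\ell^2(\Z)$ satisfies $\Delta_{\Z}f=\lambda f$ with $\lambda\in\{0,4\}$, then by Plancherel this transports to
\[
\bigl(\vartheta(\theta)-\lambda\bigr)\,\widehat f(\theta)=0\qquad\text{for a.e. }\theta\in[-\pi,\pi],
\]
so $\widehat f$ is supported in the level set $\vartheta^{-1}(\{\lambda\})$.

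The second step is to identify these level sets explicitly. For $\lambda=0$, the equation $1-\cos\theta=0$ has the unique solution $\theta=0$ in $[-\pi,\pi]$, so $\vartheta^{-1}(\{0\})=\{0\}$. For $\lambda=4$, the equation $1-\cos\theta=2$ forces $\cos\theta=-1$, hence $\vartheta^{-1}(\{4\})=\{-\pi,\pi\}$. Both level sets are finite and therefore of Lebesgue measure zero, which forces $\widehat f=0$ in $L^2([-\pi,\pi])$ and hence $f=0$ by unitarity of $\mathcal F$.

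There is no real obstacle in this argument; it is exactly the specialization of the injectivity step already used in the proof of Proposition~\ref{prop:spec-all-r} to the two endpoints of the spectrum of $\Delta_{\Z}$. The point of stating it as a separate lemma is to record, for later use in the threshold analysis of the anisotropic fractional operator $\Delta_{\Z^d}^{\vec{\mathbf r}}$, that the band edges $0$ and $4$ of the one-dimensional symbol do not contribute to the point spectrum.
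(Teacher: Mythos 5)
Your argument is correct, but it follows a different route from the paper's own proof of this lemma. You diagonalize $\Delta_{\Z}$ via the Fourier transform and observe that the level sets $\vartheta^{-1}(\{0\})=\{0\}$ and $\vartheta^{-1}(\{4\})=\{-\pi,\pi\}$ have Lebesgue measure zero, so any $\ell^2$ eigenfunction must vanish; this is precisely the injectivity step already carried out in the paper's proof of Proposition~\ref{prop:spec-all-r}, and in fact it yields the stronger conclusion $\sigma_p(\Delta_{\Z})=\varnothing$ for \emph{every} $\lambda$, not just the endpoints. The paper instead gives a direct, Fourier-free computation in $\ell^2(\Z)$: from the eigenvalue equation $(U+U^*)f=\pm 2f$ one gets $\|(U+U^*)f\|^2=4\|f\|^2$, and combining this with unitarity of $U$ forces $\|(U-U^*)f\|^2=0$, i.e.\ $f(n+1)=f(n-1)$ for all $n$; such an $f$ is constant on each parity class of $\Z$ and hence cannot be square-summable unless it vanishes. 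Your approach buys generality and economy (it reuses the multiplier representation and needs no case analysis beyond identifying two finite level sets), while the paper's approach is more elementary and exhibits concretely what the would-be generalized eigenfunctions at the band edges look like (the constant and alternating sequences), which is the structural information behind the threshold discussion later on. Either proof is acceptable.
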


\begin{proof}
We will reason by the absurd and we assume that $0 \hbox{ or } 4\in\sigma_{p}(\Delta^r_{\Z})$.
Let $f\in\ell^2(\Z) \hbox{ such that } f\neq0$. We have
 \[\langle Uf,Uf\rangle+\langle U^*f,U^*f\rangle+\langle U^*f,Uf\rangle+\langle Uf,U^*f\rangle=4\|f\|^2.\]
Since $U$ is unitary, then
\[\langle Uf,Uf\rangle+\langle U^*f,U^*f\rangle-\langle U^*f,Uf\rangle-\langle Uf,U^*f\rangle=0.\]
Hence, $\langle(U-U^*)f,(U-U^*)f\rangle=0$. Then $(U-U^*)f=0$ for all $n\in\Z$ $f(n+1)-f(n-1)=0$.
Therefore, $f\!\restriction_{\overline{q}}$ is constant for all $\overline{q}\in\Z/2\Z$.
thus, $\|f\|^2_{\ell^2(\Z)}=\sum_{n\in\Z; \overline{n}=1}|c_1|^2+\sum_{n\in\Z;\overline{ n}=0}|c_2|^2=+\infty$.
\end{proof}

The fractional discrete Laplacian \( \Delta_{\mathbb{Z}}^r \) can be expressed as a power series in the symmetric shift operator \( \tfrac{1}{2}(U + U^*) \) using the generalized binomial expansion. The resulting expression is a combination of powers of the shift operators \( U \) and \( U^* \), weighted by combinatorial coefficients. While this structure is implicit in many works on fractional difference operators (see, e.g., Ortigueira~\cite{Or}), the fully explicit form in terms of double binomial sums is rarely written out and is detailed here for completeness.

\begin{proposition}\label{f-calculus}
Let \( r \in \mathbb{R} \). Then for every \( f \in \mathcal{C}_c(\mathbb{Z}) \), we have:
\begin{equation}\label{eq:binom-DeltaZ}
\Delta_{\mathbb{Z}}^r f(n)
= \sum_{h=0}^\infty (-1)^h\, 2^{r-h} \binom{r}{h}
\sum_{k=0}^h \binom{h}{k} U^{h - 2k} f(n),
\end{equation}
where the generalized binomial coefficients \( \binom{r}{h} \) are defined using the Gamma function:
\[
\binom{r}{h} := \frac{\Gamma(r+1)}{\Gamma(h+1)\Gamma(r - h + 1)}.
\]
\end{proposition}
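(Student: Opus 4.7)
My plan is to derive the identity at the level of Fourier symbols by combining two classical expansions, and then to justify convergence on $\Cc_c(\Z)$.

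First I would factor $\Delta_{\Z}=2(I-T)$ with $T:=\tfrac12(U+U^*)$, which is self-adjoint with spectrum $[-1,1]$ since its Fourier symbol is $\cos\theta$. Borel functional calculus then gives $\Delta_\Z^r = 2^r(I-T)^r$. Applying the generalized binomial identity $(1-t)^r = \sum_{h\geq 0}(-1)^h\binom{r}{h}t^h$ (valid for $|t|<1$) formally yields $\Delta_\Z^r = 2^r\sum_h (-1)^h\binom{r}{h} T^h$. Since $U$ and $U^*$ commute with $UU^*=I$, the ordinary binomial theorem gives $T^h = 2^{-h}\sum_{k=0}^h\binom{h}{k}U^{h-2k}$; substituting produces the claimed formula.

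For convergence on $\Cc_c(\Z)$ I would distinguish two regimes. When $r\geq 0$, Stirling-type asymptotics give $|\binom{r}{h}|=O(h^{-r-1})$, so $\sum_h|\binom{r}{h}|<\infty$; since $\|T\|=1$, the operator series converges absolutely in operator norm to $(I-T)^r$, and the identity is an equality of bounded operators, hence holds pointwise on $\Cc_c(\Z)$. When $r<0$, the operator series diverges in norm, so I would work on the Fourier side. For $f\in\Cc_c(\Z)$, $\widehat f$ is a trigonometric polynomial, and by Proposition~\ref{prop:all-r}
\[
\Delta_\Z^r f(n)=\frac{1}{2\pi}\int_{-\pi}^{\pi}2^r(1-\cos\theta)^r\,\widehat f(\theta)\,e^{\mathrm{i}n\theta}\,d\theta,
\]
interpreted distributionally for $r\leq -\tfrac12$. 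Truncating the binomial series at order $N$, I would exploit the sign pattern $(-1)^h\binom{r}{h}\geq 0$ (valid for $r<0$) to obtain monotone pointwise convergence of the partial sums to $(1-\cos\theta)^r$ on $\{\cos\theta\geq 0\}$, and alternating-series control on $\{\cos\theta<0\}$; passing to the limit under the integral by monotone/dominated convergence and using the regularity of $\widehat f$ recovers the identity.

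The main analytical obstacle is exactly this last step at $r<0$: the symbol $(1-\cos\theta)^r$ is singular at $\theta=0$ and the binomial partial sums themselves blow up there, so neither uniform nor naive dominated convergence is directly available. The argument hinges on the monotone positivity of the partial sums on the ``good'' half $\cos\theta\geq 0$, combined with careful handling of the singularity at $\theta=0$ (using integrability of $(1-\cos\theta)^r\widehat f(\theta)$ when $r>-\tfrac12$, or duality with $\Sc(\Z)$ and the continuous extension of $\Delta_\Z^r$ to $\Sc'(\Z)$ when $r\leq -\tfrac12$).
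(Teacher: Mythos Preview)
Your algebraic derivation---factoring $\Delta_{\Z}=2(I-T)$ with $T=\tfrac12(U+U^*)$, applying the generalized binomial series to $(I-T)^r$, and expanding $(U+U^*)^h$---is exactly the route the paper takes. Where you differ is in the convergence justification, and here you are actually more careful: the paper claims that for fixed $n$ ``only finitely many terms in $U^{h-2k}f(n)$ are nonzero'', which is false (for every $h$ there are values of $k$ with $|h-2k|$ bounded, hence landing in $\supp f$). Your operator-norm argument for $r>0$ via $\sum_h\bigl|\binom{r}{h}\bigr|<\infty$ is a correct replacement, and for $-\tfrac12<r<0$ your Fourier-side scheme works once you trade ``monotone convergence'' for dominated convergence with majorant $(1-\cos\theta)^r|\widehat f|$ on $\{\cos\theta\ge0\}$ (the nonnegative partial sums are bounded above by the limit) together with the uniform alternating-series bound on $\{\cos\theta<0\}$.

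For $r\le-\tfrac12$, however, there is a genuine gap that your appeal to duality does not close. Take $f=\delta_0$ and $n=0$: the $h$-th term of your series equals $(-1)^h 2^{r-h}\binom{r}{h}\binom{h}{h/2}\,\mathbf{1}_{\{h\ \mathrm{even}\}}$, and since $4^{-m}\binom{2m}{m}\sim(\pi m)^{-1/2}$ and $\bigl|\binom{r}{2m}\bigr|\sim c\,m^{-r-1}$, the general term behaves like a constant times $m^{-r-3/2}$; the series therefore diverges whenever $r\le-\tfrac12$ (all terms are positive for $r<0$, so the divergence is to $+\infty$). In particular the partial sums fail to converge even in $\Sc'(\Z)$, since their pairing with the test sequence $\delta_0\in\Sc(\Z)$ already blows up; invoking the tempered extension of $\Delta_{\Z}^{\,r}$ cannot rescue the binomial expansion. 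The identity as written thus requires either the restriction $r>-\tfrac12$, or a restriction to those $f\in\Cc_c(\Z)$ whose Fourier transform vanishes to sufficiently high order at $\theta=0$.
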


\begin{proof}
We start from the functional definition of the fractional Laplacian on \( \mathbb{Z} \):
\[
\Delta^r_{\mathbb{Z}} = 2^r \left( \mathrm{id}_{\ell^2(\Z)} - \tfrac{1}{2}(U + U^*) \right)^r.
\]

We now expand the operator power \( (I - T)^r \) using the generalized binomial theorem and Lemma \ref{marwa}, with \( T := \tfrac{1}{2}(U + U^*) \). For any \( r \in \mathbb{R} \), the expansion holds as a strongly convergent series on the dense subspace \( \mathcal{C}_c(\mathbb{Z}) \):
\[
(\mathrm{id}_{\ell^2(\Z)} - T)^r = \sum_{h=0}^\infty (-1)^h \binom{r}{h} T^h.
\]

Therefore, applying this expansion to \( f \in \mathcal{C}_c(\mathbb{Z}) \), we obtain:
\[
\Delta^r_{\mathbb{Z}} f
= 2^r \sum_{h=0}^\infty (-1)^h \binom{r}{h} \left( \tfrac{1}{2}(U + U^*) \right)^h f
= \sum_{h=0}^\infty (-1)^h 2^{r - h} \binom{r}{h} (U + U^*)^h f.
\]

Next, we compute the \( h \)-th power of the symmetric shift operator:
\[
(U + U^*)^h = \sum_{k=0}^h \binom{h}{k} U^{h - 2k},
\]
where we used the binomial identity for commuting  inverse operators \( U, U^* \).

Combining the two expansions gives:
\[
\Delta^r_{\mathbb{Z}} f(n)
= \sum_{h=0}^\infty (-1)^h 2^{r - h} \binom{r}{h} \sum_{k=0}^h \binom{h}{k} U^{h - 2k} f(n).
\]

Since \( f \in \mathcal{C}_c(\mathbb{Z}) \), only finitely many terms in \( U^{h - 2k} f(n) \) are nonzero for fixed \( n \), so the double series converges absolutely for each \( n \in \mathbb{Z} \). This completes the proof.
\end{proof}
Now, we deal with the question of the commutativity of $\Delta_{\Z}^r$.
\begin{lemma}\label{sb-space}
Let $r,s \in \mathbb{R}$ and define
\[
\mathcal{H}^{s \to r} := \{ f \in \mathcal{D}(\Delta_{\mathbb{Z}}^{s}) \;:\; \Delta_{\mathbb{Z}}^{s} f \in \mathcal{D}(\Delta_{\mathbb{Z}}^{r}) \}.
\]
Then:
\begin{enumerate}
\item If $r, s \ge 0$, one has
\[
\mathcal{H}^{s \to r} \cap \mathcal{H}^{r \to s} = \mathcal{D}(\Delta_{\mathbb{Z}}^{r+s}) = \ell^2(\mathbb{Z}).
\]
\item If $r, s < 0$, one has
\[
\mathcal{H}^{s \to r} \cap \mathcal{H}^{r \to s} = \mathcal{D}(\Delta_{\mathbb{Z}}^{r+s}).
\]
\item If $s \ge 0$ and $r \le 0$, then
\[
\mathcal{H}^{s \to r} \cap \mathcal{H}^{r \to s} = \mathcal{D}(\Delta_{\mathbb{Z}}^{r}) \subsetneq \ell^2(\mathbb{Z}),
\]
and in particular:
\begin{itemize}
\item if $r = 0$, equality holds with $\ell^2(\mathbb{Z})$;
\item if $r+s < 0$, then $\mathcal{H}^{s \to r} \cap \mathcal{H}^{r \to s} \subsetneq \mathcal{D}(\Delta_{\mathbb{Z}}^{r+s})$.
\end{itemize}
\end{enumerate}
\end{lemma}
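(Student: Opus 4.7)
My plan is to transport the whole statement to the Fourier side via the multiplier representation of Proposition~\ref{prop:all-r}, reducing every domain condition to a weighted $L^{2}$ condition on $[-\pi,\pi]$. After that, everything is driven by elementary pointwise comparisons of powers of the symbol $\vartheta(\theta)=2(1-\cos\theta)$, which takes values in $[0,4]$ and vanishes only at $\theta=0$.

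Concretely, by Proposition~\ref{prop:all-r} the Fourier transform $\mathcal F$ conjugates $\Delta_{\Z}^{t}$ into multiplication by $\vartheta^{t}$ for every $t\in\R$. Combined with the Borel functional calculus, this yields
$$\mathcal{D}(\Delta_{\Z}^{t})=\{f\in\ell^{2}(\Z):\vartheta^{t}\widehat f\in L^{2}([-\pi,\pi])\}.$$
Since a composition of spectral multipliers multiplies the symbols, I would immediately rewrite
$$\mathcal H^{s\to r}=\{f\in\ell^{2}(\Z):\vartheta^{s}\widehat f\in L^{2}\text{ and }\vartheta^{r+s}\widehat f\in L^{2}\},$$
and symmetrically for $\mathcal H^{r\to s}$, so that $\mathcal H^{s\to r}\cap\mathcal H^{r\to s}$ is characterized by the three simultaneous conditions $\vartheta^{s}\widehat f,\vartheta^{r}\widehat f,\vartheta^{r+s}\widehat f\in L^{2}$.

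The single analytic input I would use is the pointwise bound $\vartheta^{\alpha}=\vartheta^{\alpha-\beta}\vartheta^{\beta}\le 4^{\alpha-\beta}\vartheta^{\beta}$, valid whenever $\alpha\ge\beta$: the larger exponent yields the smaller weight. With this one inequality all three cases follow. If $r,s\ge0$ the three symbols are bounded, every $L^{2}$ condition is automatic, and the intersection equals $\ell^{2}(\Z)=\mathcal{D}(\Delta_{\Z}^{r+s})$. If $r,s<0$ then $r+s$ is the smallest exponent; applying the comparison with $(\alpha,\beta)=(r,r+s)$ and $(s,r+s)$ shows that $\vartheta^{r+s}\widehat f\in L^{2}$ already forces $\vartheta^{r}\widehat f,\vartheta^{s}\widehat f\in L^{2}$, and this single condition defines $\mathcal{D}(\Delta_{\Z}^{r+s})$, so the intersection coincides with $\mathcal{D}(\Delta_{\Z}^{r+s})$. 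If $s\ge0$ and $r\le0$, boundedness of $\vartheta^{s}$ makes $\vartheta^{s}\widehat f\in L^{2}$ automatic, and $\vartheta^{r+s}=\vartheta^{s}\vartheta^{r}\le 4^{s}\vartheta^{r}$ shows that $\vartheta^{r}\widehat f\in L^{2}$ implies $\vartheta^{r+s}\widehat f\in L^{2}$; hence the intersection reduces to the single requirement $\vartheta^{r}\widehat f\in L^{2}$, i.e.\ to $\mathcal{D}(\Delta_{\Z}^{r})$, which equals $\ell^{2}(\Z)$ precisely when $r=0$.

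For the strict inclusions in case~(3) I would exhibit explicit witnesses: for $r<0$, choosing $\widehat f$ with a tailored vanishing rate in a shrinking neighborhood of $\theta=0$ separates $\mathcal{D}(\Delta_{\Z}^{r})$ from $\ell^{2}(\Z)$ and, when $r+s<0$, from $\mathcal{D}(\Delta_{\Z}^{r+s})$ (because $\vartheta^{r}$ blows up strictly faster than $\vartheta^{r+s}$ at $0$). I expect no serious obstacle; the only point requiring attention is that for $r+s<0$ the a~priori condition $f\in\ell^{2}(\Z)$ must be recovered from $\vartheta^{r+s}\widehat f\in L^{2}$, which is immediate since $\vartheta^{r+s}\ge 4^{r+s}>0$ uniformly on $[-\pi,\pi]$ in that regime. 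Beyond that, the whole argument is essentially bookkeeping on three weighted $L^{2}$ conditions driven by a single inequality on the symbol.
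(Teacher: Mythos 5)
Your proposal is correct and follows essentially the same route as the paper: conjugate by the discrete Fourier transform, identify each domain with a weighted $L^2$ condition on the symbol $\vartheta(\theta)=2(1-\cos\theta)$, and compare weights pointwise. If anything, your explicit use of the inequality $\vartheta^{\alpha}\le 4^{\alpha-\beta}\vartheta^{\beta}$ for $\alpha\ge\beta$ makes case (2) more transparent than the paper's brief appeal to ``the product structure,'' and your observation that $\vartheta^{r+s}\ge 4^{r+s}>0$ recovers $f\in\ell^2(\mathbb{Z})$ for free is a detail the paper leaves implicit.
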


\begin{proof}
Applying \eqref{b-fourier}, the operator $\Delta_{\mathbb{Z}}^{t}$ becomes multiplication by
\[
\vartheta(\theta) := \left( 2(1 - \cos \theta) \right)^{t}, \quad \theta \in [-\pi,\pi].
\]
The domain $\mathcal{D}(\Delta_{\mathbb{Z}}^{t})$ corresponds in Fourier space to the weighted space
\[
\{ g \in L^2(\mathbb{T}) : (1 - \cos \theta)^{t} g(\theta) \in L^2(\mathbb{T}) \}.
\]

\smallskip\noindent
(1) If $r,s \ge 0$, the Fourier multiplier $(1 - \cos \theta)^t$ is bounded for $t \ge 0$, hence $\mathcal{D}(\Delta_{\mathbb{Z}}^{t}) = \ell^2(\mathbb{Z})$ for such $t$. Therefore the intersection domain is the whole space.

\smallskip\noindent
(2) If $r,s < 0$, the weights $(1 - \cos \theta)^{r}$ and $(1 - \cos \theta)^{s}$ blow up near $\theta = 0$, and integrability determines the domain. The product structure implies
\[
\mathcal{D}(\Delta_{\mathbb{Z}}^{s}) \cap \{ \Delta_{\mathbb{Z}}^{s} f \in \mathcal{D}(\Delta_{\mathbb{Z}}^{r}) \} = \mathcal{D}(\Delta_{\mathbb{Z}}^{r+s}),
\]
and symmetry in $r,s$ yields the claim.

\smallskip\noindent
(3) If $s \ge 0$ and $r \le 0$, boundedness of $(1 - \cos \theta)^{s}$ implies $\mathcal{D}(\Delta_{\mathbb{Z}}^{s}) = \ell^2(\mathbb{Z})$, but $\Delta_{\mathbb{Z}}^{s} f$ must belong to $\mathcal{D}(\Delta_{\mathbb{Z}}^{r})$, which forces $f \in \mathcal{D}(\Delta_{\mathbb{Z}}^{r})$. The strictness of the inclusion for $r < 0$ and $r+s < 0$ follows from explicit counterexamples, e.g. $f(\theta) = (1 - \cos \theta)^{-1/2}$ when $(r,s) = (-1,-1/2)$.
\end{proof}

\begin{remark}
On $\mathcal{H}^{s \to r} \cap \mathcal{H}^{r \to s}$, the fractional powers commute:
\[
\Delta_{\mathbb{Z}}^{r} \Delta_{\mathbb{Z}}^{s} f = \Delta_{\mathbb{Z}}^{s} \Delta_{\mathbb{Z}}^{r} f = \Delta_{\mathbb{Z}}^{r+s} f.
\]
For $r > 0$, the operator $\Delta_{\mathbb{Z}}^{r}$ is a positive, boundedly invertible map from $\ell^2(\mathbb{Z})$ onto $\mathcal{D}(\Delta_{\mathbb{Z}}^{-r})$, with inverse $\Delta_{\mathbb{Z}}^{-r}$.
\end{remark}

\begin{lemma}
Let $r,s\in\mathbb{R}$ and $f\in\mathcal{H}^{s\to r}\cap\mathcal{H}^{r\to s}$.
Assuming the binomial-series representation \eqref{eq:binom-DeltaZ} holds (in the strong sense) on $\ell^2(\mathbb{Z})$,
one has on this intersection domain
\[
\Delta_{\mathbb{Z}}^{\,s}\Delta_{\mathbb{Z}}^{\,r} f
=\Delta_{\mathbb{Z}}^{\,r+s} f
=\Delta_{\mathbb{Z}}^{\,r}\Delta_{\mathbb{Z}}^{\,s} f .
\]
\end{lemma}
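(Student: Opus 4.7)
The cleanest path is via the Fourier-multiplier representation. By \eqref{b-fourier} and Proposition~\ref{prop:all-r}, each $\Delta_{\Z}^{\,t}$ is unitarily equivalent, through the discrete Fourier transform $\mathcal F$, to multiplication on $L^{2}(\T)$ by the nonnegative symbol $\vartheta(\theta)^{\,t}$, where $\vartheta(\theta)=2(1-\cos\theta)$ vanishes only at $\theta=0$. Setting $g:=\mathcal F f$, the hypothesis $f\in\mathcal H^{s\to r}\cap\mathcal H^{r\to s}$ translates to the four integrability conditions $g,\ \vartheta^{s}g,\ \vartheta^{r}g,\ \vartheta^{r}\vartheta^{s}g\in L^{2}(\T)$. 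In particular this places $f$ inside $\mathcal D(\Delta_{\Z}^{\,r+s})$, in agreement with Lemma~\ref{sb-space}.

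From there the proof reduces to a one-line observation: pointwise scalar multiplication is associative and commutative, and the identity $\vartheta(\theta)^{\,r}\,\vartheta(\theta)^{\,s}=\vartheta(\theta)^{\,r+s}$ holds at every $\theta\neq 0$, hence almost everywhere. Applying $\mathcal F^{*}$ therefore yields
\[
\Delta_{\Z}^{\,s}\Delta_{\Z}^{\,r}f
\;=\;\mathcal F^{*}M_{\vartheta^{s}\vartheta^{r}}\mathcal F f
\;=\;\mathcal F^{*}M_{\vartheta^{r+s}}\mathcal F f
\;=\;\Delta_{\Z}^{\,r+s}f,
\]
and exchanging the roles of $r$ and $s$ gives the opposite order. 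If instead one wishes to exploit the binomial series \eqref{eq:binom-DeltaZ} as the statement suggests, one composes the two series first on the dense subspace $\mathcal C_{c}(\Z)$, invokes the Vandermonde--Chu identity for generalized binomial coefficients (equivalently, the convolution identity $a_{r}*a_{s}=a_{r+s}$ for the kernels of Proposition~\ref{prop:all-r}, whose proof goes through Fourier transformation anyway), and then extends by closedness of $\Delta_{\Z}^{\,r+s}$ to the common domain.

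The only real difficulty is bookkeeping of domains: one must ensure that every intermediate vector genuinely lies in $L^{2}$ so the compositions are not merely formal, and that the behaviour of the multipliers at $\theta=0$---where $\vartheta^{r}$ or $\vartheta^{s}$ blows up when the exponent is negative---does not spoil the almost-everywhere identity. Both obstructions are benign: the exceptional singleton $\{0\}$ has Lebesgue measure zero and plays no role in $L^{2}(\T)$, while the intersection $\mathcal H^{s\to r}\cap\mathcal H^{r\to s}$ is precisely engineered to guarantee integrability at every intermediate stage of either composition.
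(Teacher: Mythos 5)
Your primary argument is correct, but it takes a genuinely different route from the paper. You diagonalize everything at once: since each $\Delta_{\Z}^{\,t}=\mathcal F^{*}M_{\vartheta^{t}}\mathcal F$ is a Borel function of the \emph{same} self-adjoint operator $\Delta_{\Z}$, the identity $\vartheta^{r}\vartheta^{s}=\vartheta^{r+s}$ a.e.\ plus the domain bookkeeping encoded in $\mathcal H^{s\to r}\cap\mathcal H^{r\to s}$ immediately gives both equalities. The paper instead works entirely on the series side: it applies \eqref{eq:binom-DeltaZ} with exponent $s$ termwise to the series for $\Delta_{\Z}^{\,r}f$, regroups the resulting double sum by total shift $m=h+k$, and invokes the Chu--Vandermonde identity $\sum_{k=0}^{m}\binom{r}{k}\binom{s}{m-k}=\binom{r+s}{m}$ to recognize the series for $\Delta_{\Z}^{\,r+s}f$ --- exactly the combinatorial alternative you sketch in your second paragraph. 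Your spectral route is arguably more robust (commutativity of functions of a single self-adjoint operator is automatic, and no rearrangement of a conditionally structured double series needs justifying), whereas the paper's route is what the lemma's hypothesis (``assuming the binomial-series representation holds in the strong sense'') is actually pointing at, and it yields the explicit kernel-level identity $a_{r}\ast a_{s}=a_{r+s}$ as a by-product. If you want your write-up to match the stated hypothesis, you should promote your Vandermonde sketch to the main argument and justify the interchange of summation (the coefficients $(-1)^{h+k}2^{r-k}2^{s-h}\binom{r}{k}\binom{s}{h}$ are absolutely summable since $\bigl|\binom{t}{h}\bigr|=O(h^{-t-1})$, so the geometric factors dominate); otherwise your Fourier proof stands on its own as a cleaner, hypothesis-free alternative.
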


\begin{proof}
By  Proposition \ref{f-calculus} with $t=r$ and strong convergence, we may apply $\Delta_{\mathbb{Z}}^{\,s}$ termwise:
\[
\Delta_{\mathbb{Z}}^{\,s}\Delta_{\mathbb{Z}}^{\,r} f
=\sum_{k=0}^{\infty}(-1)^k\,2^{\,r-k}\binom{r}{k}\,\Delta_{\mathbb{Z}}^{\,s}\!\big(U^{k} f\big).
\]
Applying \eqref{eq:binom-DeltaZ} with $t=s$ to each $U^k f$ and using $U^{h}U^{k}=U^{h+k}$ gives
\[
\Delta_{\mathbb{Z}}^{\,s}\Delta_{\mathbb{Z}}^{\,r} f
=\sum_{k=0}^{\infty}\sum_{h=0}^{\infty}
(-1)^{h+k}\,2^{\,r-k}\,2^{\,s-h}\binom{r}{k}\binom{s}{h}\,U^{h+k} f.
\]
The scalar coefficients form an absolutely summable family (hence Fubini-Tonelli applies), so regrouping
by $m=h+k$ yields
\[
\Delta_{\mathbb{Z}}^{\,s}\Delta_{\mathbb{Z}}^{\,r} f
=\sum_{m=0}^{\infty}(-1)^m\,2^{\,r+s-m}
\Bigg(\sum_{k=0}^{m}\binom{r}{k}\binom{s}{m-k}\Bigg) U^{m} f.
\]
By Vandermonde's identity $\displaystyle \sum_{k=0}^{m}\binom{r}{k}\binom{s}{m-k}=\binom{r+s}{m}$,
\[
\Delta_{\mathbb{Z}}^{\,s}\Delta_{\mathbb{Z}}^{\,r} f
=\sum_{m=0}^{\infty}(-1)^m\,2^{\,r+s-m}\binom{r+s}{m}\,U^{m} f
=\Delta_{\mathbb{Z}}^{\,r+s} f,
\]
where the last equality is \eqref{eq:binom-DeltaZ} with $t=r+s$. Interchanging $r$ and $s$ gives the
same conclusion, hence $\Delta_{\mathbb{Z}}^{\,r}$ and $\Delta_{\mathbb{Z}}^{\,s}$ commute on
$\mathcal{H}^{s\to r}\cap\mathcal{H}^{r\to s}$.
\end{proof}

\subsection{Anisotropic Fractional Powers of Translation-Invariant Laplacians on $\mathbb{Z}^d$}

The previous one-dimensional analysis naturally extends to the multidimensional setting via a tensorized construction. In particular, both the domain \( \Dc(\Delta_{\mathbb{Z}^d}^{\vec{\mathbf{r}}}) \) and the associated quadratic form inherit an anisotropic structure, dictated by the one-dimensional components in each coordinate direction.

The results presented in this section adapt classical constructions from the spectral theory of self-adjoint operators on tensor product Hilbert spaces. Using the spectral theorem, we define anisotropic fractional powers of discrete Laplacians on \(\mathbb{Z}^d\) by treating each direction independently.

While the functional calculus for commuting self-adjoint operators, the additive behavior of spectra, and the structure of tensorial domains are well-established (see, e.g.,~\cite{RS,Da,Ic2006}), we tailor these tools to the discrete setting with nonlocal and anisotropic features.

Our goal is to provide an explicit description of the domain, spectral structure, and Sobolev-type regularity associated with the anisotropic operator
\begin{equation}\label{multif}
\Delta_{\mathbb{Z}^d}^{\vec{\mathbf{r}}} :=\sum_{j=1}^d \Delta_{\Z^d,j}^{r_j},
\end{equation}
where $\Delta_{\Z^d,j}^{r_j}:= \mathrm{id}^{\otimes (j-1)} \otimes \Delta^{r_j} \otimes \mathrm{id}^{\otimes (d-j)}$ and each \(\Delta_{\Z}^{r_j}\) denotes the one-dimensional fractional Laplacian of order \(r_j \in \mathbb{R}\) acting on \(\ell^2(\mathbb{Z})\).
Under the Fourier transform $\mathcal F:\ell^2(\Z^d)\to L^2(\T^d)$, $H_{\Z^d}^{\vec{\mathrm r}}$ is the multiplication operator by
\[
\vartheta_{\vec{\mathrm r}}(\theta)\ :=\ \sum_{j=1}^d \bigl(2-2\cos \theta_j\bigr)^{r_j},\qquad \theta=(\theta_1,\dots,\theta_d)\in\T^d=[-\pi,\pi]^d.
\]
We emphasize the distinction between the cases \(r_j > 0\) and \(r_j < 0\), which leads to significantly different domain properties and low-frequency behavior. These distinctions are critical for the Mourre estimate and the limiting absorption principle studied later in the paper.

\noindent\textbf{Fractional discrete Laplacians.} Given \(\vec{\mathbf{r}} = (r_1, \dots, r_d) \in \mathbb{R}^d\), we define the anisotropic fractional Laplacian as in \eqref{multif}. If \(r_j = r\) for all \(j\), we denote the operator simply as \(\Delta^r_{\mathbb{Z}^d}\). See \cite[Section VIII.10]{RS} for background on tensor products of essential self-adjoint operators.

\noindent\textbf{Domain Characterization and Discussion.}

Let us discuss the domain of the operator $\Delta_{\mathbb{Z}^d}^{\vec{\mathbf{r}}}$ when the fractional orders $\vec{\mathbf{r}} = (r_1, \dots,r_d)$ are not all positive.

For all $r_j > 0$, this yields a convolution-type operator of order $2r_j$ with fast off-diagonal decay, and its domain is $\ell^2(\mathbb{Z})$.

However, when $r_j < 0$, the operator $\Delta_{\mathbb{Z}}^{r_j}$ becomes a nonlocal pseudo-inverse of the Laplacian, and its domain is a strictly smaller subspace  of $\ell^2(\mathbb{Z})$, typically a weighted Sobolev-type space. In particular, for $r_j < 0$, we have:
\[
\mathcal{D}(\Delta_{\mathbb{Z}}^{r_j}) \subsetneq \ell^2(\mathbb{Z}).
\]

\begin{proposition}
Let $\vec{\mathbf{r} }\in \mathbb{R}^d$. Then the domain of $\Delta_{\mathbb{Z}^d}^{\vec{\mathbf{r}}}$ is:
\[
\mathcal{D}\left( \Delta_{\mathbb{Z}^d}^{\vec{\mathbf{r}}} \right) = \bigcap_{j=1}^d \left( \mathrm{id}^{\otimes(j-1)} \otimes \mathcal{D}\left( \Delta_{\mathbb{Z}}^{r_j} \right) \otimes \mathrm{id}^{\otimes(d-j)} \right).
\]
Moreover:
\begin{itemize}
    \item If all $r_j \geq 0$, then $\mathcal{D}(\Delta_{\mathbb{Z}^d}^{\vec{\mathbf{r}}}) = \ell^2(\mathbb{Z}^d)$.
    \item If at least one $r_j < 0$, then $\mathcal{D}(\Delta_{\mathbb{Z}^d}^{\vec{\mathbf{r}}}) \subsetneq \ell^2(\mathbb{Z}^d)$ and is characterized by directional regularity constraints.
\end{itemize}
\end{proposition}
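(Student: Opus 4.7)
\bigskip
\noindent\textbf{Proof proposal.}
The plan is to transport everything to the Fourier side, where each one-dimensional building block becomes a concrete multiplication operator on $L^2(\T^d)$. Under $\Fc:\ell^2(\Z^d)\to L^2(\T^d)$, each tensor factor $\Delta_{\Z^d,j}^{r_j}=\mathrm{id}^{\otimes(j-1)}\otimes\Delta_{\Z}^{r_j}\otimes\mathrm{id}^{\otimes(d-j)}$ is unitarily equivalent to multiplication by the nonnegative symbol
\[
m_j(\theta):=\bigl(2-2\cos\theta_j\bigr)^{r_j},\qquad \theta\in\T^d,
\]
by virtue of the one-dimensional computation \eqref{b-fourier} combined with the Borel functional calculus (Proposition~\ref{prop:all-r}). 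Hence the natural domain on the Fourier side is
\[
\Fc\,\Dc(\Delta_{\Z^d,j}^{r_j})=\bigl\{g\in L^2(\T^d):\ m_j\,g\in L^2(\T^d)\bigr\}.
\]
Since finitely supported functions (equivalently, trigonometric polynomials) are a common core and all $m_j$ are jointly measurable nonnegative functions of disjoint coordinates, the $\Delta_{\Z^d,j}^{r_j}$ are pairwise commuting self-adjoint operators; the operator $\Delta_{\Z^d}^{\vec{\mathbf r}}=\sum_j \Delta_{\Z^d,j}^{r_j}$ is then unitarily equivalent to multiplication by $\vartheta_{\vec{\mathbf r}}=\sum_j m_j$.

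Next I would derive the intersection formula by a direct pointwise comparison. Because each $m_j\ge 0$, one has the two-sided inequality
\[
m_j(\theta)\ \le\ \vartheta_{\vec{\mathbf r}}(\theta)\ \le\ \sum_{k=1}^d m_k(\theta),
\]
so $\vartheta_{\vec{\mathbf r}}\,g\in L^2$ if and only if $m_j\,g\in L^2$ for every $j=1,\dots,d$. Translating this equivalence back through $\Fc$ yields
\[
\Dc\bigl(\Delta_{\Z^d}^{\vec{\mathbf r}}\bigr)=\bigcap_{j=1}^d \Dc\bigl(\Delta_{\Z^d,j}^{r_j}\bigr),
\]
which is precisely the stated characterization.

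For the dichotomy, I would separate the two regimes according to the boundedness of the symbols. If $r_j\ge 0$ for all $j$, then $m_j$ is continuous and bounded on the compact torus $\T^d$ (with $0\le m_j\le 4^{r_j}$), so $\vartheta_{\vec{\mathbf r}}$ is bounded, multiplication by it is a bounded operator on $L^2(\T^d)$, and the domain is the whole space. If some index $j_0$ satisfies $r_{j_0}<0$, then $m_{j_0}(\theta)$ is unbounded as $\theta_{j_0}\to 0$; choosing a test function $g(\theta)=\psi(\theta_{j_0})\,\chi(\theta')$ with $\chi\in L^2$ fixed in the remaining coordinates and $\psi$ tailored to fail the integrability of $m_{j_0}^{2}|\psi|^2$ near $0$ (for instance, $\psi(\theta_{j_0})=|\theta_{j_0}|^{-\alpha}\un_{\{|\theta_{j_0}|<1\}}$ with $\alpha$ chosen in the gap $\tfrac12+r_{j_0}<\alpha<\tfrac12$) produces an element of $L^2(\T^d)\setminus \Dc(\Delta_{\Z^d}^{\vec{\mathbf r}})$, proving strictness of the inclusion. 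The remaining directional regularity interpretation then follows by reading off the weights $m_j$ coordinate by coordinate from the intersection formula.

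The main conceptual obstacle is to justify rigorously that $\Dc(\sum_j \Delta_{\Z^d,j}^{r_j})=\bigcap_j\Dc(\Delta_{\Z^d,j}^{r_j})$ when the operators are unbounded; this is where the nonnegativity of the symbols $m_j$ is essential, because without it the sum could have cancellations enlarging the domain. Since the Fourier picture reduces this to a pointwise positivity argument, the difficulty dissolves. The only genuinely case-dependent point is producing the explicit counterexample in the $r_{j_0}<0$ regime, which is a direct calculation on the singular behavior of $(2-2\cos\theta_{j_0})^{r_{j_0}}$ near the origin.
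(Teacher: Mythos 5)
Your proof is correct, and it follows the same basic mechanism as the paper (diagonalization by the discrete Fourier transform, so that everything reduces to domains of multiplication operators), but it is carried out more concretely and actually supplies a step the paper's proof leaves implicit. The paper simply cites the spectral theorem for strongly commuting self-adjoint operators to assert that the domain of the sum is the intersection of the domains; as stated, that is not a general fact (for instance $A$ and $-A$ strongly commute, yet the self-adjoint sum is $0$ on all of $\mathcal H$, strictly larger than $\mathcal D(A)$). What rescues the claim here is precisely the pointwise nonnegativity $m_j\ge 0$ of each symbol, which gives $0\le m_j\le\vartheta_{\vec{\mathbf r}}$ and hence the two-way implication $\vartheta_{\vec{\mathbf r}}\,g\in L^2 \Leftrightarrow m_j\,g\in L^2$ for all $j$; you identify and use exactly this point, so your argument is, if anything, more complete than the paper's. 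Your explicit counterexample for strictness when some $r_{j_0}<0$ also checks out: with $\psi(\theta_{j_0})=|\theta_{j_0}|^{-\alpha}\un_{\{|\theta_{j_0}|<1\}}$ and $\tfrac12+r_{j_0}<\alpha<\tfrac12$ one has $\psi\in L^2$ while $m_{j_0}\psi\sim|\theta_{j_0}|^{2r_{j_0}-\alpha}\notin L^2$ near $0$ (the sharp threshold is $\alpha>2r_{j_0}+\tfrac12$, so your window is sufficient though not optimal), and positivity of the remaining $m_k$ prevents any cancellation in $\vartheta_{\vec{\mathbf r}}\,g$. The paper instead delegates the one-dimensional domain description to its Lemma on $\mathcal H^{s\to r}$ spaces; both routes are legitimate, but yours is self-contained.
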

\begin{proof}
By \eqref{multif} and the summands act on different coordinates, hence commute strongly.
It follows from the spectral theorem for strongly commuting self-adjoint operators
(see e.g.\ \cite[Thm.~5.29]{Schmudgen2012})
that
\[
\mathcal{D}\!\left(\Delta_{\mathbb{Z}^d}^{\vec{\mathrm{r}}}\right)
=\bigcap_{j=1}^d \Bigl(\mathrm{id}^{\otimes(j-1)} \otimes \mathcal{D}(\Delta_{\mathbb{Z}}^{r_j}) \otimes \mathrm{id}^{\otimes(d-j)}\Bigr).
\]

The description of each factor $\mathcal{D}(\Delta_{\mathbb{Z}}^{r_j})$
is given by Lemma~\ref{sb-space}.
If all $r_j\ge0$, then $\mathcal{D}(\Delta_{\mathbb{Z}}^{r_j})=\ell^2(\mathbb{Z})$ for each $j$,
hence the full domain is $\ell^2(\mathbb{Z}^d)$.
If at least one $r_j<0$, then the corresponding one-dimensional domain is strictly contained in $\ell^2(\mathbb{Z})$,
which yields the strict inclusion in the multidimensional case.
\end{proof}

\begin{proposition}\label{ESA}
Let $\vec{\mathbf{r}} \in \mathbb{R}^d$. Then the operator $\Delta_{\mathbb{Z}^d}^{\vec{\mathbf{r}}}$ is essentially self-adjoint on $\ell^2_c(\mathbb{Z}^d)$.
\end{proposition}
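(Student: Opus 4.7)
The plan is to exploit the tensor-sum structure and reduce the claim to a one-dimensional essential-self-adjointness statement via the classical tensor-product theorem. Since
\[
\Delta_{\mathbb{Z}^d}^{\vec{\mathbf{r}}} \;=\; \sum_{j=1}^d \mathrm{id}^{\otimes(j-1)} \otimes \Delta_{\mathbb{Z}}^{\,r_j} \otimes \mathrm{id}^{\otimes(d-j)},
\]
the $d$ summands are strongly commuting self-adjoint operators living in distinct tensor factors (each is a Fourier multiplier in a single coordinate $\theta_j$), and the algebraic tensor product $\ell^2_c(\mathbb{Z})^{\odot d}$ coincides with $\ell^2_c(\mathbb{Z}^d)$ under the canonical identification $\ell^2(\mathbb{Z}^d) \cong \ell^2(\mathbb{Z})^{\otimes d}$. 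The abstract tensor-sum theorem of Reed--Simon (Vol.~I, Thm.~VIII.33, applied iteratively over $j$) then reduces the claim to the one-dimensional statement: each $\Delta_{\mathbb{Z}}^{\,r_j}$ is essentially self-adjoint on $\ell^2_c(\mathbb{Z})$.

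For the one-dimensional statement, I would use the Fourier diagonalization of Proposition~\ref{prop:all-r} to transfer the problem to the multiplication operator $M_{\vartheta^{\,r}}$ on $L^2(\mathbb{T})$, where $\vartheta(\theta) = 2(1-\cos\theta)$. Multiplication by a real measurable function is self-adjoint on its maximal domain, so the question becomes whether trigonometric polynomials (the Fourier image of $\ell^2_c(\mathbb{Z})$) form a core. When $r \geq 0$ the multiplier is bounded by $4^{\,r}$, so $M_{\vartheta^{\,r}}$ is bounded and everywhere defined, and any dense subspace is trivially a core. When $r < 0$, I would use a two-step approximation: given $g$ in $\mathcal{D}(M_{\vartheta^{\,r}})$, first spectrally truncate via $g_n := \mathbf{1}_{\{\vartheta^{\,r} \leq n\}}\, g$, so that $g_n \to g$ and $\vartheta^{\,r}\,g_n \to \vartheta^{\,r}\,g$ in $L^2$ by dominated convergence; then approximate each $g_n$ in graph norm by trigonometric polynomials, exploiting that $\vartheta^{\,r}$ is smooth away from $\theta=0$ and that $g_n$ is compactly supported away from the singularity.

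The main obstacle I anticipate is the case $r \leq -\tfrac{1}{2}$, where $\vartheta^{\,r} \notin L^2(\mathbb{T})$, so a generic trigonometric polynomial (for instance the constant $\tfrac{1}{\sqrt{2\pi}} = \mathcal{F}\delta_0$) does not even belong to $\mathcal{D}(M_{\vartheta^{\,r}})$. The natural interpretation is then to take as core the intersection $\ell^2_c(\mathbb{Z}) \cap \mathcal{D}(\Delta_{\mathbb{Z}}^{\,r})$, i.e.\ finitely-supported sequences whose Fourier transforms vanish to sufficient order at $\theta = 0$. Graph-norm density of this subspace can be established by explicit construction of test sequences obtained as iterated discrete differences $(\delta_{n} - \delta_{n-1})^{*k}$, whose Fourier transforms vanish like $\theta^k$ at the origin and hence lie in the domain for $k$ large enough. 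Once this density is in hand, the spectral-cutoff-plus-polynomial-approximation argument applies, Step~1 is complete, and the tensor-product reduction yields the stated essential self-adjointness on $\ell^2_c(\mathbb{Z}^d)$.
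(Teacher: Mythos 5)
Your proposal follows the same route as the paper's (two-line) proof: reduce, via the tensor--sum theorem for strongly commuting factors, to essential self-adjointness of each one-dimensional operator $\Delta_{\mathbb{Z}}^{\,r_j}$ on finitely supported sequences. Where you add genuine content is the case $r_j<0$, which the paper never actually treats, and there your caution is justified: for $r$ negative enough one has $\ell^2_c(\mathbb{Z})\not\subset\mathcal{D}(\Delta_{\mathbb{Z}}^{\,r})$, so the proposition as stated only makes sense after replacing the core by $\ell^2_c(\mathbb{Z})\cap\mathcal{D}(\Delta_{\mathbb{Z}}^{\,r})$, exactly as you propose; this is a real imprecision in the statement that the paper's proof glosses over. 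Two remarks on your execution. First, your threshold is off: since $\vartheta(\theta)^r\sim|\theta|^{2r}$ near $\theta=0$, one has $\vartheta^r\in L^2(\mathbb{T})$ if and only if $4r>-1$, so $\delta_0$ already leaves the domain for every $r\le-\tfrac14$, not merely $r\le-\tfrac12$ (you are conflating the $L^1$ condition $2r>-1$ governing the kernel integral for $a_r(k)$ with the $L^2$ condition relevant for the operator domain). Second, your two-step approximation (spectral cutoff, then graph-norm approximation by polynomials vanishing at the origin) is workable but delicate, since the graph-norm error near $\theta=0$ carries the unbounded weight $\vartheta^r$ and the polynomial approximants must be controlled there uniformly; a cleaner finish is the range/deficiency criterion: if $h\in L^2(\mathbb{T})$ is orthogonal to $(\vartheta^r+\mathrm{i})\,(1-e^{\mathrm{i}\theta})^k q$ for all trigonometric polynomials $q$, then $g:=\overline{h}\,(\vartheta^r+\mathrm{i})(1-e^{\mathrm{i}\theta})^k$ belongs to $L^1(\mathbb{T})$ once $k>-2r-\tfrac12$ and has vanishing Fourier coefficients, hence $g=0$ a.e.\ and $h=0$; this shows directly that the trigonometric polynomials vanishing to order $k$ at $\theta=0$ (the Fourier image of your iterated differences) form a core, after which your tensorization step goes through with $\bigodot_j\bigl(\ell^2_c(\mathbb{Z})\cap\mathcal{D}(\Delta_{\mathbb{Z}}^{\,r_j})\bigr)$ in place of $\ell^2_c(\mathbb{Z}^d)$.
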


\begin{proof}
Each component $\Delta_{\mathbb{Z}}^{r_j}$ is essential self-adjoint as shown in the one-dimensional case. The sum of commuting self-adjoint operators on independent tensor components is again self-adjoint, \cite{Da}.
\end{proof}

\begin{proposition}\label{prop:spect}
The spectrum of $\Delta_{\mathbb{Z}^d}^{\vec{\mathbf{r}}}$ is given by:
\[
\sigma\left( \Delta_{\mathbb{Z}^d}^{\vec{\mathbf{r}}} \right) = \left\{ \sum_{j=1}^d \lambda_j \,\middle|\, \lambda_j \in \sigma\left( \Delta_{\mathbb{Z},}^{r_j} \right) \right\}.
\]
In particular, if $r_j > 0$ for all $j$, then:
\[
\sigma\left( \Delta_{\mathbb{Z}^d}^{\vec{\mathbf{r}}} \right) = \left[ 0, \sum_{j=1}^d 2^{2r_j} \right].
\]
\end{proposition}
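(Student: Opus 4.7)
The approach is to diagonalize the operator via the discrete Fourier transform and identify its spectrum with the essential range of the resulting multiplier, which in turn decomposes into the Minkowski sum of the one-dimensional spectra.

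First, I would apply the $d$-dimensional Fourier transform $\mathcal F:\ell^2(\Z^d)\to L^2(\T^d)$. Each summand $\Delta_{\Z^d,j}^{r_j}=\mathrm{id}^{\otimes(j-1)}\otimes\Delta_{\Z}^{\,r_j}\otimes\mathrm{id}^{\otimes(d-j)}$ becomes, by the one-dimensional identity in Proposition~\ref{prop:all-r} applied fiberwise, multiplication on $L^2(\T^d)$ by the symbol $f_j(\theta):=(2(1-\cos\theta_j))^{r_j}$, a function depending only on the coordinate $\theta_j$. Since these $f_j$ are real-valued and mutually commuting multiplication operators, their sum is well defined as a (possibly unbounded) self-adjoint multiplication operator, and $\Delta_{\Z^d}^{\vec{\mathrm r}}$ is unitarily equivalent to multiplication by $\vartheta_{\vec{\mathrm r}}(\theta)=\sum_{j=1}^d f_j(\theta)$.

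Next, I would invoke the fact that the spectrum of a multiplication operator by a Borel function on a $\sigma$-finite measure space coincides with the essential range of that function. Using the product structure of Lebesgue measure on $\T^d=\T\times\cdots\times\T$ and the fact that each $f_j$ depends only on the $j$-th coordinate, a Fubini-type argument shows that the essential range of $\vartheta_{\vec{\mathrm r}}$ equals the closure of the Minkowski sum of the essential ranges of the $f_j$. By Proposition~\ref{prop:spec-all-r}, the essential range of $f_j$ is exactly $\sigma(\Delta_{\Z}^{r_j})$, which is either $[0,4^{r_j}]$ (if $r_j\ge 0$) or $[4^{r_j},\infty)$ (if $r_j<0$). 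In either case each one-dimensional spectrum is a closed interval, so its Minkowski sum over $j$ is automatically closed, and no extra closure is needed. This yields the Minkowski-sum formula $\sigma(\Delta_{\Z^d}^{\vec{\mathrm r}})=\{\sum_j\lambda_j:\lambda_j\in\sigma(\Delta_{\Z}^{r_j})\}$.

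For the second assertion, when every $r_j>0$ the symbol $\vartheta_{\vec{\mathrm r}}$ is continuous on the connected compact set $\T^d$, attains its minimum $0$ at $\theta=0$ and its maximum $\sum_{j=1}^d 4^{r_j}=\sum_{j=1}^d 2^{2r_j}$ at $\theta=(\pi,\dots,\pi)$; by the intermediate value theorem the image is the full interval $[0,\sum_j 2^{2r_j}]$, giving the stated formula.

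The main conceptual point to justify carefully is the identification of the essential range of the sum $\vartheta_{\vec{\mathrm r}}$ with the Minkowski sum of the one-dimensional essential ranges; the alternative (and arguably cleaner) route is to use the joint spectral measure of the strongly commuting family $\{\Delta_{\Z^d,j}^{r_j}\}_{j=1}^d$, which by the tensor structure has support $\prod_j\sigma(\Delta_{\Z}^{r_j})$, so that the functional calculus applied to the coordinate sum $(\lambda_1,\dots,\lambda_d)\mapsto\sum_j\lambda_j$ gives the result directly; this avoids any Fubini subtlety and is the step I expect to be the only non-trivial input.
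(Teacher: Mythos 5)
Your proof is correct, and it is somewhat more explicit than the paper's. The paper disposes of this proposition in three lines: it observes that the summands act on different tensor factors, commute, and then cites the spectral theorem for commuting self-adjoint operators (with references to \cite{Da,Da1}) to get the Minkowski-sum formula. Your primary route---conjugating by the $d$-dimensional Fourier transform, identifying $\Delta_{\Z^d}^{\vec{\mathrm r}}$ with multiplication by $\vartheta_{\vec{\mathrm r}}(\theta)=\sum_j(2(1-\cos\theta_j))^{r_j}$, and computing the essential range via the product structure of the measure---is the concrete realization of that abstract argument (the Fourier transform \emph{is} the joint diagonalization here), so the two are morally the same; your closing remark about the joint spectral measure of the strongly commuting family is precisely the paper's argument. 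What your write-up adds is the one point the paper glosses over: both the essential-range computation and the joint-spectral-measure route a priori give the \emph{closure} of the sumset, and you correctly note that no closure is needed because each $\sigma(\Delta_{\Z}^{r_j})$ is a closed interval (compact for $r_j\ge 0$, a half-line $[4^{r_j},\infty)$ for $r_j<0$) and the Minkowski sum of such intervals is automatically closed. Your intermediate-value argument for the special case $r_j>0$ and the identification $4^{r_j}=2^{2r_j}$ are also fine.
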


\begin{proof}
Each \(\Delta^{r_j}_{\mathbb{Z}}\) acts only on the \(j\)-th coordinate and is self-adjoint. The operators \((\Delta^{r_j}_{\mathbb{Z}})_{j=1}^d\) commute and act on tensor components. The spectral theorem for commuting self-adjoint operators gives the result (see \cite{Da,Da1}).
\end{proof}
\begin{definition}\label{2.11}
The (finite) threshold set of $H_{\Z^d}^{\vec{\mathrm r}}$ is
\[
\Thr_{\mathrm{fin}}\!\big(H_{\Z^d}^{\vec{\mathrm r}}\big)
:=\bigl\{\,\vartheta_{\vec{\mathrm r}}(\theta):\ \theta\in\T^d,\ \nabla \vartheta_{\vec{\mathrm r}}(\theta)=0,\ \vartheta_{\vec{\mathrm r}}(\theta)<\infty\,\bigr\}.
\]
\end{definition}

\begin{proposition}\label{prop:thresholds-mixed}
With the notation of Definition~\ref{2.11},
\[
\Thr_{\mathrm{fin}}\!\big(H_{\Z^d}^{\vec{\mathrm r}}\big)
=\Bigl\{\, \sum_{j\in N}4^{\,r_j}\;+\;\sum_{j\in P}\epsilon_j\,4^{\,r_j}\ :\ \epsilon_j\in\{0,1\}\ \Bigr\},
\]
where $P:=\{j:\ r_j>0\}$ and $N:=\{j:\ r_j<0\}$.
\end{proposition}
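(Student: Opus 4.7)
The plan is to exploit the additive separability of the symbol
$\vartheta_{\vec{\mathrm r}}(\theta)=\sum_{j=1}^d\varphi_j(\theta_j)$
with $\varphi_j(\theta_j):=(2-2\cos\theta_j)^{r_j}$, which forces the critical-point equation to decouple across coordinates. Concretely, since $\partial_{\theta_k}\vartheta_{\vec{\mathrm r}}(\theta)=\varphi_k'(\theta_k)$ depends only on $\theta_k$, the condition $\nabla\vartheta_{\vec{\mathrm r}}(\theta)=0$ is equivalent to the $d$ independent conditions $\varphi_j'(\theta_j)=0$. Likewise, $\vartheta_{\vec{\mathrm r}}(\theta)<\infty$ is equivalent to $\varphi_j(\theta_j)<\infty$ for every $j$, which excludes $\theta_j=0$ precisely when $j\in N$. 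Hence the finite critical set of $\vartheta_{\vec{\mathrm r}}$ is the Cartesian product of the finite critical sets of the one-variable $\varphi_j$'s, and the finite critical values are sums of the corresponding one-variable critical values.

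Next I would analyse each $\varphi_j$ directly. A direct computation gives
$\varphi_j'(\theta_j)=2r_j\sin(\theta_j)\,(2-2\cos\theta_j)^{r_j-1}$,
so the zeros of $\varphi_j'$ on its finite domain occur when $\sin\theta_j=0$, i.e.\ at $\theta_j\in\{0,\pi\}$. For $j\in N$ the point $\theta_j=0$ is forbidden by finiteness, leaving only $\theta_j=\pi$ with value $\varphi_j(\pi)=4^{\,r_j}$. For $j\in P$ both $\theta_j=0$ (value $0$) and $\theta_j=\pi$ (value $4^{\,r_j}$) are admissible, which is exactly the binary choice $\epsilon_j\in\{0,1\}$ in the statement. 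Assembling these one-variable selections across coordinates yields
$$\Thr_{\mathrm{fin}}\!\bigl(H_{\Z^d}^{\vec{\mathrm r}}\bigr)=\Bigl\{\sum_{j\in N}4^{\,r_j}+\sum_{j\in P}\epsilon_j\,4^{\,r_j}:\ \epsilon_j\in\{0,1\}\Bigr\},$$
which is the claimed formula.

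The main technical subtlety, and the step I would treat most carefully, is the interpretation of the critical-point condition at $\theta_j=0$ when $j\in P$ and $r_j\in(0,1)$: in that regime $\varphi_j$ has a cusp at the origin, and $\varphi_j'(\theta_j)$ tends to $0$ as $\theta_j\to 0$ only when $r_j>\tfrac12$; for $r_j\in(0,\tfrac12]$ the derivative blows up there. I would resolve this by interpreting $\theta_j=0$ as a critical point in the sense of a global minimum of $\varphi_j$ on $\T$, consistent with the band-function picture that underlies Mourre-type thresholds (points where the density of states degenerates). For $r_j\ge 1$ the statement is the standard smooth Morse-type computation, and the argument above then covers all orders $r_j\ne 0$ in a uniform way once this convention is fixed.
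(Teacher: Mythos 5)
Your proof is correct and follows essentially the same route as the paper's: compute $\partial_{\theta_j}\vartheta_{\vec{\mathrm r}}=2r_j\sin\theta_j\,(2-2\cos\theta_j)^{r_j-1}$, observe that the critical-point condition decouples coordinate by coordinate with solutions $\theta_j\in\{0,\pi\}$, discard $\theta_j=0$ for $j\in N$ by the finiteness requirement, and sum the resulting one-variable critical values. Your closing remark about the cusp of $(2-2\cos\theta_j)^{r_j}$ at $\theta_j=0$ when $r_j\in(0,\tfrac12]$ --- where the classical gradient does not vanish (indeed $\varphi_j'\sim 2r_j\,\mathrm{sign}(\theta_j)|\theta_j|^{2r_j-1}$ blows up or tends to $\pm 2r_j\neq 0$) and Definition~\ref{2.11} must be read as including the band-edge minimum --- flags a genuine imprecision that the paper's proof passes over in silence, so your version is in fact the more careful one.
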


\begin{proof}
Compute the gradient:
\[
\partial_{\theta_j} \vartheta_{\vec{\mathrm r}}(\theta)=2\,r_j\,\sin \theta_j\,\bigl(2-2\cos \theta_j\bigr)^{r_j-1}.
\]
Thus $\nabla \vartheta_{\vec{\mathrm r}}(\theta)=0$ if and only if each $\theta_j\in\{0,\pi\}$. Evaluating $\vartheta_{\vec{\mathrm r}}$ at such points gives:
if $j\in P$, then $\theta_j=0$ contributes $0$ and $\theta_j=\pi$ contributes $4^{r_j}$; if $j\in N$, the finite critical value occurs only at $\theta_j=\pi$ (since $\theta_j=0$ gives $+\infty$), contributing $4^{r_j}$. Summing over coordinates yields the stated set.
\end{proof}

\begin{remark}
For $H=(\Delta_{\Z^d})^{r}$ with $r>0$ one has $\sigma(H)=[0,(4d)^r]$ and
\[
\Thr_{\mathrm{fin}}(H)=\{(4m)^r:\ m=0,1,\dots,d\}.
\]
For $r<0$, $\sigma(H)=[(4d)^r,\,+\infty)$ and
\[
\Thr_{\mathrm{fin}}(H)=\{(4m)^r:\ m=1,\dots,d\},
\]
(the value $m=0$ corresponds to $+\infty$ and is excluded from the finite set).
\end{remark}
\noindent\textbf{Poles of the symbol on $\mathbb{Z}^d$.} We define the \emph{polar set}
\[
\Sigma:=\bigcup_{j\in N}\{\theta\in\mathbb{T}^d:\ \theta_j=0\}.
\]
We recall: $P:=\{j:\ r_j>0\}$ and $N:=\{j:\ r_j<0\}$.
\begin{proposition}\label{prop:poles-Zd}
The symbol $\vartheta_{\vec{\mathbf{r}}}$ is real-valued and $C^\infty$ on $\mathbb{T}^d\setminus\Sigma$.
Moreover, for each $j\in N$ there exists $\delta>0$ such that, uniformly in $\vartheta_\perp=(\theta_1,\dots,\theta_{j-1},\theta_{j+1},\dots,\theta_d)$,
\[
\bigl(2-2\cos \theta_j\bigr)^{r_j}
=\ |\theta_j|^{2r_j}\,\bigl(1+\mathcal{O}(\theta_j^2)\bigr),
\qquad \theta_j\to0,\ |\theta_j|<\delta,
\]
and hence
\[
\vartheta_{\vec{\mathbf{r}}}(\theta)
=\ |\theta_j|^{2r_j}\,\bigl(1+\mathcal{O}(\theta_j^2)\bigr)\ +\ C(\theta_\perp),
\quad C(\theta_\perp):=\sum_{i\neq j}\bigl(2-2\cos \theta_i\bigr)^{r_i}.
\]
In particular, along the $j$-th coordinate the symbol has an algebraic pole of order
\[
\alpha_j\;=\;-2\,r_j\;>\;0,
\]
i.e.\ $\vartheta_{\vec{\mathbf{r}}}(\theta)\sim c_j\,|\theta_j|^{-\alpha_j}$ as $\theta_j\to0$ (with $c_j>0$), while the remaining directions contribute a bounded offset $C(\theta_\perp)$.
If several indices in $N$ vanish simultaneously, the singularity is the \emph{anisotropic sum}
$\sum_{j\in N} c_j\,|\theta_j|^{-2|r_j|}$.
\end{proposition}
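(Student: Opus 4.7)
The plan is to analyze the symbol summand by summand, exploiting its separable structure $\vartheta_{\vec{\mathbf{r}}}(\theta)=\sum_{j=1}^d\phi_{r_j}(\theta_j)$ with $\phi_r(t):=(2-2\cos t)^r$, and to read off the behavior near $\Sigma$ from a one-variable Taylor expansion at $t=0$.

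First, I would establish smoothness on $\mathbb{T}^d\setminus\Sigma$. The base function $t\mapsto 2-2\cos t=4\sin^2(t/2)$ is real analytic on $\mathbb{T}$ and strictly positive on $\mathbb{T}\setminus\{0\}$. On the open set where $\theta_j\neq 0$ for every $j\in N$, each summand $\phi_{r_j}(\theta_j)$ with $j\in N$ is the composition of a real analytic function with the $C^\infty$ power map $s\mapsto s^{r_j}$ on $(0,\infty)$, hence $C^\infty$; the summands for $j\in P$ are bounded and real analytic on this set. Real-valuedness follows from $2-2\cos\theta_j\ge 0$ and the use of the real principal branch of the power.

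For the asymptotics near a single polar direction $j\in N$, I would write $2-2\cos\theta_j=\theta_j^2\bigl(1+R(\theta_j)\bigr)$ with $R(\theta_j)=-\theta_j^2/12+\mathcal{O}(\theta_j^4)$ real analytic and vanishing at $0$. Choosing $\delta>0$ so that $|R(\theta_j)|<\tfrac{1}{2}$ for $|\theta_j|<\delta$, the generalized binomial series yields
\[
\phi_{r_j}(\theta_j)=(\theta_j^2)^{r_j}\bigl(1+R(\theta_j)\bigr)^{r_j}
=|\theta_j|^{2r_j}\bigl(1+r_j R(\theta_j)+\mathcal{O}(R(\theta_j)^2)\bigr)
=|\theta_j|^{2r_j}\bigl(1+\mathcal{O}(\theta_j^2)\bigr),
\]
with a remainder uniform in $\theta_\perp$ because $R$ depends only on $\theta_j$. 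Since the contributions from indices $i\neq j$ depend only on $\theta_\perp$ and remain finite on this slice (away from the other polar directions), they assemble into the offset $C(\theta_\perp)=\sum_{i\neq j}(2-2\cos\theta_i)^{r_i}$. The order of the pole along the $j$-axis is thus $\alpha_j=-2r_j>0$, with a positive leading coefficient $c_j$ determined by the normalization $(\theta_j^2)^{r_j}=|\theta_j|^{2r_j}$.

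If several coordinates in $N$ vanish simultaneously, the one-variable expansions simply add by separability, producing the claimed anisotropic leading singularity $\sum_{j\in N}c_j|\theta_j|^{-2|r_j|}$ up to smooth multiplicative corrections of the form $1+\mathcal{O}(\theta_j^2)$ in each direction. The entire argument is essentially a scalar Taylor expansion; the only point requiring a line of care is the uniformity of the remainder in $\theta_\perp$, which is automatic from the separable form of $\vartheta_{\vec{\mathbf{r}}}$. I therefore expect no substantive obstacle beyond bookkeeping of the error terms.
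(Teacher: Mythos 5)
Your argument is correct and follows essentially the same route as the paper: a Taylor expansion $2-2\cos\theta_j=\theta_j^2\bigl(1+\mathcal{O}(\theta_j^2)\bigr)$ combined with the binomial expansion of the real power $s\mapsto s^{r_j}$, with smoothness off $\Sigma$ obtained from positivity of $2-2\cos\theta_j$ away from $\theta_j=0$. Your write-up is in fact slightly more explicit than the paper's on the choice of $\delta$ and the uniformity in $\theta_\perp$, but there is no substantive difference in method.
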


\begin{proof}
Taylor expansion gives $2-2\cos \theta_j=\theta_j^2+\mathcal{O}(\theta_j^4)$ as $\theta_j\to0$.
For $r_j\in\mathbb{R}$, the binomial expansion yields
$(x+\mathcal{O}(x^2))^{r_j}=x^{r_j}(1+\mathcal{O}(x))$ as $x\to0^+$. Taking $x=\theta_j^2$ proves
\(\ (2-2\cos \theta_j)^{r_j}=|\theta_j|^{2r_j}(1+\mathcal{O}(\theta_j^2))\ \).
Smoothness on $\mathbb{T}^d\setminus\Sigma$ follows since $2-2\cos \theta_j>0$ away from $\theta_j=0$; the stated decomposition of $\vartheta_{\vec{\mathbf{r}}}$ is immediate, and the algebraic order is $\alpha_j=-2r_j>0$ for $r_j<0$.
\end{proof}

\begin{corollary}
For $j\in P$ (i.e.\ $r_j>0$) one has $(2-2\cos k_j)^{r_j}\in[0,4^{r_j}]$ and in particular no singularity at $\theta_j=0$ or $\theta_j=\pi$.
Thus all poles of $h_{\vec{\mathbf{r}}}$ on $\mathbb{T}^d$ are exactly those described in Proposition~\ref{prop:poles-Zd}, located on $\Sigma$ and of orders $\alpha_j=-2r_j$ along each coordinate with $r_j<0$.
\end{corollary}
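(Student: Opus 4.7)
The plan is to reduce the corollary to the elementary observation that $x\mapsto x^{r_j}$ behaves qualitatively differently on $[0,4]$ depending on the sign of $r_j$, and then to combine the per-coordinate analyses to localize the singular set of $\vartheta_{\vec{\mathbf r}}$. The argument is short and purely algebraic once Proposition~\ref{prop:poles-Zd} is in hand; I do not anticipate any real difficulty.

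First, for each $j\in P$ (so $r_j>0$), I would use the identity $2-2\cos\theta_j=4\sin^2(\theta_j/2)$ to observe that this quantity takes values in the compact interval $[0,4]$ for $\theta_j\in[-\pi,\pi]$, with minimum $0$ attained only at $\theta_j=0$ and maximum $4$ attained at $\theta_j=\pm\pi$. Since $x\mapsto x^{r_j}$ is continuous and monotone on $[0,4]$ for $r_j>0$, the factor $(2-2\cos\theta_j)^{r_j}$ is continuous on all of $\mathbb{T}$ and has range exactly $[0,4^{r_j}]$. In particular, neither $\theta_j=0$ nor $\theta_j=\pi$ produces a singularity, and the factor is smooth away from $\theta_j=0$ (and, for non-integer $r_j$, only $C^0$ at $\theta_j=0$, but still bounded).

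Second, for $j\in N$ (so $r_j<0$), the same identity shows that $(2-2\cos\theta_j)^{r_j}$ is smooth on $\mathbb{T}\setminus\{0\}$ and blows up precisely as $\theta_j\to 0$ at the algebraic rate $|\theta_j|^{2r_j}$ quantified in Proposition~\ref{prop:poles-Zd}. Assembling the two analyses in the sum $\vartheta_{\vec{\mathbf r}}(\theta)=\sum_{j\in P}(2-2\cos\theta_j)^{r_j}+\sum_{j\in N}(2-2\cos\theta_j)^{r_j}$, every $P$-summand is globally bounded on $\mathbb{T}^d$, while an $N$-summand contributes a singularity only on the hyperplane $\{\theta_j=0\}$. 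Hence the singular set of $\vartheta_{\vec{\mathbf r}}$ is contained in $\bigcup_{j\in N}\{\theta_j=0\}=\Sigma$; conversely, since the $N$-summands are non-negative and unbounded as $\theta_j\to 0$, they cannot be cancelled by the bounded $P$-summands, so the inclusion is an equality. The blow-up order along the $j$-th coordinate is then read off directly from Proposition~\ref{prop:poles-Zd} as $\alpha_j=-2r_j>0$.

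The main (minor) point to handle carefully is the last sentence of the preceding paragraph: ruling out that the $P$-contribution could in principle cancel the $N$-contribution. This is immediate from non-negativity of each summand, so no obstacle arises. Everything else is a direct invocation of Proposition~\ref{prop:poles-Zd} and the definitions of $P$, $N$, and $\Sigma$.
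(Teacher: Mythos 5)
Your proof is correct and follows exactly the route the paper intends: the corollary is stated without a separate proof, being an immediate consequence of Proposition~\ref{prop:poles-Zd} together with the boundedness of the factors $(2-2\cos\theta_j)^{r_j}\in[0,4^{r_j}]$ for $j\in P$ and the non-negativity of all summands (which, as you rightly note, rules out cancellation of the blow-up coming from the $N$-directions).
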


\begin{remark}
For the isotropic model $H=(\Delta_{\mathbb{Z}^d})^{r}$ with $r<0$ one has
$\vartheta_r(\theta)=\bigl(\sum_{j=1}^d(2-2\cos \theta_j)\bigr)^{r}$.
Near $\theta=0$, $\sum_{j}(2-2\cos \theta_j)=|\theta|^2+\mathcal{O}(|\theta|^4)$, hence
\[
\vartheta_r(\theta)=|\theta|^{2r}\,(1+\mathcal{O}(|\theta|^2)),\qquad \theta\to0,
\]
i.e.\ a \emph{single isotropic pole} at $\theta=0$ of order $-2r>0$.
There are no other poles at finite points on $\mathbb{T}^d$.
\end{remark}

\noindent\textbf{Quadratic form associated to the fractional Laplacian.} Let \(\vec{r} \in \mathbb{R}^d\). The fractional Laplacian \(\Delta^{\vec{r}}_{\mathbb{Z}^d}\) is diagonalized by the discrete Fourier transform
\[
\hat{\psi}(\theta) := \sum_{n \in \mathbb{Z}^d} \psi(n)\, e^{-i n\cdot \theta}, \quad \theta \in [-\pi, \pi]^d.
\]
Under this transform,
\[
\widehat{\big(\Delta_{\mathbb{Z}^d}^{\vec{\mathbf{r}}}\psi\big)}(\theta)
= \Big(\sum_{j=1}^d \lambda_j(\theta)^{r_j}\Big)\,\hat{\psi}(\theta),
\quad \lambda_j(\theta) := 2 - 2\cos \theta_j.
\]

The associated closed quadratic form is
\[
q_{\vec{r}}[\psi] = \langle \Delta^{\vec{\mathbf{r}}} \psi, \psi \rangle = \int_{[-\pi,\pi]^d} \left( \sum_{j=1}^d \lambda_j(\theta)^{r_j} \right) |\hat{\psi}(\theta)|^2\, \frac{d\theta}{(2\pi)^d}.
\]

Near \(\theta=0\), the asymptotic behavior \(\lambda_j(\theta) \sim \theta_j^2\) implies that \(\lambda_j(\theta)^{r_j} \sim |\theta_j|^{2r_j}\), which constrains the decay of \(\hat{\psi}(\theta)\) when some \(r_j < 0\).

\noindent\textbf{Anisotropic Sobolev spaces.} Define
\[
H^{\vec{\mathbf{r}}}(\mathbb{T}^d)
:= \left\{ f \in L^2(\mathbb{T}^d) \;\middle|\; \int_{\mathbb{T}^d} \left( 1 + \sum_{j=1}^d |\theta_j|^{2r_j} \right) |\hat{f}(\theta)|^2\,d\theta < \infty \right\}.
\]
Then the form domain of \(\Delta_{\mathbb{Z}^d}^{\vec{\mathbf{r}}}\) satisfies
\[
\mathcal{Q}(q_{\vec{\mathbf{r}}}) \simeq \mathrm{H}^{\vec{r}}(\mathbb{T}^d),
\]
up to identification via the discrete Fourier transform.
\noindent\textbf{Anisotropic Sobolev scale (case $r_j>0$).}
Via Fourier, the form domain above is equivalent to the weighted space
\[
H^{\vec{\mathrm{r}}}(\T^d)
:=\Big\{f\in L^2(\T^d): \int_{\T^d}\Big(1+\sum_{j=1}^d \lambda_j(\theta)^{r_j}\Big)|\hat f(\theta)|^2\,d\theta<\infty\Big\}.
\]
For directions with $r_j<0$ we keep the domain description through $\Dc(\Delta_{\Z}^{r_j})$ and the intersection rule.

\subsection{Conjugate operator and commutator estimates }

In this section, we establish a Mourre estimate for the operator \( \Delta_{\Z^d}^{\vec{\mathbf{r}}} \).
Since the fractional powers \( \Delta^{r_j}_{\mathbb{Z}} \) may be unbounded.
(precisely, when some \( r_j < 0 \)), the commutator \( [\Delta_{\Z^d}^{\vec{\mathbf{r}}}, \mathrm{i}A_{\mathbb{Z}^d,\vec{\mathbf{r}}}] \), where $A_{\mathbb{Z}^d,\vec{\mathbf{r}}}$ is a conjugate operator given  in Eq. (\ref{bb}),
may not define a bounded operator. Therefore, the correct framework is to interpret the commutator
in the sense of quadratic forms. That is, for any \( f \in \mathcal{C}_c(\mathbb{Z}^d) \), we define
\begin{definition}\label{def:form-commutator}
Let $H$ and $A$ be self-adjoint on $\Hc$. The \emph{form commutator} of $H$ with $A$ is the sesquilinear form
\[
\mathfrak q_A^H(f,g)\;:=\;\langle Hf,\,\mathrm i A g\rangle-\langle \mathrm i A f,\,Hg\rangle,
\qquad f,g\in\Dc(H)\cap\Dc(A).
\]
We say that $[H,\mathrm i A]_\circ$ exists (on a reducing subspace $\Mc\subset\Hc$) if $\mathfrak q_A^H$ extends by continuity
to a bounded form on $\Mc\times\Mc$. In that case there is a unique bounded operator $B\in\Bc(\Mc)$ such that
\[
\langle f,\,B g\rangle=\mathfrak q_A^H(f,g)\quad\text{for all }f,g\in\Dc(H)\cap\Dc(A)\cap\Mc,
\]
and we \emph{define} $B:= [H,\mathrm i A]_\circ$ on $\Mc$.
In applications below we take $\Mc=E_\Ic(H)\Hc$ for compact interior windows $\Ic\Subset\sigma(H)^\circ$ and write
\[
E_\Ic(H)\,[H,\mathrm i A]_\circ\,E_\Ic(H)
\]
for the corresponding bounded operator on $E_\Ic(H)\Hc$.
\end{definition}
\subsubsection{Localized regularity and Mourre estimate }\label{subsec:loc-classes}
In what follows we work exclusively on compact interior windows
\(\Ic\Subset\sigma(H)^\circ\) and use only the \emph{localized} classes
\(\mathcal C^k_{loc}(A)\) and \(\mathcal C^{1,1}_{loc}(A)\).

\begin{definition}
Let $H$ be self-adjoint and \(\Ic\Subset\sigma(H)^\circ\).
We say \(H\in\mathcal C^k_{loc}(A)\) on \(\Ic\) if, for every \(\varphi\in C_c^\infty(\Ic)\),
the bounded operator \(\varphi(H)\) belongs to \(\mathcal C^k(A)\).
Similarly, \(H\in\mathcal C^{1,1}_{loc}(A)\) on \(\Ic\) if \(\varphi(H)\in\mathcal C^{1,1}(A)\) for all
\(\varphi\in C_c^\infty(\Ic)\).
Equivalently, \(H\in\mathcal C^k_{loc}(A)\) on \(\Ic\) iff \(E_\Ic(H)\,(H-\mathrm i)^{-1}\in\mathcal C^k(A)\)
(and likewise for \(\mathcal C^{1,1}\)).
\end{definition}
We construct a conjugate operator \(A_{\mathbb{Z}^d,\vec{\mathbf{r}}}\) on \( \ell^2(\mathbb{Z}^d) \) by summing the one-dimensional generators:
\begin{equation} \label{bb}
A_{\mathbb{Z}^d,\vec{\mathbf{r}}} := \sum_{j=1}^d A_{\mathbb{Z},r_j},
\quad \text{with} \quad
A_{\mathbb{Z},r_j} := -\frac{\mathrm{i} \cdot \mathrm{sign}(r_j)}{2} \left( U_j(Q_j + \tfrac{1}{2}) - (Q_j + \tfrac{1}{2}) U_j^* \right),
\end{equation}
where $\mathrm{sign}(r_j)=1$ if $r_j\geq 0,\ -1$ if $r_j<0$. Each $A_{\mathbb{Z},r_j}$ is essentially self-adjoint on \(\mathcal{C}_c(\mathbb{Z}^d)\). We refer to \cite{GG} and \cite[Lemma 5.7]{Mic} for the essential self-adjointness and \cite[Lemma 3.1]{GG} for the domain.
\begin{remark}
For $H=\Delta_{\Z^d}^{\vec{\mathrm r}}$ and the conjugate operator $A=A_{\mathbb{Z}^d,\vec{\mathrm r}}$ constructed above,
the form $\mathfrak q_A^H$ is well-defined on the core $\Cc_c(\Z^d)$ even when some $r_j<0$ (unbounded from below).
By the localized $\Cc^2$-regularity established earlier and the interior Mourre framework,
$\mathfrak q_A^H$ extends to a bounded form on $E_\Ic(H)\Hc$; hence $E_\Ic(H)[H,\mathrm i A]_\circ E_\Ic(H)\in\Bc(E_\Ic(H)\Hc)$.
This is the object that appears in the Mourre estimate on $\Ic$.
\end{remark}
This ensures symmetry and the correct interpretation on a dense domain.
 The commutator estimate that follows is then valid in this sense of forms on the dense core \( \mathcal{C}_c(\mathbb{Z}^d) \), and no compactness remainder is needed. This extends classical Mourre theory to anisotropic fractional powers of discrete Laplacians, even when the operators involved are unbounded from below.  We make a preliminary work for the construction of a conjugate operator for $\Delta_\Z^{\vec{\mathbf{r}}}$. This is a known result, e.g., \cite{AF}, see also \cite{GG, Mic}.

We fix an anisotropic fractional Laplacian \( \Delta_{\mathbb{Z}^d}^{\vec{\mathbf{r}}} \) on \( \ell^2(\mathbb{Z}^d) \), with \( \vec{\mathbf{r}} = (r_1, \dots, r_d) \in \mathbb{R}^d \setminus \{0\} \). The operator is self-adjoint and non-local. It may be unbounded or non-positive when some \( r_j < 0 \), and its spectral properties can vary drastically depending on the sign and size of each exponent. We denote by
\[
\lambda_{\vec{\mathbf{r}}} := \sup \sigma(\Delta^{\vec{\mathbf{r}}})
\]
the top of its spectrum, and we work under the assumption that the essential spectrum is purely absolutely continuous on the open interval \( (0, \lambda_{\vec{\mathbf{r}}}) \).\\
\subsubsection{Commutator structure.}

\begin{theorem}\label{thm:mourre_Zd}
Let \( \vec{\mathbf{r}} \in \mathbb{R}^d \setminus \{0\} \).
Then for any compact interval \( \mathcal{I} \subset \sigma(\Delta_{\Z^d}^{\vec{\mathbf{r}}})^\circ \),
there exists \( c_{\mathcal{I}} > 0 \) such that
\[
E_{\mathcal{I}}(\Delta_{\Z^d}^{\vec{\mathbf{r}}})\, [\Delta_{\Z^d}^{\vec{\mathbf{r}}}, \mathrm{i}A_{\mathbb{Z}^d,\vec{\mathbf{r}}}]_\circ\, E_{\mathcal{I}}(\Delta_{\Z^d}^{\vec{\mathbf{r}}})
\ge c_{\mathcal{I}}\, E_{\mathcal{I}}(\Delta_{\Z^d}^{\vec{\mathbf{r}}}),
\]
in the sense of quadratic forms on \( \ell^2(\mathbb{Z}^d) \).
\end{theorem}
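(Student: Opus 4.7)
The plan is to pass to the Fourier representation $\mathcal F:\ell^2(\Z^d)\to L^2(\T^d)$, where $\Delta_{\Z^d}^{\vec{\mathbf{r}}}$ acts as multiplication by the symbol $\vartheta_{\vec{\mathbf{r}}}(\theta)=\sum_j(2-2\cos\theta_j)^{r_j}$, and to reduce the operator inequality to a pointwise lower bound on $\T^d$. The key structural observation is that both $\Delta_{\Z^d}^{\vec{\mathbf{r}}}=\sum_j\Delta_{\Z^d,j}^{r_j}$ and $A_{\Z^d,\vec{\mathbf{r}}}=\sum_j A_{\Z,r_j}$ split as coordinate sums on distinct tensor factors, so the double sum
\[
[\Delta_{\Z^d}^{\vec{\mathbf{r}}},\,\mathrm iA_{\Z^d,\vec{\mathbf{r}}}]_\circ=\sum_{j,k}\bigl[\Delta_{\Z^d,j}^{r_j},\,\mathrm iA_{\Z,r_k}\bigr]
\]
collapses to the diagonal $\sum_j[\Delta_{\Z^d,j}^{r_j},\mathrm iA_{\Z,r_j}]$, since operators acting on different coordinates commute.

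For each $j$ I would compute the one-dimensional commutator directly in Fourier. With the paper's convention $(U_jf)(n)=f(n+e_j)$ one has $U_j\leftrightarrow e^{\mathrm i\theta_j}$ and $Q_j\leftrightarrow\mathrm i\partial_{\theta_j}$; a symmetrization check then shows that $A_{\Z,r_j}$ acts, up to a bounded symmetric zeroth-order term that commutes with the multiplication operator $\vartheta_j^{r_j}$, as the first-order symbol $\mathrm{sign}(r_j)\,\mathrm i\sin\theta_j\,\partial_{\theta_j}$. Since $\partial_{\theta_j}(2-2\cos\theta_j)^{r_j}=2r_j\sin\theta_j(2-2\cos\theta_j)^{r_j-1}$, a one-line Leibniz calculation yields the Fourier multiplier
\[
\bigl[\Delta_{\Z}^{r_j},\,\mathrm iA_{\Z,r_j}\bigr]_\circ\;=\;2|r_j|\,\sin^2\theta_j\,(2-2\cos\theta_j)^{r_j-1},
\]
which is the fractional analogue of the classical Kato--Mourre identity. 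Summing over $j$ and using $\sin^2\theta_j=\vartheta_j(1-\vartheta_j/4)$ with $\vartheta_j:=2-2\cos\theta_j$, I would write the full commutator as multiplication by
\[
\Phi(\theta)=\sum_{j=1}^d 2|r_j|\,\vartheta_j(\theta_j)^{r_j}\bigl(1-\tfrac{\vartheta_j(\theta_j)}{4}\bigr),
\]
a form in which the zero set of $\Phi$ is transparent.

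The quadratic form bound then reduces to showing $\Phi\geq c_\Ic>0$ on the spectral window $K_\Ic:=\{\theta\in\T^d:\vartheta_{\vec{\mathbf{r}}}(\theta)\in\Ic\}$. Two observations finish the job. First, $K_\Ic$ is compact in $\T^d$: when $r_j<0$ the symbol blows up at $\theta_j=0$ (Proposition~\ref{prop:poles-Zd}), so boundedness of $\vartheta_{\vec{\mathbf{r}}}$ on $K_\Ic$ forces $\theta$ to stay uniformly away from the polar set $\Sigma$, while directions with $r_j\geq0$ are bounded a priori. Second, $\Phi\geq 0$ on $\T^d\setminus\Sigma$, and $\Phi(\theta)=0$ precisely where each $\theta_j$ is a one-dimensional critical point of $\vartheta_j^{r_j}$, i.e.\ exactly at the zeros of $\nabla\vartheta_{\vec{\mathbf{r}}}$. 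By Proposition~\ref{prop:thresholds-mixed} the corresponding critical values form exactly the finite threshold set, so the hypothesis $\Ic\cap\Thr(\Delta_{\Z^d}^{\vec{\mathbf{r}}})=\varnothing$ implies that $K_\Ic$ contains no zero of $\Phi$; continuity on a compact set then forces $\Phi\geq c_\Ic>0$. Since $E_\Ic(\Delta_{\Z^d}^{\vec{\mathbf{r}}})$ is unitarily equivalent to multiplication by $\mathbf 1_{K_\Ic}$, the pointwise inequality $\Phi\cdot\mathbf 1_{K_\Ic}\geq c_\Ic\mathbf 1_{K_\Ic}$ transports back to the claimed form bound on $\ell^2(\Z^d)$.

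The main obstacle is the rigorous interpretation of $[\Delta_{\Z^d}^{\vec{\mathbf{r}}},\mathrm iA_{\Z^d,\vec{\mathbf{r}}}]_\circ$ when some $r_j<0$: the raw Fourier multiplier $\Phi$ is unbounded near $\Sigma$, and each $A_{\Z,r_j}$ is itself unbounded, so one cannot naively interpret $HA-AH$ on the domain. I would handle this exactly in the spirit of Definition~\ref{def:form-commutator}, first defining $\mathfrak q_A^{H_0}$ as a sesquilinear form on the common core $\mathcal C_c(\Z^d)$ (equivalently $C^\infty(\T^d\setminus\Sigma)$ on the Fourier side), and then invoking the localized $\mathcal C^{1,1}_{loc}(A_{\Z^d,\vec{\mathbf{r}}})$-regularity of $\Delta_{\Z^d}^{\vec{\mathbf{r}}}$ on $\Ic$ to extend this form by continuity to a bounded operator on $E_\Ic(\Delta_{\Z^d}^{\vec{\mathbf{r}}})\ell^2(\Z^d)$. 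The compactness of $K_\Ic$ inside $\T^d\setminus\Sigma$ is precisely what ensures that the resulting multiplier $\mathbf 1_{K_\Ic}\Phi$ is bounded, closing the circle and delivering the quantitative constant $c_\Ic$.
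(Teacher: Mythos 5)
Your proposal is correct, and in fact it carries the argument further than the paper does. The paper's own proof consists only of the first step you describe: it computes the one\--dimensional commutator $[\Delta_{\Z,j}^{r_j},\mathrm iA_{\Z,r_j}]$ by the chain rule for the functional calculus and then stops, never summing over $j$, never passing to the spectral window of the $d$\--dimensional operator, and never producing the constant $c_{\mathcal I}$. Everything after your first paragraph --- the identification of the full commutator with the multiplier $\Phi(\theta)=\sum_j 2|r_j|\,\vartheta_j^{r_j}\bigl(1-\tfrac{\vartheta_j}{4}\bigr)$, the compactness of $K_{\mathcal I}=\vartheta_{\vec{\mathbf r}}^{-1}(\mathcal I)$ away from the polar set $\Sigma$, the identification of $\{\Phi=0\}$ with the critical set of the symbol and hence of its image with $\Thr_{\mathrm{fin}}$, and the continuity\--on\--a\--compact argument --- is exactly the missing content, so your write\--up is the more complete one. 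Two remarks. First, your one\--dimensional formula $[\Delta_\Z^{r_j},\mathrm iA_{\Z,r_j}]_\circ=2|r_j|\sin^2\theta_j\,(2-2\cos\theta_j)^{r_j-1}=\tfrac{|r_j|}{2}\vartheta_j^{r_j}(4-\vartheta_j)$ differs from the paper's claimed $|r_j|\,(4-\Delta_{\Z,j})\,\Delta_{\Z,j}^{r_j-1}$ by a factor $\vartheta_j/2$, which traces back to the paper asserting $[\Delta_{\Z,j},\mathrm iA_{\Z,r_j}]=\mathrm{sign}(r_j)(4-\Delta_{\Z,j})$; for the conjugate operator actually defined in \eqref{bb} the commutator must vanish at \emph{both} band edges $\lambda=0$ and $\lambda=4$ (both are critical values of $\vartheta$), so your $\tfrac{\mathrm{sign}(r_j)}{2}\Delta_{\Z,j}(4-\Delta_{\Z,j})$ is the correct identity and the paper's is not. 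Second, be explicit that the statement as printed cannot hold for \emph{every} compact $\mathcal I\subset\sigma(\Delta_{\Z^d}^{\vec{\mathbf r}})^\circ$: if $\mathcal I$ meets $\Thr_{\mathrm{fin}}$ then $K_{\mathcal I}$ contains zeros of $\Phi$ and no positive $c_{\mathcal I}$ exists. You correctly import the hypothesis $\mathcal I\cap\Thr(\Delta_{\Z^d}^{\vec{\mathbf r}})=\varnothing$ (which is how the theorem is invoked elsewhere in the paper, e.g.\ in Theorem~\ref{thm:mou}), but that hypothesis should be stated rather than used silently.
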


\begin{proof}
Since $ \Delta_{\Z,j} $ is bounded self--adjoint with $ \sigma(\Delta_{\Z,j}) = [0,4] $,
the fractional power $ \Delta_{\Z,j}^{r_j} $ is defined by the functional calculus.
For the function $f(\lambda)=\lambda^{r_j}$ one has $f'(\lambda)=r_j\,\lambda^{r_j-1}$,
which is bounded on every compact subinterval of $(0,4)$.
Therefore, for any spectral localization $E_{\Ic}(\Delta_{\Z,j})$ with
$\Ic \Subset (0,4)$, the chain rule for commutators
(see \cite[Prop.~5.1.5]{ABG}) applies and yields
\[
[\Delta_{\Z,j}^{r_j}, \mathrm{i}A_{\mathbb{Z},r_j}]
= r_j\, \Delta_{\Z,j}^{\,r_j-1}\,[\Delta_{\Z,j},\mathrm{i}A_{\mathbb{Z},r_j}]
\quad\text{on }\Ran E_{\Ic}(\Delta_{\Z,j}).
\]
Since $[\Delta_{\Z,j},\mathrm{i}A_{\mathbb{Z},r_j}] = \mathrm{sign}(r_j)(4-\Delta_{\Z,j})$,
it follows that
\[
[\Delta_{\Z,j}^{r_j}, \mathrm{i}A_{\mathbb{Z},r_j}]
= |r_j|\,(4-\Delta_{\Z,j})\,\Delta_{\Z,j}^{\,r_j-1}
\quad\text{on }\Ran E_{\Ic}(\Delta_{\Z,j}).
\]

\end{proof}

\begin{lemma}\label{lem:second_comm}
Let $r_j\in\mathbb{R}\setminus\{0\}$.
Then, in the sense of quadratic forms on $\Cc_c(\Z^d)$, one has
\[
\big[\,[\Delta_{\Z,j}^{\,r_j},\mathrm{i}A_{\Z,r_j}],\,\mathrm{i}A_{\Z,r_j}\big]
= r_j(r_j-1)\,\Delta_{\Z,j}^{\,r_j-2}\,(4-\Delta_{\Z,j})^2 \;-\; r_j\,\Delta_{\Z,j}^{\,r_j-1}\,(4-\Delta_{\Z,j}).
\]
If $r_j>0$ $($bounded case$)$, the identity holds in operator norm; if $r_j<0$ $($unbounded case$)$, it holds as a form identity on $\Cc_c(\Z^d)$.
\end{lemma}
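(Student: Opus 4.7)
The plan is to compute the double commutator by applying the Leibniz rule to the first commutator already obtained in Theorem~\ref{thm:mourre_Zd}, and then invoking the commutator chain rule a second time. From that theorem we have, as a localized operator identity on $\Ran E_\Ic(\Delta_{\Z,j})$ for $\Ic\Subset(0,4)$,
\[
[\Delta_{\Z,j}^{\,r_j},\mathrm iA_{\Z,r_j}] \;=\; r_j\,\mathrm{sign}(r_j)\,(4-\Delta_{\Z,j})\,\Delta_{\Z,j}^{\,r_j-1}
\;=\; |r_j|\,(4-\Delta_{\Z,j})\,\Delta_{\Z,j}^{\,r_j-1},
\]
together with the basic identity $[\Delta_{\Z,j},\mathrm iA_{\Z,r_j}]=\mathrm{sign}(r_j)(4-\Delta_{\Z,j})$.

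The first step will be to iterate: writing the inner commutator as a product of the two bounded (on the spectral window) factors $(4-\Delta_{\Z,j})$ and $\Delta_{\Z,j}^{\,r_j-1}$, I would apply the Leibniz rule
\[
\bigl[(4-\Delta_{\Z,j})\,\Delta_{\Z,j}^{\,r_j-1},\,\mathrm iA_{\Z,r_j}\bigr]
= [4-\Delta_{\Z,j},\,\mathrm iA_{\Z,r_j}]\,\Delta_{\Z,j}^{\,r_j-1}
 + (4-\Delta_{\Z,j})\,[\Delta_{\Z,j}^{\,r_j-1},\,\mathrm iA_{\Z,r_j}].
\]
The first bracket is handled directly: $[4-\Delta_{\Z,j},\mathrm iA_{\Z,r_j}]=-\mathrm{sign}(r_j)(4-\Delta_{\Z,j})$. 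For the second bracket I would reapply the chain rule (\cite[Prop.~5.1.5]{ABG}) to the $C^\infty$ function $\lambda\mapsto\lambda^{r_j-1}$ on the compact window $\Ic\Subset(0,4)$, obtaining
\[
[\Delta_{\Z,j}^{\,r_j-1},\,\mathrm iA_{\Z,r_j}]
= (r_j-1)\,\mathrm{sign}(r_j)\,(4-\Delta_{\Z,j})\,\Delta_{\Z,j}^{\,r_j-2}.
\]
Substituting both brackets, the prefactor $|r_j|\cdot\mathrm{sign}(r_j)=r_j$ emerges from the combined signs, and a direct collection of terms yields exactly $r_j(r_j-1)\Delta_{\Z,j}^{\,r_j-2}(4-\Delta_{\Z,j})^2 - r_j\Delta_{\Z,j}^{\,r_j-1}(4-\Delta_{\Z,j})$, since $(4-\Delta_{\Z,j})$ commutes with every $\Delta_{\Z,j}^{\,s}$ by the functional calculus.

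The second step is to justify the removal of the spectral localization. For $r_j>0$ all operators $\Delta_{\Z,j}^{\,s}$ appearing in the identity are bounded self--adjoint on $\ell^2(\Z)$, so the identity obtained on $\Ran E_\Ic(\Delta_{\Z,j})$ extends by density as $\Ic\nearrow(0,4)$ to a global operator identity, giving the bounded norm statement. For $r_j<0$ the factors $\Delta_{\Z,j}^{\,r_j-1}$ and $\Delta_{\Z,j}^{\,r_j-2}$ are unbounded (they blow up as $\lambda\to 0^+$), so I would work from the beginning in the form sense of Definition~\ref{def:form-commutator}, taking $f,g\in\Cc_c(\Z^d)$ and using the fact that finitely supported vectors lie in $\Dc(\Delta_{\Z,j}^{\,s})$ for every $s\in\R$ (since in Fourier they become multiplication by the smooth symbol $\vartheta(\theta)^{s}$, and the vanishing of the symbol at $\theta=0$ is compensated by the smoothness of $\hat f,\hat g$ near $0$). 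The two Leibniz/chain identities above then translate into identities between sesquilinear forms on $\Cc_c(\Z^d)$, which is precisely the conclusion.

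The main obstacle will be the second step in the negative--$r_j$ case: making sure that each of the three pairings $\langle \Delta_{\Z,j}^{\,r_j-1}f,\mathrm iA_{\Z,r_j}g\rangle$, $\langle(4-\Delta_{\Z,j})\Delta_{\Z,j}^{\,r_j-2}f,\mathrm iA_{\Z,r_j}g\rangle$, etc., that appear when the double commutator is unfolded is finite on $\Cc_c(\Z^d)\times\Cc_c(\Z^d)$ and can be rearranged without ambiguity. I expect to handle this cleanly by working on the Fourier side, where everything reduces to smooth multipliers against smooth compactly--supported (in $n$) functions whose Fourier transforms are smooth on $\T$; the remaining algebraic manipulation is then identical to the bounded case.
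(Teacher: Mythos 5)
Your computation is correct and follows essentially the same route as the paper: both arguments reduce to the $C^2$ chain rule for commutators from \cite[Prop.~5.1.5]{ABG} applied to $\psi(\lambda)=\lambda^{r_j}$, together with the fact that $[\Delta_{\Z,j},\mathrm iA_{\Z,r_j}]=\mathrm{sign}(r_j)(4-\Delta_{\Z,j})$ is itself a function of $\Delta_{\Z,j}$ and hence commutes with the entire functional calculus. The paper writes the second step abstractly as $[[\psi(B),C],C]=[\psi'(B),C][B,C]+\psi'(B)[[B,C],C]$ with $[\psi'(B),C]=\psi''(B)[B,C]$, whereas you apply Leibniz to the explicit product $|r_j|\,(4-\Delta_{\Z,j})\Delta_{\Z,j}^{\,r_j-1}$ and the chain rule to $\lambda\mapsto\lambda^{r_j-1}$; since all factors commute this is the same algebra, and your bookkeeping of the sign factors ($|r_j|\cdot\mathrm{sign}(r_j)=r_j$) is right.

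The one genuine flaw is in your justification of the $r_j<0$ case. It is not true that finitely supported vectors lie in $\Dc(\Delta_{\Z}^{\,s})$ for every $s\in\R$: on the Fourier side $\Delta_{\Z}^{\,s}$ is multiplication by $\vartheta(\theta)^s=(2(1-\cos\theta))^s\sim|\theta|^{2s}$, which for $s<0$ \emph{blows up} at $\theta=0$ rather than vanishing, while $\widehat f$ is a trigonometric polynomial with, generically, $\widehat f(0)=c\sum_n f(n)\neq0$ (e.g.\ $f=\delta_0$). The requirement $\int_{|\theta|<\epsilon}|\theta|^{4s}|\widehat f(\theta)|^2\,d\theta<\infty$ then forces $s>-\tfrac14$; smoothness of $\widehat f$ near $0$ compensates nothing unless $\widehat f$ vanishes there to sufficiently high order. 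In particular, for $r_j<0$ the right-hand side of the lemma paired with $\delta_0$ is a divergent positive integral (the two terms behave like $\int_0|\theta|^{2r_j-4}\,d\theta$ and $\int_0|\theta|^{2r_j-2}\,d\theta$), so the asserted form identity cannot be taken at face value on all of $\Cc_c(\Z^d)$: it has to be read after an energy localization $E_\Ic(\Delta_{\Z,j})$ with $\Ic\Subset(0,4)$ (exactly as in Proposition~\ref{prop:double-commutator-Zd}), or on vectors with enough vanishing moments. This defect is admittedly present in the statement itself and is left implicit in the paper's proof as well, but your explicit Fourier-side argument for finiteness of the pairings is wrong as written and should be replaced by the localized formulation.
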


\begin{proof}
Set $B_j:=\Delta_{\Z,j}$ and $C_j:=\mathrm{i}A_{\Z,r_j}$.
Since $[B_j,C_j]=\mathrm{sign}(r_j)\,(4-B_j)$ is a polynomial in $B_j$, it commutes with $B_j$ and with all $\psi^{(k)}(B_j)$.
Let $\psi(\lambda)=\lambda^{r_j}$ so that $\Delta_{\Z,j}^{r_j}=\psi(B_j)$.
By the standard rules for $\Cc^2$-functional calculus (see \cite[Prop.~5.1.5 \& Thm.~7.2.9]{ABG}),
\[
[\psi(B_j),C_j]=\psi'(B_j)\,[B_j,C_j],
\qquad
\big[\,[\psi(B_j),C_j],\,C_j\big]=[\psi'(B_j),C_j]\,[B_j,C_j]+\psi'(B_j)\,[[B_j,C_j],C_j].
\]
Using \cite[Theorem]{ABG} again $[\psi'(B_j),C_j]=\psi''(B_j)\,[B_j,C_j]$, and the facts
\[
[B_j,C_j]=\mathrm{sign}(r_j)\,(4-B_j), \quad
[B_j,C_j]^2=(4-B_j)^2, \quad
[[B_j,C_j],C_j]=-(4-B_j),
\]
together with $\psi'(\lambda)=r_j\lambda^{r_j-1}$ and $\psi''(\lambda)=r_j(r_j-1)\lambda^{r_j-2}$, we obtain
\[
\big[\,[\Delta_{\Z,j}^{\,r_j},\mathrm{i}A_{\Z,j}],\,\mathrm{i}A_{\Z,j}\big]
= r_j(r_j-1)\,\Delta_{\Z,j}^{\,r_j-2}(4-\Delta_j)^2 - r_j\,\Delta_{\Z,j}^{\,r_j-1}(4-\Delta_{\Z,j}),
\]
as claimed.
\end{proof}
This ensures that \( \Delta_{\Z^d,\vec{\mathbf{r}}}^{\vec{\mathbf{r}}} \in \mathcal{C}_{loc}^2(A_{\Z^d,\vec{\mathbf{r}}
}) \), and justifies the application of Mourre theory in both bounded and unbounded cases.
\begin{proposition}\label{prop:double-commutator-Zd}
For $j\in\{1,\dots,d\}$ and $r_j\in\R\setminus\{0\}$, we define
\[
g_{r_j}(\lambda):=r_j(r_j-1)\,\lambda^{r_j-2}(4-\lambda)^2\;-\;r_j\,\lambda^{r_j-1}(4-\lambda),
\qquad \lambda\in[0,4].
\]
Then, in the sense of quadratic forms on $\Cc_c(\Z)$,
\[
\big[\,[\Delta_{\Z,j}^{\,r_j},\mathrm{i}A_{\Z,r_j}],\,\mathrm{i}A_{\Z,r_j}\big]
\;=\;g_{r_j}(\Delta_{\Z,j}).
\]
Moreover, for every compact $\Jc\subset\sigma(\Delta_{\Z,j}^{\,r_j})^\circ$,
\[
E_\Jc(\Delta_{\Z,j}^{\,r_j})\,
\big[\,[\Delta_{\Z,j}^{\,r_j},\mathrm{i}A_{\Z,r_j}],\,\mathrm{i}A_{\Z,r_j}\big]_{\circ}\,
E_\Jc(\Delta_{\Z,j}^{\,r_j})\in\Bc(\ell^2(\Z)).
\]
In particular:
\begin{itemize}
\item if $r_j>0$, then $\sigma(\Delta_{\Z,j}^{\,r_j})=[0,4^{r_j}]$ and one may take $\Jc\Subset (0,4^{r_j})$;
\item if $r_j<0$, then $\sigma(\Delta_{\Z,j}^{\,r_j})=[4^{r_j},\infty)$ and one may take $\Jc\Subset (4^{r_j},\infty)$.
\end{itemize}
By a finite sum over $j$, it follows that
\(
\Delta_{\Z^d}^{\vec{\mathbf{r}}}\in \Cc^2_{\mathrm{loc}}(A_{\Z^d,\vec{\mathbf{r}}}).
\)
\end{proposition}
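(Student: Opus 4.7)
The plan is to combine Lemma~\ref{lem:second_comm} with spectral mapping and the tensor structure of $(\Delta_{\Z^d}^{\vec r},A_{\Z^d,\vec r})$. First, Lemma~\ref{lem:second_comm} already gives, as a form identity on $\Cc_c(\Z)$, the expression $r_j(r_j-1)\Delta_{\Z,j}^{\,r_j-2}(4-\Delta_{\Z,j})^2-r_j\Delta_{\Z,j}^{\,r_j-1}(4-\Delta_{\Z,j})$, which is nothing but the Borel functional calculus of $\Delta_{\Z,j}$ applied to $g_{r_j}$. This settles the identity $\bigl[[\Delta_{\Z,j}^{\,r_j},\mathrm i A_{\Z,r_j}],\mathrm i A_{\Z,r_j}\bigr]=g_{r_j}(\Delta_{\Z,j})$.

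For the localized bound, I invoke Proposition~\ref{prop:spec-all-r}: when $\Jc\Subset\sigma(\Delta_{\Z,j}^{\,r_j})^\circ$, the preimage $\{\lambda\in[0,4]:\lambda^{r_j}\in\Jc\}$ is a compact subinterval $[a,b]\subset(0,4)$ for either sign of $r_j$, since $\Jc$ avoids the respective endpoints of $\sigma(\Delta_{\Z,j}^{\,r_j})$ (the endpoint set $\{0,4^{r_j}\}$ if $r_j>0$, and $\{4^{r_j},+\infty\}$ if $r_j<0$). On $[a,b]$ the function $g_{r_j}$ is continuous, hence bounded, and since $E_\Jc(\Delta_{\Z,j}^{\,r_j})$ and $g_{r_j}(\Delta_{\Z,j})$ commute (both are functions of $\Delta_{\Z,j}$), functional calculus yields
\[
\bigl\|E_\Jc(\Delta_{\Z,j}^{\,r_j})\,g_{r_j}(\Delta_{\Z,j})\,E_\Jc(\Delta_{\Z,j}^{\,r_j})\bigr\|
\le\sup_{\lambda\in[a,b]}|g_{r_j}(\lambda)|<\infty.
\]

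For the $\Z^d$ conclusion, $A_{\Z,r_i}$ and $\Delta_{\Z^d,j}^{\,r_j}$ act on disjoint tensor factors whenever $i\neq j$ and hence commute strongly, so both the single and the double form commutators split along coordinates; in particular
\[
\bigl[[\Delta_{\Z^d}^{\vec r},\mathrm i A_{\Z^d,\vec r}],\mathrm i A_{\Z^d,\vec r}\bigr]
=\sum_{j=1}^d g_{r_j}(\Delta_{\Z^d,j})
\]
as forms on $\Cc_c(\Z^d)$. The $\Cc^2_{\mathrm{loc}}(A_{\Z^d,\vec r})$-regularity then follows by combining this tensor identity with the per-coordinate bound through the Helffer--Sj\"ostrand calculus applied to $\varphi(H_0)$ for $\varphi\in C_c^\infty(\Ic)$, $\Ic\Subset\sigma(H_0)^\circ\setminus\Thr_{\mathrm{fin}}$.

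The hard part is precisely this last step: the joint window $\Ic$ bounds $H_0=\sum_j\Delta_{\Z^d,j}^{\,r_j}$ but not each individual factor, and for $r_j\in(0,2)$ the function $g_{r_j}$ is singular at $\Delta_{\Z,j}=0$, a value compatible with $H_0\in\Ic$. The way out is Proposition~\ref{prop:thresholds-mixed}: $\Ic\cap\Thr=\varnothing$ implies $\nabla\vartheta_{\vec r}\neq 0$ on $\vartheta_{\vec r}^{-1}(\Ic)$, so a partition of unity on $\T^d$ subordinated to regular values, together with the Helffer--Sj\"ostrand representation of $\varphi(H_0)$, absorbs each singularity into the resolvents $(H_0-z)^{-1}$ and reduces the global bound to the per-coordinate estimate above.
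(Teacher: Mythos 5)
Your treatment of the two displayed claims coincides with the paper's proof: the form identity comes from Lemma~\ref{lem:second_comm} together with the $C^2$-chain rule and the explicit commutators $[\Delta_{\Z,j},\mathrm iA_{\Z,r_j}]=4-\Delta_{\Z,j}$, $[[\Delta_{\Z,j},\mathrm iA_{\Z,r_j}],\mathrm iA_{\Z,r_j}]=-(4-\Delta_{\Z,j})$, and the localized bound follows from the spectral-mapping identity $E_\Jc(\Delta_{\Z,j}^{\,r_j})=E_{\varphi^{-1}(\Jc)}(\Delta_{\Z,j})$ with $\varphi(\lambda)=\lambda^{r_j}$, $\varphi^{-1}(\Jc)$ a compact subset of $(0,4)$ on which $g_{r_j}$ is continuous. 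This part is correct and is essentially the paper's argument.

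Where you diverge is the final assertion $\Delta_{\Z^d}^{\vec{\mathbf r}}\in\Cc^2_{\mathrm{loc}}(A_{\Z^d,\vec{\mathbf r}})$. You are right to flag this as the hard step: the paper disposes of it with ``by a finite sum over $j$'', but localization in the \emph{total} energy $H_0=\sum_j\Delta_{\Z^d,j}^{\,r_j}$ does not localize each $\Delta_{\Z,j}$ away from $0$, and $g_{r_j}$ blows up at $\lambda=0$ whenever $r_j\in(0,1)\cup(1,2)$ (for $r_j<0$ the total energy does force $\Delta_{\Z,j}$ away from $0$, and for $r_j=1$ or $r_j\ge2$ the function $g_{r_j}$ is bounded on $[0,4]$, so the only problematic range is $r_j\in(0,2)\setminus\{1\}$ with $d\ge2$). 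Identifying this is a genuine contribution of your write-up. However, your proposed repair does not close the gap. The double commutator $\sum_j g_{r_j}(\Delta_{\Z^d,j})$ is a Fourier multiplier and commutes with $H_0$, so
\[
E_\Ic(H_0)\,\Bigl(\sum_j g_{r_j}(\Delta_{\Z^d,j})\Bigr)\,E_\Ic(H_0)
\]
is multiplication in $\theta$ by $\un_{\vartheta_{\vec{\mathbf r}}^{-1}(\Ic)}(\theta)\sum_j g_{r_j}(2-2\cos\theta_j)$; its operator norm equals the essential supremum of this symbol, and no partition of unity, resolvent insertion, or Helffer--Sj\"ostrand representation can reduce that supremum. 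The non-vanishing of $\nabla\vartheta_{\vec{\mathbf r}}$ on $\vartheta_{\vec{\mathbf r}}^{-1}(\Ic)$ is a statement about level sets, not about the size of a commuting multiplier, so it cannot ``absorb'' the singularity. Since for $d\ge2$ and $r_j\in(0,2)\setminus\{1\}$ the set $\vartheta_{\vec{\mathbf r}}^{-1}(\Ic)$ generically meets every neighbourhood of the hyperplane $\{\theta_j=0\}$ in positive measure (such points are not critical points of $\vartheta_{\vec{\mathbf r}}$ and hence their energies are not in $\Thr_{\mathrm{fin}}$), the localized double commutator is genuinely unbounded there. A correct argument must either restrict the admissible exponents, enlarge the excluded energy set to cover the images of the coordinate hyperplanes $\{\theta_j=0\}$ (the non-smooth/polar set of the symbol), or localize jointly in the commuting family $(\Delta_{\Z^d,1},\dots,\Delta_{\Z^d,d})$ rather than in $H_0$ alone.
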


\begin{proof}
Set $\varphi(\lambda)=\lambda^{r_j}$ on $[0,4]$. By Lemma~\ref{lem:second_comm} (proved for the one-dimensional
component along direction $j$) we know, as quadratic forms on $\Cc_c(\Z)$,
\[
[\Delta_{\Z,j},\mathrm{i}A_{\Z,r_j}]=4-\Delta_{\Z,j},
\qquad
[[\Delta_{\Z,j},\mathrm{i}A_{\Z,r_j}],\mathrm{i}A_{\Z,r_j}]=-(4-\Delta_{\Z,j}).
\]
Using the standard \(C^2\)-chain rule for form-commutators (e.g. \cite[Prop.~6.2.10]{ABG}),
for $\varphi(\lambda)=\lambda^{r_j}$ one gets, still as forms on $\Cc_c(\Z)$,
\[
\big[\,[\varphi(\Delta_{\Z,j}),\mathrm{i}A_{\Z,r_j}],\,\mathrm{i}A_{\Z,r_j}\big]
=\varphi''(\Delta_{\Z,j})\,[\Delta_{\Z,j},\mathrm{i}A_{\Z,r_j}]^2
\;+\;\varphi'(\Delta_{\Z,j})\,[[\Delta_{\Z,j},\mathrm{i}A_{\Z,r_j}],\mathrm{i}A_{\Z,r_j}].
\]
Plugging the identities from above and $\varphi'(\lambda)=r_j\lambda^{r_j-1}$,
$\varphi''(\lambda)=r_j(r_j-1)\lambda^{r_j-2}$ yields
\[
\big[\,[\Delta_{\Z,j}^{\,r_j},\mathrm{i}A_{\Z,r_j}],\,\mathrm{i}A_{\Z,r_j}\big]
=r_j(r_j-1)\,\Delta_{\Z,j}^{\,r_j-2}(4-\Delta_{\Z,j})^2
\;-\;r_j\,\Delta_{\Z,j}^{\,r_j-1}(4-\Delta_{\Z,j})
=g_{r_j}(\Delta_{\Z,j}),
\]
which proves the first claim as a form identity.

Let $\Jc\subset\sigma(\Delta_{\Z,j}^{\,r_j})^\circ$ be compact and set
\(\Ic:=\varphi^{-1}(\Jc)\subset(0,4)\). By the Borel functional calculus,
\(E_\Jc(\Delta_{\Z,j}^{\,r_j})=E_\Ic(\Delta_{\Z,j})\).
Hence
\[
\begin{aligned}
&E_\Jc(\Delta_{\Z,j}^{\,r_j})
\big[\,[\Delta_{\Z,j}^{\,r_j},\mathrm{i}A_{\Z,r_j}],\,\mathrm{i}A_{\Z,r_j}\big]
E_\Jc(\Delta_{\Z,j}^{\,r_j})
\\
&\qquad=E_\Ic(\Delta_{\Z,j})\;
\big\{\,r_j(r_j-1)\,\Delta_{\Z,j}^{\,r_j-2}(4-\Delta_{\Z,j})^2
\;-\;r_j\,\Delta_{\Z,j}^{\,r_j-1}(4-\Delta_{\Z,j})\,\big\}\;
E_\Ic(\Delta_{\Z,j}).
\end{aligned}
\]
The right-hand side is \(E_\Ic(\Delta_{\Z,j})\,\psi(\Delta_{\Z,j})\,E_\Ic(\Delta_{\Z,j})\) with
\(\psi(\lambda)=g_{r_j}(\lambda)\), a continuous function on the compact set \(\Ic\subset(0,4)\).
By the spectral theorem this localized operator is bounded, with
\(\|E_\Ic\psi(\Delta_{\Z,j})E_\Ic\| \le \sup_{\lambda\in\Ic}|g_{r_j}(\lambda)|<\infty\).
We thus obtain the bounded extension, denoted by the subscript \(\circ\):
\[
E_\Jc(\Delta_{\Z,j}^{\,r_j})\,
\big[\,[\Delta_{\Z,j}^{\,r_j},\mathrm{i}A_{\Z,r_j}],\,\mathrm{i}A_{\Z,r_j}\big]_{\circ}\,
E_\Jc(\Delta_{\Z,j}^{\,r_j})\in\Bc(\ell^2(\Z)).
\]

Now, if \(r_j>0\), then \(\sigma(\Delta_{\Z,j}^{\,r_j})=[0,4^{r_j}]\) and we may choose
\(\Jc\Subset(0,4^{r_j})\). If \(r_j<0\), then \(\sigma(\Delta_{\Z,j}^{\,r_j})=[4^{r_j},\infty)\) and we
may choose \(\Jc\Subset(4^{r_j},\infty)\).

Finally, since \(\Delta_{\Z^d}^{\vec{\mathbf{r}}}=\sum_{j=1}^d \mathrm{id}^{\otimes(j-1)}\otimes\Delta_{\Z,j}^{\,r_j}\otimes
\mathrm{id}^{\otimes(d-j)}\) is a finite sum of components that lie in \(\Cc^2_{\mathrm{loc}}(A_{\Z^d,\vec{\mathbf{r}}})\),
we conclude that
\(\Delta_{\Z^d}^{\vec{\mathbf{r}}}\in \Cc^2_{\mathrm{loc}}(A_{\Z^d,\vec{\mathbf{r}}})\).
\end{proof}

\noindent\textit{Convention.}
We use $[T,\mathrm{i}A]$ for the \emph{form} commutator on a common core (e.g.\ $\Cc_c(\Z)$).
When a localization renders the operator bounded, its \emph{bounded extension} is denoted by
$[T,\mathrm{i}A]_{\circ}$; similarly for nested commutators.

\begin{remark}
We do not use global statements. For orientation only: a \emph{global} Mourre estimate (without spectral localization)
would require that the derivative of \(h_{\vec r}\) along the \(A\)-flow does not vanish on the region considered,
which typically fails at thresholds (critical/von Hove energies) for periodic discrete Laplacians.
This is why we remain \emph{localized} on \(\Ic\Subset\sigma(H)^\circ\setminus\Thr(H)\).
\end{remark}
\section{Proof of main result}
To prove our main result, we first establish and recall the following intermediate and technical statements.

\begin{proposition}
\label{prop:C01-functional}
Let $A_{\mathbb{Z}^d,\vec{\mathbf{r}}}$ be the standard conjugate operator on $\ell^2(\mathbb{Z}^d)$, as given in \eqref{bb} and
$\Lambda(Q):=\sum_{j=1}^d \langle Q_j\rangle$.
Assume $T\in\mathcal{B}(\ell^2(\mathbb{Z}^d))$ is symmetric and
\[
\int_{1}^{\infty}\Bigl\|\,\xi\!\Bigl(\tfrac{\Lambda(Q)}{t}\Bigr)\,T\,\Bigr\|\,\frac{dt}{t}<\infty
\quad\text{for some }\ \xi\in C_c^\infty((0,\infty)).
\]
Then $T\in \mathcal{C}^{0,1}(A_{\mathbb{Z}^d,\vec{\mathbf{r}}})$.
\end{proposition}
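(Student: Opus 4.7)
The plan is to verify the integral characterization
\[
\int_{0}^{1}\bigl\|\,e^{\mathrm{i}\tau A}\,T\,e^{-\mathrm{i}\tau A}-T\,\bigr\|\,\frac{d\tau}{\tau}<\infty,
\]
which defines the class $\mathcal{C}^{0,1}(A)$ in the sense of \cite[\S5.2]{ABG}, with $A=A_{\mathbb{Z}^d,\vec{\mathbf{r}}}$. The architecture follows the ABG strategy of converting a Dini condition at spatial infinity into a Dini condition for the $A$-flow, exploiting the fact that $A_{\mathbb{Z}^d,\vec{\mathbf{r}}}$ generates (up to bounded corrections) dilations of $\Lambda(Q)$.

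First I would establish the key \emph{approximate rescaling identity}. A direct computation on $\mathcal{C}_c(\mathbb{Z}^d)$ using the explicit form \eqref{bb} gives $[\langle Q_j\rangle,\mathrm{i}A_{\mathbb{Z},r_j}]=\mathrm{sign}(r_j)\,\langle Q_j\rangle+R_j$ where $R_j\in\mathcal{B}(\ell^2(\mathbb{Z}^d))$ arises from the bounded commutators $[\langle Q_j\rangle,U_j]$ and $[\langle Q_j\rangle,U_j^*]$. Summing over $j$, $[\Lambda(Q),\mathrm{i}A]=\Lambda(Q)+R$ with $R$ bounded. By Helffer--Sj\"ostrand functional calculus applied to $\xi\in C_c^\infty((0,\infty))$ and Duhamel, one obtains
\[
e^{\mathrm{i}\tau A}\,\xi(\Lambda(Q)/t)\,e^{-\mathrm{i}\tau A}=\xi\!\left(\Lambda(Q)/(te^{-\tau})\right)+E_{\tau,t},
\qquad \|E_{\tau,t}\|\le C\,|\tau|\,t^{-1},
\]
uniformly for $t\ge 1$, $|\tau|\le 1$.

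Next I would perform a \emph{Calder\'on-type decomposition} of $T$. Up to rescaling $\xi$, arrange $\int_{0}^{\infty}\xi(s)\,\frac{ds}{s}=1$; the hypothesis is preserved under such a rescaling. Pick $\eta\in C_c^\infty(\mathbb{R})$ with $\eta\equiv 1$ near $0$ and write
\[
T=\eta(\Lambda(Q))\,T+\bigl(1-\eta(\Lambda(Q))\bigr)\,T,
\]
and represent the second summand through the reproducing formula
\[
\bigl(1-\eta(\Lambda(Q))\bigr)\,T=\int_{1}^{\infty}\xi(\Lambda(Q)/t)\,T\,\frac{dt}{t}+\widetilde T_{\mathrm{loc}},
\]
where $\widetilde T_{\mathrm{loc}}$ collects the low-$t$ contribution and is compactly supported in $\Lambda(Q)$. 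The \emph{localized pieces} $\eta(\Lambda(Q))T$ and $\widetilde T_{\mathrm{loc}}$ have compact support in $\Lambda(Q)$, which reduces their commutators with $A$ to bounded operators (since $A$ differs from multiplication by a polynomial in $Q$ by bounded shift terms on such support). Hence they lie in $\mathcal{C}^1(A)\subset\mathcal{C}^{0,1}(A)$.

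Finally, for the \emph{tail integral} I would conjugate under $e^{\pm\mathrm{i}\tau A}$, substitute the approximate rescaling identity from Step~1, perform the change of variable $t\mapsto te^\tau$ inside the $\Lambda$-cutoff, and organize the difference $e^{\mathrm{i}\tau A}(\cdots)e^{-\mathrm{i}\tau A}-(\cdots)$ as a sum of (i) an error term $\int_{1}^{\infty}E_{\tau,t}\,T\,\frac{dt}{t}$, trivially bounded by $C|\tau|\int_{1}^{\infty}\|\xi(\Lambda(Q)/t)T\|\,\frac{dt}{t^2}$, and (ii) the telescoping piece $\int_{1}^{\infty}\bigl[\xi(\Lambda(Q)/(te^{-\tau}))-\xi(\Lambda(Q)/t)\bigr]\,T\,\frac{dt}{t}$. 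Using the hypothesis and Fubini after dividing by $\tau$ gives a bound of the form $C\int_{1}^{\infty}\|\xi(\Lambda(Q)/t)\,T\|\,\frac{dt}{t}$, which is finite by assumption.

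The main obstacle is Step~1: the dilation identity $[\Lambda(Q),\mathrm{i}A]=\Lambda(Q)+R$ holds only up to the bounded remainder $R$, and this remainder must be tracked through the functional calculus so that the correction term $E_{\tau,t}$ enjoys the decay $\mathcal{O}(|\tau|/t)$ uniformly in $t$. Without such uniform control, the $\tau$-integral in the Dini characterization would fail to converge. Once this rescaling is cleanly established, the remaining steps reduce to a standard Calder\'on reproducing argument in the spirit of \cite[Thm.~7.5.8]{ABG}.
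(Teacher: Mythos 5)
Your overall architecture (convert the spatial Dini condition into a Dini condition along the $A$-flow via a Calder\'on reproducing formula, in the spirit of \cite[Thm.~7.5.8]{ABG}) is a legitimate strategy in the continuum, but your Step~1 --- which you yourself flag as the main obstacle --- is false for the discrete conjugate operator \eqref{bb}, and the failure is fatal rather than technical. Writing $(Uf)(n)=f(n-1)$ and using $U(Q+\tfrac12)=(Q-\tfrac12)U$, a direct computation gives
\[
[\langle Q\rangle,\mathrm{i}A_{\mathbb{Z},r}]
=\tfrac{1}{2}\,\mathrm{sign}(r)\bigl(c_+(Q)\,U+c_-(Q)\,U^{*}\bigr),
\qquad c_\pm(n)=\frac{(2n\mp1)^2}{2\bigl(\langle n\rangle+\langle n\mp1\rangle\bigr)}\sim\langle n\rangle .
\]
This is \emph{not} $\mathrm{sign}(r)\langle Q\rangle+R$ with $R$ bounded: modulo bounded terms it equals $\tfrac12\,\mathrm{sign}(r)\,\langle Q\rangle(U+U^{*})=\mathrm{sign}(r)\bigl(\langle Q\rangle-\tfrac12\langle Q\rangle\Delta_{\mathbb{Z}}\bigr)$, and $\langle Q\rangle\Delta_{\mathbb{Z}}$ is unbounded on $\ell^2(\mathbb{Z})$. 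So $[\Lambda(Q),\mathrm{i}A]$ is merely \emph{comparable in size} to $\Lambda(Q)$; it is not a function of $Q$ alone, and $e^{\mathrm{i}\tau A}\Lambda(Q)e^{-\mathrm{i}\tau A}$ is not, even to first order in $\tau$, a rescaling of $\Lambda(Q)$. Consequently the approximate covariance $e^{\mathrm{i}\tau A}\xi(\Lambda/t)e^{-\mathrm{i}\tau A}=\xi(\Lambda/(te^{-\tau}))+E_{\tau,t}$ with $\|E_{\tau,t}\|\le C|\tau|/t$ does not follow. What Duhamel and the first-order structure of $A$ actually give is only $\|e^{\mathrm{i}\tau A}\xi(\Lambda/t)e^{-\mathrm{i}\tau A}-\xi(\Lambda/t)\|=O(|\tau|)$ \emph{uniformly in} $t$ (the derivative $\xi'(\cdot/t)\cdot t^{-1}$ is exactly compensated by the $O(t)$ coefficients of $A$ on $\operatorname{supp}\xi(\cdot/t)$), and inserting an error that is only $O(|\tau|)$ into $\int_1^\infty(\cdots)\,\frac{dt}{t}$ destroys both your error estimate and the telescoping step; the gain of $t^{-1}$ was the entire point of the rescaling.

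The paper sidesteps this exact difficulty: it uses a dyadic decomposition $T_k=\rho_k(\Lambda)\,T\,\rho_k(\Lambda)$, $\rho_k(\lambda)=\rho(2^{-k}\lambda)$, and only invokes the robust bound $\sup_k\|[A,\rho_k(\Lambda)]\|<\infty$ (which does hold: the discrete derivative of $\rho_k$ is $O(2^{-k})$ while the coefficients of $A$ are $O(2^{k})$ on $\operatorname{supp}\rho_k$), then shows each block is Lipschitz along the $A$-flow with constant $\lesssim\|T_k\|$ and sums, the hypothesis guaranteeing $\sum_k\|T_k\|<\infty$. If you want to keep your continuous (Calder\'on) decomposition, you must replace the rescaling identity by an estimate of this type, i.e.\ bound $\|[A,\xi(\Lambda/t)]\,T\,\xi(\Lambda/t)\|$ directly by $C\,\|\tilde\xi(\Lambda/t)\,T\|$ for a fattened cutoff $\tilde\xi$, rather than trying to push the cutoff through the flow.
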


\begin{proof}
Fix a dyadic partition $\sum_{k\in\mathbb{Z}}\rho_k(\Lambda)=\id$ with $\rho_k(\lambda)=\rho(2^{-k}\lambda)$, $\rho\in C_c^\infty((1/2,2))$, and set $T_k:=\rho_k(\Lambda)\,T\,\rho_k(\Lambda)$. The assumption implies $\sum_k\|T_k\|\lesssim\int_1^\infty\|\xi(\Lambda/t)T\|\,dt/t<\infty$. Since $A_{\mathbb{Z}^d,\vec{\mathbf{r}}}$ is first order, $\sup_k\|[A_{\mathbb{Z}^d,\vec{\mathbf{r}}},\rho_k(\Lambda)]\|<\infty$. Hence
\[
[A_{\mathbb{Z}^d,\vec{\mathbf{r}}},T_k]=[A_{\mathbb{Z}^d,\vec{\mathbf{r}}},\rho_k(\Lambda)]\,T\,\rho_k(\Lambda)+\rho_k(\Lambda)\,T\,[A_{\mathbb{Z}^d,\vec{\mathbf{r}}},\rho_k(\Lambda)],
\]
so $\|[A_{\mathbb{Z}^d,\vec{\mathbf{r}}},T_k]\|\lesssim\|T_k\|$. By Duhamel,
\(
\|e^{\mathrm i sA_{\mathbb{Z}^d,\vec{\mathbf{r}}}}T_ke^{-\mathrm i sA_{\mathbb{Z}^d,\vec{\mathbf{r}}}}-T_k\|\le |s|\int_0^1\|[A_{\mathbb{Z}^d,\vec{\mathbf{r}}},T_k]\|\,d\tau\lesssim |s|\,\|T_k\|.
\)
Summing over $k$ gives $\sup_{0<|s|\le1}\|e^{\mathrm i sA}Te^{-\mathrm i sA}-T\|/|s|<\infty$.
\end{proof}

\begin{remark}
This is the standard Lipschitz regularity criterion in Mourre theory (see \cite{ABG}). The proof is adapted to the discrete setting and avoids assuming $[A_{\mathbb{Z}^d,\vec{\mathbf{r}}},T]$ exists a priori by localizing in position.
\end{remark}

\begin{remark}
If $T=T^*$ and $\xi_k(\Lambda)=\xi_k(\Lambda)^*$, then $\|T\,\xi_k(\Lambda)\|=\|\xi_k(\Lambda)\,T\|$. Choosing $\rho\le C\,\xi$ (functional calculus) yields
\[
\sum_k\|\rho_k(\Lambda)T\rho_k(\Lambda)\|
\ \le\ C^2\sum_k\|\xi_k(\Lambda)T\|<\infty,
\]
which is the estimate used in the proof.
\end{remark}

\begin{corollary}\label{cor:hypothesis1}
Let $\varepsilon\in(0,1)$ and $T\in\mathcal{B}(\ell^2(\mathbb{Z}^d))$ be symmetric.
If $\langle\Lambda(Q)\rangle^{\varepsilon}T\in\mathcal{B}(\ell^2(\mathbb{Z}^d))$, then $T\in\mathcal{C}^{0,1}(A_{\mathbb{Z}^d,\vec{\mathrm{r}}})$.
\end{corollary}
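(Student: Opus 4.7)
The plan is to reduce Corollary~\ref{cor:hypothesis1} to a direct application of Proposition~\ref{prop:C01-functional} by verifying the dyadic integrability hypothesis with a judicious factorization. Concretely, I would fix a nonnegative cutoff $\xi\in C_c^\infty((0,\infty))$ supported in, say, $[1/2,2]$, and check that
\[
\int_{1}^{\infty}\Bigl\|\,\xi\!\Bigl(\tfrac{\Lambda(Q)}{t}\Bigr)\,T\,\Bigr\|\,\frac{dt}{t}<\infty.
\]

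The core step is the factorization
\[
\xi\!\Bigl(\tfrac{\Lambda(Q)}{t}\Bigr)\,T
\;=\;\Bigl[\,\xi\!\Bigl(\tfrac{\Lambda(Q)}{t}\Bigr)\,\langle\Lambda(Q)\rangle^{-\varepsilon}\,\Bigr]\;\cdot\;\Bigl[\,\langle\Lambda(Q)\rangle^{\varepsilon}\,T\,\Bigr].
\]
The second bracket is bounded by hypothesis. For the first bracket, since $\Lambda(Q)$ is a (diagonal) multiplication operator by $\Lambda(n)=\sum_{j}\langle n_j\rangle\ge 1$, the Borel functional calculus gives
\[
\Bigl\|\,\xi\!\Bigl(\tfrac{\Lambda(Q)}{t}\Bigr)\,\langle\Lambda(Q)\rangle^{-\varepsilon}\,\Bigr\|
\;\le\;\sup_{\lambda\in\mathrm{supp}(\xi)\cdot t}\,|\xi(\lambda/t)|\,\langle\lambda\rangle^{-\varepsilon}
\;\le\;C_\xi\,(t/2)^{-\varepsilon}\;\lesssim\;t^{-\varepsilon},
\]
uniformly for $t\ge 1$, because on the support of $\xi(\Lambda(Q)/t)$ one has $\Lambda(Q)\ge t/2$.

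Combining these two estimates yields $\|\xi(\Lambda(Q)/t)\,T\|\lesssim t^{-\varepsilon}\,\|\langle\Lambda(Q)\rangle^{\varepsilon}T\|$, and since $\varepsilon>0$,
\[
\int_{1}^{\infty}\bigl\|\xi(\Lambda(Q)/t)\,T\bigr\|\,\frac{dt}{t}
\;\lesssim\;\int_{1}^{\infty}t^{-1-\varepsilon}\,dt\;<\;\infty.
\]
Proposition~\ref{prop:C01-functional} then gives $T\in\mathcal C^{0,1}(A_{\Z^d,\vec{\mathbf r}})$, as claimed.

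I do not foresee any real obstacle: the restriction $\varepsilon<1$ is only there because for $\varepsilon\ge 1$ one would get the stronger class $\mathcal C^{1}$ (or even better) essentially for free, while the whole point of the corollary is to provide a clean $\mathcal C^{0,1}$ criterion usable for the long-range potentials $W(Q)$ appearing in Theorem~\ref{thm:main}. The only minor subtlety is ensuring that $\Lambda(Q)\ge 1$ (so that $\langle\Lambda(Q)\rangle^{-\varepsilon}$ is comparable to $\Lambda(Q)^{-\varepsilon}$ on the support of $\xi(\Lambda(Q)/t)$), which is immediate from the definition $\Lambda(n)=\sum_j\langle n_j\rangle\ge d\ge 1$.
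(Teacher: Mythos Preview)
Your proof is correct and follows exactly the same approach as the paper: factor through $\langle\Lambda(Q)\rangle^{-\varepsilon}\langle\Lambda(Q)\rangle^{\varepsilon}$, use the support of $\xi(\cdot/t)$ to bound the first factor by $C\,t^{-\varepsilon}$, and feed the resulting integrable bound into Proposition~\ref{prop:C01-functional}. The paper's proof is just a terser version of what you wrote.
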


\begin{proof}
Pick $\xi\in C_c^\infty((0,\infty))$ with $\xi\equiv1$ on $[1,2]$ and $\mathrm{supp}\,\xi\subset[\tfrac12,4]$.
Then $\|\xi(\Lambda(Q)/t)T\|\lesssim t^{-\varepsilon}\|\langle\Lambda(Q)\rangle^{\varepsilon}T\|$ for $t\ge1$, so Proposition~\ref{prop:C01-functional} applies.
\end{proof}
\begin{lemma}\label{lem:interpolation_weights}
For every $s\in[0,1]$ there exist constants $C_s>0$ and $c_s>0$ such that for all $f\in\mathcal{C}_c(\mathbb{Z}^d)$,
\begin{equation}\label{eq:interp}
\|\langle A_{\mathbb{Z}^d,\vec{\mathrm{r}}}\rangle^s f\|_{\ell^2(\mathbb{Z}^d)}\le C_s\|\Lambda^s f\|_{\ell^2(\mathbb{Z}^d)},\qquad
\|\Lambda^s f\|_{\ell^2(\mathbb{Z}^d)}\le c_s\|\langle A_{\mathbb{Z}^d,\vec{\mathrm{r}}}\rangle^s f\|_{\ell^2(\mathbb{Z}^d)}.
\end{equation}
\end{lemma}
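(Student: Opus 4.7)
The plan is to prove the norm equivalence at the two integer endpoints $s=0$ and $s=1$, and then extend to the full range $s\in(0,1)$ by complex interpolation. For $s=0$ both inequalities are trivial with $C_0=c_0=1$; for $s=1$ I use $\|\langle A_{\Z^d,\vec{\mathbf r}}\rangle f\|^2=\|f\|^2+\|A_{\Z^d,\vec{\mathbf r}}f\|^2$ and treat the two directions separately.

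For the upper bound at $s=1$, starting from \eqref{bb} and using the commutation $[U_j,Q_j]=U_j$ one obtains the explicit decomposition
\[
A_{\Z,r_j}\;=\;\mathrm{sign}(r_j)\,(Q_j+\tfrac12)\,P_j\;-\;\tfrac{\mathrm i\,\mathrm{sign}(r_j)}{2}\,U_j,\qquad P_j:=-\tfrac{\mathrm i}{2}(U_j-U_j^*),
\]
with $P_j$ bounded self-adjoint of norm $\le 1$. Hence $\|A_{\Z,r_j}f\|\lesssim \|\langle Q_j\rangle f\|$; summing over $j$ via Cauchy--Schwarz together with the form inequality $\sum_j\langle Q_j\rangle^2\le\Lambda^2$ (whose cross terms are nonnegative) yields $\|A_{\Z^d,\vec{\mathbf r}}f\|\lesssim\|\Lambda f\|$. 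Since $\Lambda\ge d\ge 1$, $\|f\|\le\|\Lambda f\|$, and the desired bound $\|\langle A_{\Z^d,\vec{\mathbf r}}\rangle f\|\le C_1\|\Lambda f\|$ follows.

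For the lower bound at $s=1$, the goal is the quadratic-form estimate $\Lambda^2\le c_1^2(I+A_{\Z^d,\vec{\mathbf r}}^{\,2})$ on $\Cc_c(\Z^d)$. I would first prove the one-dimensional analogue $\langle Q_j\rangle^2\lesssim I+A_{\Z,r_j}^{\,2}$ on $\Cc_c(\Z)$ by working in the Fourier representation on $\T$: up to a bounded zeroth-order term, $A_{\Z,r_j}$ acts as the symmetric first-order differential operator $\mathrm i\sin\theta_j\,\partial_{\theta_j}+\tfrac{\mathrm i}{2}\cos\theta_j$, while $\langle Q_j\rangle$ corresponds to $\langle\mathrm i\partial_{\theta_j}\rangle$. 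A smooth partition of unity $\chi+(1-\chi)$ on $\T$ with $\chi$ supported in small neighbourhoods of the zeros $\{0,\pi\}$ of $\sin$ splits the comparison into an elliptic region where $|\sin\theta_j|\ge c>0$ and Hardy-type reasoning gives $\langle Q_j\rangle^2\lesssim A_{\Z,r_j}^{\,2}+I$, and a near-threshold region whose contribution is absorbed into the $I$-term by a compactness/commutator clean-up. Lifting to the tensor-product setting exploits the pairwise commutation of the $A_{\Z,r_j}$'s together with a joint spectral reduction and the elementary bound $\Lambda^2\le d\sum_j\langle Q_j\rangle^2$. Finally, complex interpolation (Hadamard's three-lines lemma) applied to the analytic operator-valued families $z\mapsto\Lambda^{-z}\langle A_{\Z^d,\vec{\mathbf r}}\rangle^z$ and $z\mapsto\langle A_{\Z^d,\vec{\mathbf r}}\rangle^{-z}\Lambda^z$ on $\{0\le\mathrm{Re}\,z\le 1\}$ transfers the endpoint bounds to every $s\in[0,1]$.

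The main obstacle is the lower bound at $s=1$: whereas the one-dimensional equivalence $\langle Q_j\rangle\sim\langle A_{\Z,r_j}\rangle$ is conceptually classical (cf.\ \cite{GG,AF,Mic}), here it must be proved in the presence of the degeneracy of the conjugate symbol $\sin\theta_j$ at the band-edge frequencies, and then lifted to a genuinely $d$-dimensional inequality $\Lambda^2\lesssim I+A_{\Z^d,\vec{\mathbf r}}^{\,2}$ despite the fact that the cross-directions of $A_{\Z^d,\vec{\mathbf r}}=\sum_j A_{\Z,r_j}$ can produce cancellations invalidating a naive summation of one-dimensional bounds. It is this interplay -- near-threshold degeneracy combined with cross-directional cancellations -- that is the delicate point of the argument.
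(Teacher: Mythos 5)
Your endpoint-plus-interpolation architecture and your treatment of the first inequality are sound and essentially coincide with the paper's (which also establishes $\|A_{\Z^d,\vec{\mathbf r}}f\|\lesssim\|\Lambda f\|$ from the first-order structure and then interpolates via Heinz--Kato). The genuine gap is in the second inequality at $s=1$, and it is not a repairable technicality: the estimate $\|\Lambda f\|\le c_1\|\langle A_{\Z^d,\vec{\mathbf r}}\rangle f\|$ is \emph{false}, so the step where you propose to absorb the near-threshold frequency region into the $I$-term ``by a compactness/commutator clean-up'' cannot be carried out. Concretely, in $d=1$ take $f_N:=N^{-1/2}\mathbf{1}_{\lint N,2N\rint}$. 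Then $\|f_N\|\approx 1$ and $\|\Lambda f_N\|\ge\|\langle Q\rangle f_N\|\approx N$, while from
\[
(A_{\Z,r}f)(n)=-\tfrac{\mathrm i\,\mathrm{sign}(r)}{2}\Bigl[(n-\tfrac12)f(n-1)-(n+\tfrac12)f(n+1)\Bigr]
\]
one sees that $A_{\Z,r}f_N$ has size $\sim\tfrac12 N^{-1/2}$ at the $\sim N$ interior points and $\sim\tfrac12 N^{1/2}$ at the four edge points, so $\|\langle A_{\Z,r}\rangle f_N\|\approx\sqrt N$. The ratio $\|\Lambda f_N\|/\|\langle A_{\Z,r}\rangle f_N\|\approx\sqrt N\to\infty$, and since $\|\langle A\rangle^{s}f_N\|\le\|\langle A\rangle f_N\|^{s}\|f_N\|^{1-s}\approx N^{s/2}$ while $\|\Lambda^{s}f_N\|\approx N^{s}$, the second inequality fails for every $s\in(0,1]$. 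Tensoring $f_N$ with $\delta_0$ in the remaining coordinates gives the same failure in $\ell^2(\Z^d)$. This is exactly the degeneracy you flagged: $\widehat{f_N}$ concentrates where $\sin\theta$ vanishes, and there the weight $\langle Q\rangle=\langle\mathrm i\partial_\theta\rangle$ is an unbounded operator that cannot be dominated by $I$ plus the degenerate first-order operator, no matter how the partition of unity is arranged. Your correct instinct that this is ``the delicate point'' should have been pushed to the conclusion that it is an obstruction, not a difficulty.

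For comparison, the paper's own one-line justification of the second inequality --- ``ellipticity of $A_{\Z^d,\vec{\mathbf r}}$ in configuration space: for $|n|$ large, $c\,\Lambda(n)\le\langle A_{\Z^d,\vec{\mathbf r}}\rangle(n)\le C\,\Lambda(n)$'' --- treats $\langle A_{\Z^d,\vec{\mathbf r}}\rangle$ as if it were a multiplication operator with a pointwise value at $n$, which it is not; it founders on the same counterexample. So you have not missed a trick available in the paper. What \emph{is} salvageable, and what the applications of this lemma actually need (converting the $\langle A\rangle^{-s}$-weighted LAP into a $\langle\Lambda(Q)\rangle^{-s}$-weighted one), is only the boundedness of $\langle A_{\Z^d,\vec{\mathbf r}}\rangle^{s}\langle\Lambda(Q)\rangle^{-s}$, i.e.\ your first inequality; the reverse comparison should be dropped or replaced by a statement of that weaker, one-sided form.
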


\begin{proof}
$A_{\mathbb{Z}^d,\vec{\mathrm{r}}}$ is first order with coefficients linear in $Q$, so
$\|A_{\mathbb{Z}^d,\vec{\mathrm{r}}} f\|\lesssim \|\Lambda f\|+\|f\|$ on $\mathcal{C}_c(\mathbb{Z}^d)$. By Heinz-Kato and interpolation between $s=0,1$, the first inequality follows. The second comes from the ellipticity of $A_{\mathbb{Z}^d,\vec{\mathrm{r}}}$ in configuration space: for $|n|$ large, $c\,\Lambda(n)\le \langle A_{\mathbb{Z}^d,\vec{\mathrm{r}}}\rangle(n)\le C\,\Lambda(n)$; a partition of unity and interpolation finish the proof.
\end{proof}

\begin{remark}\label{rem:negative-rj}
Proposition~\ref{prop:C01-functional} is purely spatial (it only uses $\Lambda(Q)$ and that $A_{\mathbb{Z}^d,\vec{\mathrm{r}}}$ is first order), hence it does not depend on the exponents $r_j$.
By contrast, the analysis of commutators for $\Delta_{\mathbb{Z}^d}^{\vec{\mathrm{r}}}$ does depend on $\vec{\mathrm{r}}$, especially near thresholds, where the symbol-level factors (e.g.\ $\lambda^{\,r_j-1}$) may degenerate. This motivates the localization in energy used below.
\end{remark}

\begin{lemma}\label{lem:W-C11-weighted}
Let $W=W(Q)$ be a bounded real multiplication on $\ell^2(\mathbb{Z}^d)$.
Assume that \textbf{(H0)} and \textbf{(H1)} hold. Then $W\in \Cc^{1}(A_{\mathbb{Z}^d,\vec{\mathrm r}})$ and
$[W(Q),\rmi A_{\mathbb{Z}^d,\vec{\mathrm r}}]_{\circ}\in \Cc^{0,1}(A_{\mathbb{Z}^d,\vec{\mathrm r}})$.
In particular, $W\in \Cc^{1,1}(A_{\mathbb{Z}^d,\vec{\mathrm r}})$.
\end{lemma}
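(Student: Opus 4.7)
The plan is to verify the two intermediate assertions and then invoke the standard ABG criterion that $S\in\Cc^1(A)$ with $[S,\rmi A]_\circ\in\Cc^{0,1}(A)$ forces $S\in\Cc^{1,1}(A)$ (see \cite[Chap.~5]{ABG}). I will first compute the form commutator $[W(Q),\rmi A_{\Z^d,\vec{\mathrm r}}]$ explicitly on $\Cc_c(\Z^d)$, show via (H1) that it extends to a bounded operator $T$, giving $W\in\Cc^1(A_{\Z^d,\vec{\mathrm r}})$, and then apply Corollary~\ref{cor:hypothesis1} to promote $T$ to $\Cc^{0,1}(A_{\Z^d,\vec{\mathrm r}})$ by bounding $\langle\Lambda(Q)\rangle^{\varepsilon}T$ for some $\varepsilon>0$.

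Since $A_{\Z^d,\vec{\mathrm r}}=\sum_{j=1}^{d}A_{\Z,r_j}$, it suffices to compute each one-dimensional piece. Using $[W(Q),U_j]=-(\partial_j W)(Q)U_j$, $[W(Q),U_j^{*}]=(\partial_j^{*}W)(Q)U_j^{*}$, $[W(Q),Q_j]=0$, and $U_jQ_j=(Q_j+1)U_j$, one finds on $\Cc_c(\Z^d)$
\[
[W(Q),\rmi A_{\Z,r_j}]\ =\ -\tfrac{\mathrm{sign}(r_j)}{2}\bigl\{(\partial_j W)(Q)(Q_j+\tfrac{3}{2})\,U_j\ +\ (Q_j+\tfrac{1}{2})(\partial_j^{*}W)(Q)\,U_j^{*}\bigr\},
\]
where $(\partial_j W)(n):=W(n+e_j)-W(n)$ and $(\partial_j^{*}W)(n):=W(n)-W(n-e_j)$. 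This collects as $B_j^{+}(Q)U_j+B_j^{-}(Q)U_j^{*}$ for bounded real multipliers $B_j^{\pm}$. The decisive input of (H1) yields the pointwise bound
\[
|B_j^{\pm}(n)|\ \le\ C\,\bigl(|(\partial_j W)(n)|+|(\partial_j^{*}W)(n)|\bigr)\,\langle n_j\rangle\ \le\ C'\,\Lambda(n)^{-\delta},
\]
in which the $\langle n_j\rangle$ stemming from $Q_j+\tfrac{k}{2}$ is exactly absorbed by the $\langle n_j\rangle^{-1}$ of (H1). Summing over $j$,
\[
T\ :=\ [W(Q),\rmi A_{\Z^d,\vec{\mathrm r}}]_\circ\ =\ \sum_{j=1}^{d}\bigl(B_j^{+}(Q)U_j+B_j^{-}(Q)U_j^{*}\bigr)\ \in\ \Bc(\ell^2(\Z^d)),
\]
so $W\in\Cc^1(A_{\Z^d,\vec{\mathrm r}})$; self-adjointness of $T$ is automatic from $W^{*}=W$ and $A_{\Z^d,\vec{\mathrm r}}^{*}=A_{\Z^d,\vec{\mathrm r}}$. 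Choosing any $\varepsilon\in(0,\min(1,\delta))$, the same estimate gives $\langle\Lambda(n)\rangle^{\varepsilon}|B_j^{\pm}(n)|\le C\,\Lambda(n)^{\varepsilon-\delta}\in\ell^{\infty}(\Z^d)$; unitarity of $U_j$ then yields $\langle\Lambda(Q)\rangle^{\varepsilon}T\in\Bc(\ell^2(\Z^d))$, and Corollary~\ref{cor:hypothesis1} concludes $T\in\Cc^{0,1}(A_{\Z^d,\vec{\mathrm r}})$. The ABG criterion then delivers the ``in particular'' clause $W\in\Cc^{1,1}(A_{\Z^d,\vec{\mathrm r}})$.

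The only delicate step is the bookkeeping in the commutator expansion: one must verify that the $\langle n_j\rangle$ produced by the first-order factor $(Q_j+\tfrac12)$ in $A_{\Z,r_j}$ is exactly cancelled by the directional decay $\langle n_j\rangle^{-1}$ in (H1), leaving a net $\Lambda(n)^{-\delta}$ decay. Everything else is functional-analytic: Corollary~\ref{cor:hypothesis1} is purely spatial and insensitive to the exponents $\vec{\mathrm r}$ (cf.\ Remark~\ref{rem:negative-rj}), and (H0) plays no role in the regularity statement---it will enter later to ensure compactness of $W(Q)$ in the Mourre perturbation argument.
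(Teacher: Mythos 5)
Your proposal is correct and follows essentially the same route as the paper: an explicit computation of $[W(Q),\rmi A_{\Z,r_j}]$ as shifts multiplied by coefficients of the form $(n_j+\mathrm{const})\,(W(n\pm e_j)-W(n))$, absorption of the $\langle n_j\rangle$ weight by the $\langle n_j\rangle^{-1}$ factor in \textbf{(H1)} to get boundedness (hence $\Cc^1$), and then the weighted bound $\langle\Lambda(Q)\rangle^{\varepsilon}T\in\Bc(\ell^2)$ fed into Corollary~\ref{cor:hypothesis1} to get $\Cc^{0,1}$ of the commutator and thus $\Cc^{1,1}$. The only discrepancy is the half-integer constants ($\tfrac32$ vs.\ $\tfrac12$) coming from the ordering of $U_j$ and $Q_j$, which is immaterial to the estimates.
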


\proof
Recalling \eqref{bb}. We show the lemma in two steps.

\emph{Step 1: $W(Q)\in \Cc^{1}(A_{\Z^d,\vec{\mathrm r}})$.}
It suffices to prove that there exists $c>0$ such that
\[
\big\|\,[W(Q),\rmi A_{\Z^d,\vec{\mathrm r}}]f\,\big\|^2\ \le\ c\,\|f\|^2,\qquad \forall f\in\Sc.
\]
Write $A_{\Z^d,\vec{\mathrm r}}=\sum_{j=1}^d \mathrm{sign}(r_j)\,A_{\Z,r_j}^{(j)}$ with $A_{\Z,r_j}^{(j)}$
the one-dimensional conjugate operator acting along $e_j$ (see \eqref{bb}).
A direct computation on $\Sc(\Z^d)$ gives, for each $j$,
\[
\big([W,\rmi A_{\Z,r_j}^{(j)}] f\big)(n)
=\frac{\mathrm{sign}(r_j)}{2}\Big[(n_j+\tfrac12)\big(W(n+e_j)-W(n)\big)f(n+e_j)
+(n_j-\tfrac12)\big(W(n-e_j)-W(n)\big)f(n-e_j)\Big].
\]
By \textbf{(H1)},
\(
\big|(n_j\pm\tfrac12)\big(W(n\pm e_j)-W(n)\big)\big|
\lesssim \Lambda(n)^{-\delta}.
\)
Hence each summand is a shift composed with a bounded multiplication by a coefficient
$c_{j,\pm}(n)$ satisfying $\sup_n |c_{j,\pm}(n)|<\infty$, so
\(
\|[W,\rmi A_{\Z,r_j}^{(j)}]f\|\ \le\ C\,\|f\|
\)
for all $f\in\Sc$. Summing over $j$ yields
\(
\|[W,\rmi A_{\Z^d,\vec{\mathrm r}}]f\|\le C\|f\|
\),
and by density and \cite[Lemma~6.2.9]{ABG} we obtain $W\in \Cc^{1}(A_{\Z^d,\vec{\mathrm r}})$.

\emph{Step 2: $[W(Q),\rmi A_{\Z^d,\vec{\mathrm r}}]_{\circ}\in \Cc^{0,1}(A_{\Z^d,\vec{\mathrm r}})$.}
Let $\varepsilon'\in[0,1)$ with $\varepsilon'<\delta$ (from \textbf{(H1)}). Working on $\Sc(\Z^d)$ and using the
same decomposition as above,
\[
\big\|\Lambda^{\varepsilon'}(Q)\,[W,\rmi A_{\Z^d,\vec{\mathrm r}}]f\big\|
\ \le\ \sum_{j=1}^d \Big\|
\Lambda^{\varepsilon'}(Q)\,c_{j,+}(Q)\,U_j f\Big\|
+\sum_{j=1}^d \Big\|\Lambda^{\varepsilon'}(Q)\,c_{j,-}(Q)\,U_j^* f\Big\|.
\]
Since $|c_{j,\pm}(n)|\lesssim \Lambda(n)^{-\delta}$ and
$\Lambda^{\varepsilon'}(n)\,\Lambda(n)^{-\delta}\le C\,\Lambda(n)^{-(\delta-\varepsilon')}$ with
$\delta-\varepsilon'>0$, each multiplication $\Lambda^{\varepsilon'}(Q)c_{j,\pm}(Q)$ is bounded on
$\ell^2(\Z^d)$. The shifts $U_j, U_j^*$ are unitary, hence
\[
\big\|\Lambda^{\varepsilon'}(Q)\,[W,\rmi A_{\Z^d,\vec{\mathrm r}}]f\big\|
\ \le\ C_{\varepsilon'}\,\|f\|,\qquad \forall f\in\Sc.
\]
Invoking Corollary~\ref{cor:hypothesis1} with $T=[W,\rmi A_{\Z^d,\vec{\mathrm r}}]$ yields
$[W,\rmi A_{\Z^d,\vec{\mathrm r}}]_{\circ}\in \Cc^{0,1}(A_{\Z^d,\vec{\mathrm r}})$.
Combining Steps~1--2 gives $W\in \Cc^{1,1}(A_{\Z^d,\vec{\mathrm r}})$.
\qed

\begin{lemma}[\cite{ABG,GJ1}]\label{lem:C2loc_implies_C11loc}
Let $H,A$ be self-adjoint on $\mathcal{H}$. If for some compact $\mathcal{J}\Subset\sigma(H)^\circ$,
\[
E_\mathcal{J}(H)\,[[H,\mathrm{i}A],\mathrm{i}A]\,E_\mathcal{J}(H)\in\mathcal{B}(\mathcal{H}),
\]
then $H$ is locally of class $\mathcal{C}^{1,1}(A)$ on $\mathcal{J}$:
\[
\int_0^1 \Big\|\, E_\mathcal{J}(H)\Big(e^{\mathrm{i}sA}He^{-\mathrm{i}sA}-2H+e^{-\mathrm{i}sA}He^{\mathrm{i}sA}\Big)E_\mathcal{J}(H)\,\Big\|\,
\frac{ds}{s^2}<\infty.
\]
\end{lemma}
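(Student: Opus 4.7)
The plan is to derive the integrability from Taylor's formula applied to the operator-valued map $s\mapsto W(s):=e^{\mathrm{i}sA}He^{-\mathrm{i}sA}$, whose formal second derivative at zero is exactly $-R$ with $R:=[[H,\mathrm{i}A],\mathrm{i}A]$. To compensate for the unboundedness of $H$ and the failure of commutation between $E_\mathcal{J}(H)$ and $e^{\mathrm{i}uA}$, I would first enlarge the energy window: pick $\mathcal{J}_1\Subset\sigma(H)^\circ$ with $\mathcal{J}\Subset\mathcal{J}_1$, together with smooth cutoffs $\chi,\chi_1\in C_c^\infty(\mathbb{R})$ satisfying $\chi\equiv1$ on a neighborhood of $\mathcal{J}$, $\chi_1\equiv1$ on $\mathrm{supp}\,\chi$, and $\mathrm{supp}\,\chi_1\subset\mathcal{J}_1$. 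Then $E_\mathcal{J}(H)=\chi(H)E_\mathcal{J}(H)$, the operator $\chi(H)H$ is bounded, and by the Helffer--Sj\"ostrand formula the hypothesis upgrades to $\chi_j(H)R\chi_j(H)\in\mathcal{B}(\mathcal{H})$ for $j\in\{0,1\}$, yielding in particular $\chi(H),\chi_1(H)\in\mathcal{C}^1(A)$. The problem thus reduces to establishing $\|\chi(H)(W(s)-2H+W(-s))\chi(H)\|\le Cs^2$ for small $s$.

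Applying Taylor's formula with integral remainder to the $\mathcal{B}(\mathcal{H})$-valued map $u\mapsto e^{\mathrm{i}uA}\chi(H)He^{-\mathrm{i}uA}\chi(H)$ yields
\[
\chi(H)\bigl(W(s)+W(-s)-2H\bigr)\chi(H)
=-\int_0^{s}(s-u)\,\chi(H)\bigl(e^{\mathrm{i}uA}R e^{-\mathrm{i}uA}+e^{-\mathrm{i}uA}R e^{\mathrm{i}uA}\bigr)\chi(H)\,du,
\]
so the task reduces to bounding $\|\chi(H)e^{\pm\mathrm{i}uA}R e^{\mp\mathrm{i}uA}\chi(H)\|$ uniformly on $|u|\le1$. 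The main obstacle is that $\chi(H)$ does not commute with $e^{\mathrm{i}uA}$, so the hypothesis cannot be applied directly to the sandwiched expression.

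I would resolve this by pushing $\chi(H)$ through the unitary group via
\[
\chi(H)e^{\mathrm{i}uA}=e^{\mathrm{i}uA}\chi_1(H)\chi(H)+\bigl[\chi(H),e^{\mathrm{i}uA}\bigr],
\qquad
\bigl\|[\chi(H),e^{\mathrm{i}uA}]\bigr\|\le C|u|,
\]
where the commutator estimate follows from $\chi(H)\in\mathcal{C}^1(A)$ via the Duhamel identity $e^{\mathrm{i}uA}\chi(H)e^{-\mathrm{i}uA}-\chi(H)=\mathrm{i}\int_0^u e^{\mathrm{i}vA}[A,\chi(H)]e^{-\mathrm{i}vA}\,dv$. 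The leading term then becomes $e^{\mathrm{i}uA}\chi_1(H)R\chi_1(H)e^{-\mathrm{i}uA}$, bounded by the enlarged hypothesis, while the commutator remainder contributes $O(|u|)$, which after integration against the weight $(s-u)$ is harmless. Assembling these bounds gives $\|E_\mathcal{J}(H)(W(s)+W(-s)-2H)E_\mathcal{J}(H)\|\le Cs^2$ on $(0,1]$, so the integrand $\|\cdot\|/s^2$ is uniformly bounded and the integral converges. This is the localized counterpart of the classical inclusion $\mathcal{C}^2(A)\hookrightarrow\mathcal{C}^{1,1}(A)$ of Amrein--Boutet de Monvel--Georgescu, with the smooth cutoffs $\chi,\chi_1$ absorbing the failure of commutation between the spectral projection and the Mourre group.
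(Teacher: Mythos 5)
The paper itself offers no proof of this lemma---it is quoted from \cite{ABG,GJ1}---so your attempt can only be measured against the standard arguments in those references. Your overall skeleton (second--order Taylor expansion of $s\mapsto e^{\mathrm{i}sA}He^{-\mathrm{i}sA}$, plus smooth energy cutoffs to absorb the non-commutation of $E_{\mathcal J}(H)$ with the group $e^{\mathrm{i}sA}$) is indeed the right one, but the execution has a genuine gap at the decisive step.

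After you push $\chi(H)$ through the unitary group, the quantity $\chi(H)e^{\mathrm{i}uA}R\,e^{-\mathrm{i}uA}\chi(H)$ splits into the main term $e^{\mathrm{i}uA}\chi(H)R\chi(H)e^{-\mathrm{i}uA}$, which the hypothesis does control, and cross terms of the form $[\chi(H),e^{\mathrm{i}uA}]\,R\,e^{-\mathrm{i}uA}\chi(H)$. You dismiss the latter as $O(|u|)$ using $\|[\chi(H),e^{\mathrm{i}uA}]\|\le C|u|$, but that tacitly multiplies this bound by a norm of $R\,e^{-\mathrm{i}uA}\chi(H)$ --- and $R=[[H,\mathrm{i}A],\mathrm{i}A]$ is only assumed bounded after sandwiching by $E_{\mathcal J}(H)$ on \emph{both} sides. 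With the energy localization on one side only, the hypothesis gives nothing: the product of a small bounded operator with an unbounded form is neither small nor, a priori, defined. This is exactly the obstruction that forces the proofs in \cite{ABG,GJ1} (and in Sahbani's localized version) to work throughout with the bounded operator $\varphi(H)$, $\varphi\in C_c^\infty(\mathcal J)$ --- i.e.\ to establish $\varphi(H)\in\mathcal C^2(A)\subset\mathcal C^{1,1}(A)$, which is also how the paper's own definition of $\mathcal C^{1,1}_{\mathrm{loc}}$ is phrased --- rather than to manipulate $e^{\mathrm{i}uA}He^{-\mathrm{i}uA}$ directly; your Taylor step likewise presupposes $\chi(H)H\in\mathcal C^2(A)$, which is not contained in the stated hypothesis. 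Two smaller points: the enlargement of the window to $\mathcal J_1\Supset\mathcal J$ is not licensed by a hypothesis stated only ``for some compact $\mathcal J$'' (nor is $\chi(H)\in\mathcal C^1(A)$ automatic from boundedness of the localized \emph{double} commutator alone), and $W''(0)=+R$ rather than $-R$ --- harmless for the norm estimate, but a sign worth fixing.
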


\begin{theorem}\label{thm:mou}
Let $H_0:=\Delta_{\mathbb{Z}^d}^{\vec{\mathbf{r}}}$ and $H:=H_0+W(Q)$ on $\ell^2(\mathbb{Z}^d)$, where $W$ is bounded, real, and satisfies \textbf{H0}-\textbf{H1}.
Then $H\in \mathcal{C}^{1,1}(A_{\mathbb{Z}^d,\vec{\mathrm{r}}})$ locally on $\sigma(H_0)^\circ$.
Moreover, for every compact interval
\[
\mathcal{I}\Subset\sigma(H_0)^\circ\setminus\Thr(\Delta_{\mathbb{Z}^d}^{\vec{\mathbf{r}}}),
\]
there exist $c_{\mathcal{I}}>0$ and a compact operator $K$ such that
\begin{equation}\label{eq:pert-mourre-Zd}
E_{\mathcal{I}}(H)\,\mathrm{i}[H,A_{\mathbb{Z}^d,\vec{\mathrm{r}}}]_\circ\,E_{\mathcal{I}}(H)\ \ge\ c_{\mathcal{I}}\,E_{\mathcal{I}}(H)\ -\ K .
\end{equation}
\end{theorem}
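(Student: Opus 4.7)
The plan is to split Theorem~\ref{thm:mou} into two stages: first, the localized $\mathcal{C}^{1,1}$-regularity of $H$; second, the perturbed Mourre estimate on $\mathcal{I}$ with a compact remainder.

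For the regularity I combine three ingredients already proved. Proposition~\ref{prop:double-commutator-Zd} gives $H_0\in\mathcal{C}^2_{\mathrm{loc}}(A_{\Z^d,\vec{\mathrm r}})$, which by Lemma~\ref{lem:C2loc_implies_C11loc} yields $H_0\in\mathcal{C}^{1,1}_{\mathrm{loc}}(A_{\Z^d,\vec{\mathrm r}})$ on every compact subinterval of $\sigma(H_0)^\circ$. Lemma~\ref{lem:W-C11-weighted} supplies $W(Q)\in\mathcal{C}^{1,1}(A_{\Z^d,\vec{\mathrm r}})$ globally. Since the class $\mathcal{C}^{1,1}$ is stable under sums (see \cite[Ch.~5]{ABG}) and this stability is inherited by the local version, one concludes $H=H_0+W\in\mathcal{C}^{1,1}_{\mathrm{loc}}(A_{\Z^d,\vec{\mathrm r}})$ locally on $\sigma(H_0)^\circ$.

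For the Mourre estimate, I transfer the strict unperturbed estimate from Theorem~\ref{thm:mourre_Zd} to $H$ modulo compact errors, relying on two compactness inputs. First, hypothesis \textbf{(H0)} implies $W(Q)\in\mathcal{K}(\ell^2(\Z^d))$, since multiplication by a sequence vanishing at infinity is compact on $\ell^2$; hence $H-H_0$ is compact and, by applying Helffer-Sj\"ostrand to the resolvent difference, $\varphi(H)-\varphi(H_0)\in\mathcal{K}(\ell^2(\Z^d))$ for every $\varphi\in C_c^\infty(\R)$. Second, the explicit formula from Step~1 of the proof of Lemma~\ref{lem:W-C11-weighted} writes $[W(Q),\mathrm{i}A_{\Z^d,\vec{\mathrm r}}]_\circ$ as a finite sum $\sum_{j,\pm}c_{j,\pm}(Q)\,U_j^{\pm 1}$ with coefficients satisfying $|c_{j,\pm}(n)|\lesssim\Lambda(n)^{-\delta}$; the decay of these coefficients at infinity makes each summand compact, hence $[W(Q),\mathrm{i}A_{\Z^d,\vec{\mathrm r}}]_\circ\in\mathcal{K}(\ell^2(\Z^d))$.

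Given these two inputs, the standard Mourre perturbation scheme (see \cite[Ch.~7]{ABG}) delivers the desired estimate. Concretely, choose $\varphi\in C_c^\infty(\mathcal{I}')$ with $\mathcal{I}\Subset\mathcal{I}'\Subset\sigma(H_0)^\circ\setminus\Thr(\Delta_{\Z^d}^{\vec{\mathrm r}})$ and $\varphi\equiv 1$ near $\mathcal{I}$; sandwich the $H_0$-Mourre estimate with $\varphi(H)$ (absorbing $\varphi(H)-\varphi(H_0)$ into a compact term) and add the compact operator $\varphi(H)\,[W,\mathrm{i}A_{\Z^d,\vec{\mathrm r}}]_\circ\,\varphi(H)$ to obtain
\[
\varphi(H)\,\mathrm{i}[H,A_{\Z^d,\vec{\mathrm r}}]_\circ\,\varphi(H)\ \ge\ c_\mathcal{I}\,\varphi(H)^2\ -\ K_1
\]
with $K_1\in\mathcal{K}(\ell^2(\Z^d))$; multiplying by $E_\mathcal{I}(H)$ on both sides and using $\varphi(H)E_\mathcal{I}(H)=E_\mathcal{I}(H)$ yields \eqref{eq:pert-mourre-Zd} after enlarging the compact remainder. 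The principal obstacle is the form-theoretic bookkeeping when some $r_j<0$: then $H_0$ is unbounded in those directions and the second commutator $[[H_0,\mathrm{i}A],\mathrm{i}A]$ is only a sesquilinear form a priori; the choice $\mathcal{I}\Subset\sigma(H_0)^\circ\setminus\Thr$ together with Proposition~\ref{prop:double-commutator-Zd} is precisely what guarantees a bounded extension on the relevant spectral subspace, enabling the Helffer-Sj\"ostrand calculus to produce genuinely compact, rather than merely bounded, remainders.
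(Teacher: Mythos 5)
Your proposal is correct and follows essentially the same route as the paper: localized $\mathcal{C}^{1,1}$-regularity from Proposition~\ref{prop:double-commutator-Zd}, Lemma~\ref{lem:W-C11-weighted} and Lemma~\ref{lem:C2loc_implies_C11loc}, then transfer of the free Mourre estimate of Theorem~\ref{thm:mourre_Zd} to $H$ using the compactness of $W(Q)$ (hence of $\varphi(H)-\varphi(H_0)$ via Helffer--Sj\"ostrand) and of $[W(Q),\mathrm{i}A_{\Z^d,\vec{\mathrm r}}]_\circ$. The only (harmless) variation is that you obtain $H\in\mathcal{C}^{1,1}_{\mathrm{loc}}$ by invoking sum-stability of the class for a bounded $\mathcal{C}^{1,1}$ perturbation, whereas the paper verifies boundedness of the localized double commutator of $H$ directly and then applies Lemma~\ref{lem:C2loc_implies_C11loc} to $H$ itself.
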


\begin{proof}

By Proposition~\ref{prop:double-commutator-Zd}, $H_0\in\mathcal{C}^2_{\mathrm{loc}}(A_{\mathbb{Z}^d,\vec{\mathrm{r}}})$:
\[
E_\mathcal{J}(H_0)\,[[H_0,\mathrm{i}A_{\mathbb{Z}^d,\vec{\mathrm{r}}}]_{\circ}],\mathrm{i}A_{\mathbb{Z}^d,\vec{\mathrm{r}}}
]\,E_\mathcal{J}(H_0)\in\mathcal{B}(\ell^2(\Z^d)
\]
for all $\mathcal{J}\Subset\sigma(H_0)^\circ$.
By Lemma~\ref{lem:W-C11-weighted} (from \textbf{$H_1$}) and Corollary~\ref{cor:hypothesis1},
$[W,\mathrm{i}A_{\mathbb{Z}^d,\vec{\mathrm{r}}}]\in\mathcal{C}^{0,1}(A_{\mathbb{Z}^d,\vec{\mathrm{r}}})$, hence $[[W,\mathrm{i}A_{\mathbb{Z}^d,\vec{\mathrm{r}}}],A_{\mathbb{Z}^d,\vec{\mathrm{r}}}]$ is bounded after spectral localization in any $\mathcal{J}\Subset\sigma(H)^\circ$.\\

Now, let $\chi\in C_c^\infty(\mathcal{J})$. Since $W(Q)$ is compact on $\ell^2(\mathbb{Z}^d)$ when $W(n)\to0$, the Helffer--Sjstrand functional calculus yields $\chi(H)-\chi(H_0)\in\mathcal{K}(\ell^2(\Z^d))$. Therefore
\[
E_\mathcal{J}(H)\,[[H_0,\mathrm{i}A_{\mathbb{Z}^d,\vec{\mathrm{r}}}],\mathrm{i}A_{\mathbb{Z}^d,\vec{\mathrm{r}}}]\,E_\mathcal{J}(H)\in\mathcal{B}(\ell^2(\Z^d)).
\]
Together with Step~2, Lemma~\ref{lem:C2loc_implies_C11loc} gives $H\in\mathcal{C}^{1,1}_{\mathrm{loc}}(A_{\mathbb{Z}^d,\vec{\mathrm{r}}})$.

Finally, on $\mathcal{I}\Subset\sigma(H_0)^\circ\setminus\Thr(\Delta_{\mathbb{Z}^d}^{\vec{\mathbf{r}}})$ the free estimate holds:
\[
E_\mathcal{I}(H_0)\,\mathrm{i}[H_0,A_{\mathbb{Z}^d,\vec{\mathrm{r}}}]\,E_\mathcal{I}(H_0)\ \ge\ c_{\mathcal{I}}\,E_\mathcal{I}(H_0)
\qquad\text{(Theorem \ref{thm:mourre_Zd})}.
\]
Using again $\chi(H)-\chi(H_0)\in\mathcal{K}(\ell^2(\Z^d))$ and that $[W,\mathrm{i}A_{\mathbb{Z}^d,\vec{\mathrm{r}}}]$ is compact after localization (by \textbf{H1}), one transfers the estimate to $H$ with a compact remainder, which is \eqref{eq:pert-mourre-Zd}.
\end{proof}

\begin{proof}[(i) Absence of point and singular spectrum away from thresholds]
By Theorem~\ref{thm:mou}, $H\in\mathcal{C}^{1,1}_{\mathrm{loc}}(A_{\mathbb{Z}^d,\vec{\mathrm{r}}})$ and a strict localized Mourre estimate holds on $\mathcal{I}$. The abstract Mourre theory (\cite[Thms.~7.4.1-7.4.2]{ABG}) then yields the localized LAP and the absence of eigenvalues and singular continuous spectrum on $\mathcal{I}$.
\end{proof}

\begin{proof}[(ii) Limiting Absorption Principle]
By Lemma~\ref{lem:C2loc_implies_C11loc} and Theorem~\ref{thm:mou}, $H\in\mathcal{C}^{1,1}_{\mathrm{loc}}(A_{\mathbb{Z}^d})$ on $\mathcal{I}$, and a strict localized Mourre estimate holds:
\[
\chi(H)\,[H,\mathrm iA_{\mathbb{Z}^d,\vec{\mathrm{r}}}]_\circ\,\chi(H)\ \ge\ c_{\mathcal{I}}\,\chi(H)^2-\chi(H)K\chi(H),
\quad \chi\in C_c^\infty(\mathcal{I}).
\]
Hence, for every $s>\tfrac12$,
\[
\sup_{\substack{\lambda\in \mathcal{I}\\ \eta\neq 0}}
\big\|\langle A_{\mathbb{Z}^d,\vec{\mathrm{r}}}\rangle^{-s}(H-\lambda-\mathrm i\eta)^{-1}\langle \mathrm iA_{\mathbb{Z}^d,\vec{\mathrm{r}}}\rangle^{-s}\big\|<\infty,
\]
and the boundary values $(H-\lambda\mp\mathrm i0)^{-1}$ exist and are locally continuous as maps
$\langle \mathrm i\rangle^{-s}\ell^2\to\langle  A_{\mathbb{Z}^d,\vec{\mathrm{r}}}\rangle^{s}\ell^2$ (see \cite[Thm.~7.4.1]{ABG}).
\end{proof}
\begin{remark}
We state the LAP and propagation only on compact $\mathcal{I}\Subset \mu^\circ\setminus\Thr(\Delta_{\mathbb{Z}^d}^{\vec{\mathbf{r}}})$ because the commutator with the concrete conjugate operator loses uniform positivity at the spectral edges (group velocity vanishes; for fractional orders, factors like $\lambda^{\,r_j-1}$ amplify the degeneracy). Extending to thresholds requires an edge-adapted strategy.
\end{remark}
\begin{corollary}[Kato smoothness and local decay]\label{cor:local-decay}
Let $s>\tfrac12$ and $\chi\in C_c^\infty(\mathcal{I})$ with $\mathcal{I}$ as above.
Then $\langle A_{\mathbb{Z}^d,\vec{\mathrm{r}}}\rangle^{-s}\chi(H)$ is $H$-smooth on $\mathcal{I}$ and there exists $C<\infty$ such that
\[
\int_{\mathbb{R}}\bigl\|\langle A_{\mathbb{Z}^d,\vec{\mathrm{r}}}\rangle^{-s}\,e^{-{\rm i}tH}\,\chi(H)f\bigr\|^2\,dt\ \le\ C\,\|f\|^2,
\qquad f\in\ell^2(\mathbb{Z}^d).
\]
\end{corollary}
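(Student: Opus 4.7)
The plan is to deduce this corollary as a direct consequence of part (ii) of Theorem~\ref{thm:main} via Kato's theorem on smooth operators. The localized Limiting Absorption Principle already established in (ii) provides, after taking imaginary parts of the boundary-value resolvents, the uniform bound
\[
\sup_{\lambda\in\mathcal{I},\ \eta\neq 0}\bigl\|\langle A_{\mathbb{Z}^d,\vec{\mathrm{r}}}\rangle^{-s}\,\mathrm{Im}(H-\lambda-\mathrm i\eta)^{-1}\,\langle A_{\mathbb{Z}^d,\vec{\mathrm{r}}}\rangle^{-s}\bigr\|<\infty,
\]
which is precisely Kato's criterion for the weight $\langle A_{\mathbb{Z}^d,\vec{\mathrm{r}}}\rangle^{-s}$ to be locally $H$-smooth on $\mathcal{I}$.

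The concrete steps I would carry out are the following. First, I would observe that since $H\in\mathcal{C}^{1,1}_{\mathrm{loc}}(A_{\mathbb{Z}^d,\vec{\mathrm{r}}})$ on $\mathcal{I}$ (Theorem~\ref{thm:mou}) and $\chi\in C_c^\infty(\mathcal{I})$, the Helffer--Sj\"ostrand formula implies that $\chi(H)$ maps $\ell^2(\mathbb{Z}^d)$ into $\mathcal{D}(\langle A_{\mathbb{Z}^d,\vec{\mathrm{r}}}\rangle^{s})$ for every $s\ge 0$, so that $T:=\langle A_{\mathbb{Z}^d,\vec{\mathrm{r}}}\rangle^{-s}\chi(H)$ is a well-defined bounded operator on $\ell^2(\mathbb{Z}^d)$. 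Second, I would invoke Kato's smoothness theorem (\cite[Thm.~XIII.25]{RS}, see also \cite[Ch.~7]{ABG}), which converts the resolvent bound above into the space-time integral estimate
\[
\int_{\mathbb{R}}\bigl\|\langle A_{\mathbb{Z}^d,\vec{\mathrm{r}}}\rangle^{-s}\,e^{-\mathrm itH}\,E_{\mathcal{I}}(H)f\bigr\|^2\,dt\leq C\,\|f\|^2.
\]
Third, using $\mathrm{supp}\,\chi\subset\mathcal{I}$ and hence $E_{\mathcal{I}}(H)\,\chi(H)=\chi(H)$ from the functional calculus, substitution of $f\mapsto\chi(H)f$ yields
\[
\int_{\mathbb{R}}\bigl\|\langle A_{\mathbb{Z}^d,\vec{\mathrm{r}}}\rangle^{-s}\,e^{-\mathrm itH}\,\chi(H)f\bigr\|^2\,dt\leq C\,\|\chi(H)f\|^2\leq C\,\|\chi\|_\infty^2\,\|f\|^2,
\]
which is exactly the claimed inequality and expresses the $H$-smoothness of $\langle A_{\mathbb{Z}^d,\vec{\mathrm{r}}}\rangle^{-s}\chi(H)$.

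The main (and essentially only) obstacle is the handling of the unbounded weight $\langle A_{\mathbb{Z}^d,\vec{\mathrm{r}}}\rangle^{s}$ against the spectral localization $\chi(H)$, so that $T$ is genuinely bounded on the whole space and Kato's criterion can be applied without restriction to a dense subspace. As indicated above, this is resolved by the $\mathcal{C}^{1,1}_{\mathrm{loc}}$ regularity established in Theorem~\ref{thm:mou}, which guarantees that $\chi(H)$ is \emph{smoothing} in the $A_{\mathbb{Z}^d,\vec{\mathrm{r}}}$-scale. All remaining steps are a direct transcription of the abstract Mourre--Kato machinery to the present discrete anisotropic setting; no further analytic input beyond (ii) is needed.
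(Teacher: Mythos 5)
Your proposal is correct and follows essentially the same route as the paper: the uniform LAP bound from (ii) is exactly Kato's resolvent criterion, which immediately yields the space--time estimate, and the paper's own proof is precisely this one-line appeal to \cite[Thm.~7.4.1]{ABG}. One remark: the ``main obstacle'' you identify is not actually there --- since $\langle A_{\mathbb{Z}^d,\vec{\mathrm{r}}}\rangle\ge 1$, the weight $\langle A_{\mathbb{Z}^d,\vec{\mathrm{r}}}\rangle^{-s}$ is already a bounded operator for $s>0$, so $T=\langle A_{\mathbb{Z}^d,\vec{\mathrm{r}}}\rangle^{-s}\chi(H)$ is trivially bounded; moreover, your claim that $\mathcal{C}^{1,1}_{\mathrm{loc}}$ regularity makes $\chi(H)$ map into $\mathcal{D}(\langle A_{\mathbb{Z}^d,\vec{\mathrm{r}}}\rangle^{s})$ for \emph{every} $s\ge 0$ does not follow from that regularity class, but fortunately it is not needed anywhere in the argument.
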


\begin{proof}
This is Kato's criterion applied to the LAP bound above; see \cite[Thm.~7.4.1]{ABG}.
\end{proof}

 \begin{proof}[(iii) Propagation (minimal velocity)]
By the localized LAP and Kato smoothness, for $s>\tfrac12$ and $\varphi\in C_c^\infty(\mathcal{I})$,
\[
\int_{\mathbb{R}}\big\|\langle A_{\mathbb{Z}^d,\vec{\mathrm{r}}}\rangle^{-s}\,e^{-itH}\,\varphi(H)\,f\big\|^2\,dt \;\le\; C\,\|f\|^2.
\]
Applying this with $f=\langle A_{\mathbb{Z}^d,\vec{\mathrm{r}}}\rangle^{-s}\langle \Lambda(Q)\rangle^{s}f$ and using the two-sided weight equivalences from Lemma~\ref{lem:interpolation_weights} yields the usual local decay/propagation estimates with $\langle\Lambda(Q)\rangle^{\pm s}$.
\end{proof}

 \begin{proof}[(iv) Wave operators on interior windows: existence and completeness]
Let $\mathcal{I}\Subset\sigma(H_0)^\circ\setminus\Thr(\Delta_{\mathbb{Z}^d}^{\vec{\mathbf{r}}})$ and choose $\chi\in C_c^\infty(\mathcal{I})$ with $\chi\equiv1$ on a slightly smaller compact interval. By the localized LAP for $H_0$ and $H$, $\langle A_{\mathbb{Z}^d,\vec{\mathrm{r}}}\rangle^{-s}\chi(H_0)$ and $\langle A_{\mathbb{Z}^d,\vec{\mathrm{r}}}\rangle^{-s}\chi(H)$ are $H_0$- and $H$-smooth, respectively (for $s>\tfrac12$). The smooth method (\cite[$\S$7.7]{ABG}) yields the strong limits
\[
\mathcal{W}_\pm(H,H_0;\mathcal{I})=\mathrm{s}\!-\!\lim_{t\to\pm\infty}e^{-\mathrm{i}tH}\,\chi(H_0)\,e^{\mathrm{i}tH_0},
\]
and $\mathrm{Ran}\,\mathcal{W}_\pm(H,H_0;\mathcal{I})=E_\mathcal{I}(H)\,\ell^2(\mathbb{Z}^d)$ because the spectrum of $H$ and $H_0$ is purely a.c.\ on $\mathcal{I}$ (Theorem \ref{thm:main} (i)). Thus the wave operators exist and are asymptotically complete on $\mathcal{I}$.
\end{proof}
\begin{remark}\label{rem:LAP-localized-app}
All statements above are confined to interior windows
$\displaystyle \mathcal{I}\Subset \sigma(H_0)^\circ\setminus\Thr(\Delta_{\mathbb{Z}^d}^{\vec{\mathbf{r}}})$
because the commutator with the concrete conjugate operator $A_{\mathbb{Z}^d,\vec{\mathrm{r}}}$ loses uniform positivity at
thresholds (vanishing group velocity; fractional factors such as $\lambda^{\,r_j-1}$ or $(4-\lambda)$).
Extending up to thresholds requires an edge-adapted strategy.
\end{remark}
\section{Applications}\label{sec:applications}

\subsection{Stationary representation of the scattering matrix on interior windows}
\label{subsec:stationary-S}
In this section we exploit the localized Mourre framework proved above on the full lattice $\mathbb{Z}^d$.
Assume \textbf{(H0)}-\textbf{(H1)}, set
\[
H_0:=\Delta^{\vec{\mathbf{r}}}_{\mathbb{Z}^d},\qquad H:=H_0+W(Q),
\]
and fix a compact window
\[
\mathcal{I}\Subset \sigma(H_0)^\circ\setminus\Thr\!\big(\Delta_{\mathbb{Z}^d}^{\vec{\mathbf{r}}}\big)
\]
on which $H_0,H\in\mathcal{C}^{1,1}_{\mathrm{loc}}(A_{\mathbb{Z}^d,\vec{\mathrm{r}}})$ and a strict localized Mourre estimate holds.
Consequently, the localized LAP for $H_0$ and $H$ on $\mathcal{I}$ follows from \cite{ABG}.
We gather several corollaries valid on every compact window
\(
\mathcal{I}\Subset \sigma(H_0)^\circ\setminus\Thr(\Delta_{\mathbb{Z}^d}^{\vec{\mathbf{r}}}).
\)
Throughout, write
\[
\mathcal{H}_{\pm s}:=\langle A_{\mathbb{Z}^d,\vec{\mathrm{r}}}\rangle^{\mp s}\,\ell^2(\mathbb{Z}^d)\qquad(s>1/2).
\]

\emph{Spectral representation and boundary trace.}
There exists a measurable Hilbert field $\{\Hr_\lambda\}_{\lambda\in \mathcal{I}}$
and a unitary
\[
\mathcal{U}_0:\ E_{\mathcal{I}}(H_0)\ell^2(\mathbb{Z}^d)\longrightarrow \int_{\mathcal{I}}^\oplus \Hr_\lambda\,d\lambda,
\qquad (\mathcal{U}_0 H_0 f)(\lambda)=\lambda\,(\mathcal{U}_0 f)(\lambda).
\]
Define $\Gamma_0(\lambda):E_{\mathcal{I}}(H_0)\ell^2(\mathbb{Z}^d)\to\Hr_\lambda$ by
$\Gamma_0(\lambda)f:=(\mathcal{U}_0 f)(\lambda)$. By Stone's formula and the LAP, for $\chi\in C_c^\infty(\mathcal{I})$ and $s>\frac{1}{2}$,
\begin{equation}\label{eq:stone-app}
\chi(H_0)=\frac{1}{\pi}\int_{\mathcal{I}}\chi(\lambda)\,\Im R_0(\lambda+\mathrm{i}0)\,d\lambda,
\qquad
\Gamma_0(\lambda)^*\Gamma_0(\lambda)=\frac{1}{\pi}\,\Im R_0(\lambda+\mathrm{i}0)
\end{equation}
as bounded maps $\mathcal{H}_s\to\mathcal{H}_{-s}$; here $R_0(z)=(H_0-z)^{-1}$.
\emph{The $T$-operator.}
Let $R(z)=(H-z)^{-1}$. Define
\begin{equation}\label{eq:T-defs-app}
T(z):=W\,(\mathrm{id}- R_0(z)W)^{-1}=W- W R(z) W,
\end{equation}
as bounded maps $\mathcal{H}_s\to\mathcal{H}_{-s}$ for $\Im z\neq0$, with a.e.\ limits $T(\lambda\pm\mathrm{i}0)$ on $\mathcal{I}$ by the localized LAP.
\emph{Wave matrices and the on-shell scattering matrix.}
Let $\chi\in C_c^\infty(\mathcal{I})$ equal $1$ on $\mathcal{I}_0\Subset \mathcal{I}$. The local wave operators
\[
\mathcal{W}_\pm(H,H_0;\mathcal{I})=\mathrm{s}\!-\!\lim_{t\to\pm\infty}e^{-\mathrm{i} tH}\,\chi(H_0)\,e^{\mathrm{i} tH_0}
\]
exist and are complete on $\mathcal{I}$ (\cite[Thm.~7.7.1]{ABG}). In the representation of $H_0$,
$\mathcal{U}_0 \mathcal{W}_\pm \mathcal{U}_0^*$ acts as multiplication by
\begin{equation}\label{eq:Wpm}
W_\pm(\lambda)=\mathrm{id}_{\Hr_\lambda}-2\pi\mathrm{i}\,\Gamma_0(\lambda)\,T(\lambda\pm\mathrm{i}0)\,\Gamma_0(\lambda)^*,
\qquad \text{a.e. }\lambda\in \mathcal{I},
\end{equation}
and the on-shell scattering matrix $S(\lambda):=W_+(\lambda)^*W_-(\lambda)$ satisfies
\begin{equation}\label{eq:S-stationary}
S(\lambda)=\mathrm{id}_{\Hr_\lambda}-2\pi\mathrm{i}\,\Gamma_0(\lambda)\,T(\lambda+\mathrm{i}0)\,\Gamma_0(\lambda)^*,
\qquad \text{a.e. }\lambda\in \mathcal{I}.
\end{equation}

\paragraph{Optical theorem, unitarity, and continuity.}
Assume the localized LAP holds on $\Ic$, i.e.
\[
\sup_{\lambda\in\Ic}\,\big\|\langle A\rangle^{-s}R_0(\lambda\pm\mathrm i0)\langle A\rangle^{-s}\big\|
+\sup_{\lambda\in\Ic}\,\big\|\langle A\rangle^{-s}R(\lambda\pm\mathrm i0)\langle A\rangle^{-s}\big\|<\infty,
\quad s>\tfrac12,
\]
and that $T(z)$ is defined by \eqref{eq:T-defs-app} and admits boundary values
$T(\lambda\pm\mathrm i0)\in\Bc(\Hc_s,\Hc_{-s})$ for a.e.\ $\lambda\in\Ic$.
Let $\Gamma_0(\lambda):\Hc_s\to\Hr_\lambda$ be the boundary trace operator (the
``free Fourier transform on the energy shell'') satisfying Stone's formula
\begin{equation}\label{eq:stone-app-strong}
\frac1{2\pi\mathrm i}\big(R_0(\lambda+\mathrm i0)-R_0(\lambda-\mathrm i0)\big)
=\Gamma_0(\lambda)^*\Gamma_0(\lambda)
\quad \text{in }\Bc(\Hc_s,\Hc_{-s}),\quad s>\tfrac12.
\end{equation}
Using the resolvent identity
\[
R_0(z)-R_0(\bar z)=(z-\bar z)\,R_0(\bar z)R_0(z),
\]
and \eqref{eq:T-defs-app}, one obtains for a.e.\ $\lambda\in\Ic$ the \emph{optical theorem}
\begin{equation}\label{eq:optical-app}
T(\lambda-\mathrm i0)-T(\lambda+\mathrm i0)
=2\pi\mathrm i\,T(\lambda-\mathrm i0)\,\Gamma_0(\lambda)^*\Gamma_0(\lambda)\,T(\lambda+\mathrm i0),
\end{equation}
as a form identity on $\Hc_{-s}$ ($s>\tfrac12$).

Define the stationary wave operators on the shell by
\[
W_\pm(\lambda):=\mathrm{id}_{\Hr_\lambda}
-2\pi\mathrm i\,\Gamma_0(\lambda)\,T(\lambda\pm\mathrm i0)\,\Gamma_0(\lambda)^*
\quad\in\Bc\big(\Hr_\lambda\big),
\]
and the scattering matrix $S(\lambda):=W_+(\lambda)^*W_-(\lambda)$.
Combining \eqref{eq:stone-app-strong}--\eqref{eq:optical-app} yields for a.e.\ $\lambda\in\Ic$
\[
W_\pm(\lambda)^*W_\pm(\lambda)=\mathrm{id}_{\Hr_\lambda},
\qquad
S(\lambda)^*S(\lambda)=S(\lambda)S(\lambda)^*=\mathrm{id}_{\Hr_\lambda},
\]
so $S(\lambda)$ is unitary almost everywhere on $\Ic$.

Finally, by the localized LAP and the continuity of $\Gamma_0(\lambda)$ in $\lambda$ as a map
$\Hc_s\to\Hr_\lambda$, the boundary values $T(\lambda\pm\mathrm i0)$ depend
continuously on $\lambda$ in $\Bc(\Hc_s,\Hc_{-s})$. Hence $\lambda\mapsto S(\lambda)$ is
\emph{strongly continuous} on $\Ic$.

\begin{remark}
It is convenient to factor $T(z)$ as
\[
T(z) = W\,\Gamma(z), \qquad
\Gamma(z):=(\mathrm{id}-R_0(z)W)^{-1}.
\]
In particular,
\[
S(\lambda)=\mathrm{id}-2\pi\mathrm{i}\,\Gamma_0(\lambda)\,W\,\Gamma(\lambda+\mathrm{i}0)\,\Gamma_0(\lambda)^*.
\]
We stress that $\Gamma(\lambda+\mathrm{i}0)$ is \emph{not} the full interacting resolvent
$R(\lambda+\mathrm{i}0)=(H-\lambda-\mathrm i0)^{-1}$; rather, it is an auxiliary operator that
encodes the resummation of the Born series in the definition of $T(z)$.
\end{remark}

\subsection{Consequences on interior energies}
Based on the localized Mourre estimate and the LAP on \(\mathcal{I}\),
we derive a time-averaged escape bound for the conjugate observable \(A_{\mathbb{Z}^d}\),
quantifying ballistic propagation for states with energies in \(\mathcal{I}\).
We also fix the weighted space
\[
\ell^{1}\!\big(\langle n\rangle^{1+\epsilon}\big)
:=\Big\{\,f:\mathbb{Z}^d\to\mathbb{C}\ :\
\|f\|_{\ell^{1}(\langle n\rangle^{1+\epsilon})}
:=\sum_{n\in \mathbb{Z}^d}\langle n\rangle^{1+\epsilon}\,|f(n)|<\infty\ \Big\}.
\]

\subsubsection{Birman-Kre\u{\i}n identity}\label{subsubsec:BK}

In the sequel $\mathfrak{S}_1$ denotes the trace class on the underlying Hilbert space:
$T\in\mathfrak{S}_1$ iff $\sum_{n\ge1} s_n(T)<\infty$, in which case $\mathrm{Tr}\,T$
is well-defined and $\|T\|_{\mathfrak{S}_1}=\mathrm{Tr}\,|T|$.

\begin{theorem}\label{thm:BK}
Assume either $W$ has finite support, or $W\in \ell^1(\langle n\rangle^{1+\epsilon})$ for some $\epsilon>0$.
Then for every $\chi\in C_c^\infty(\mathcal{I})$, $\chi(H)-\chi(H_0)\in\mathfrak{S}_1$, and there exists a spectral shift
function $\xi(\lambda)$ on $\mathcal{I}$ such that
\[
\det S(\lambda)=\exp\!\bigl(-2\pi \mathrm{i}\,\xi(\lambda)\bigr)\quad\text{for a.e. }\lambda\in \mathcal{I}.
\]
\end{theorem}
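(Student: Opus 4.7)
The plan is to reduce the identity to the classical Birman--Kre\u{\i}n machinery, proceeding in four stages: (i) trace-class estimates for $\chi(H)-\chi(H_0)$; (ii) Kre\u{\i}n's trace formula producing $\xi\in L^1(\R)$; (iii) introduction of the perturbation determinant $D(z):=\det(\mathrm{id}+W(Q)R_0(z))$ and control of its boundary values on $\Ic$; (iv) identification of $\det S(\lambda)$ with the boundary quotient $D(\lambda-\rmi 0)/D(\lambda+\rmi 0)$.

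First, I would observe that $W(Q)\in\mathfrak{S}_1$ under either hypothesis: finite support gives finite rank, while $W\in\ell^{1}(\langle n\rangle^{1+\epsilon})\subset\ell^{1}(\Z^d)$ makes $W(Q)$ diagonal with $\|W(Q)\|_{\mathfrak{S}_1}=\sum_n|W(n)|<\infty$. Using the resolvent identity $R(z)-R_0(z)=-R(z)W(Q)R_0(z)$ in the Helffer--Sj\"ostrand representation
\[
\chi(H)-\chi(H_0)=-\frac{1}{\pi}\int_{\C}\bar{\pa}\tilde{\chi}(z)\,R(z)\,W(Q)\,R_0(z)\,dz\wedge d\bar{z},
\]
the integrand has trace norm bounded by $C\|W(Q)\|_{\mathfrak{S}_1}|\Im z|^{-2}$, which is integrable against $\bar{\pa}\tilde{\chi}$ for an almost-analytic extension vanishing to sufficient order on $\R$. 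Hence $\chi(H)-\chi(H_0)\in\mathfrak{S}_1$, and Kre\u{\i}n's theorem yields a unique $\xi\in L^1(\R)$ with $\mathrm{Tr}(f(H)-f(H_0))=\int f'(\lambda)\xi(\lambda)\,d\lambda$ for all $f\in C_c^\infty(\R)$.

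Next, I would introduce $D(z):=\det(\mathrm{id}+W(Q)R_0(z))$ on $\C\setminus\R$; it is analytic and nonvanishing, and the classical Kre\u{\i}n link reads $\xi(\lambda)=\pi^{-1}\lim_{\eta\downarrow 0}\arg D(\lambda+\rmi\eta)$ a.e.\ on the standard continuous branch. The localized LAP on $\Ic$ combined with $W(Q)\in\mathfrak{S}_1$ yields trace-norm boundary values $W(Q)R_0(\lambda\pm\rmi 0)$ for a.e.\ $\lambda\in\Ic$, hence the existence of $D(\lambda\pm\rmi 0)$. To identify $\det S(\lambda)$, I would start from the stationary formula \eqref{eq:S-stationary} and factor the on-shell kernel $K(\lambda):=2\pi\rmi\,\Gamma_0(\lambda)T(\lambda+\rmi 0)\Gamma_0(\lambda)^*$ using $T(z)=W(\mathrm{id}-R_0(z)W)^{-1}$. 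The Fredholm cyclicity $\det(I+AB)=\det(I+BA)$ together with Stone's formula \eqref{eq:stone-app-strong} then yields
\[
\det(\mathrm{id}_{\Hr_\lambda}-K(\lambda))=\frac{D(\lambda-\rmi 0)}{D(\lambda+\rmi 0)}=\exp(-2\pi\rmi\,\xi(\lambda)),
\]
which is the claimed identity.

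The main obstacle is the rigorous execution of the last step: one must verify that $\Gamma_0(\lambda)W(Q)^{1/2}$ is Hilbert--Schmidt on the energy shell for a.e.\ $\lambda\in\Ic$, so that $K(\lambda)\in\mathfrak{S}_1(\Hr_\lambda)$ and the Fredholm cyclicity applies pointwise rather than only in the trace. The weighted hypothesis $W\in\ell^{1}(\langle n\rangle^{1+\epsilon})$ delivers precisely the H\"older regularity of $\lambda\mapsto\Gamma_0(\lambda)$ in $\Hc_{\pm s}$ (with $s>1/2$) needed for this pointwise identification, and one excludes the null set of energies where $D(\lambda\pm\rmi 0)$ vanishes or the cyclicity fails by invoking analyticity of $D$ together with the strong continuity of $S(\lambda)$ established in \S\ref{subsec:stationary-S}.
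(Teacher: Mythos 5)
Your proposal is correct and follows essentially the same route as the paper: your stages (i)--(ii) coincide with the paper's steps (a)--(b) (trace class of $W(Q)$, then $\chi(H)-\chi(H_0)\in\mathfrak{S}_1$ via Helffer--Sj\"ostrand and the resolvent identity), and your stages (iii)--(iv) merely unfold the classical perturbation-determinant argument behind the Birman--Kre\u{\i}n identity that the paper invokes by citation in its step (c). Your extra care about $\Gamma_0(\lambda)\,|W|^{1/2}$ being Hilbert--Schmidt on the shell (use $|W|^{1/2}$ and the sign factor, since $W$ need not be nonnegative) is a welcome sharpening of a point the paper glosses over, but it does not constitute a different method.
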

\begin{proof}
\emph{(a) Trace-class input).}
If $W$ has finite support, then $W(Q)$ is finite rank. If $W\in\ell^1(\langle n\rangle^{1+\epsilon})$,
then $\sum_{n}|W(n)|<\infty$, hence $W(Q)\in\mathfrak S_1$ with
$\|W(Q)\|_{\mathfrak S_1}=\sum_{n}|W(n)|$.

\emph{(b) Functional calculus difference is trace class).}
For $\chi\in C_c^\infty(\Ic)$, the Helffer-Sj{\"o}strand formula and the resolvent identity give
\[
\chi(H)-\chi(H_0)=\frac{1}{\pi}\!\int_{\C}\!\partial_{\bar z}\widetilde\chi(z)\,
(H-z)^{-1}W(Q)(H_0-z)^{-1}\,dx\,dy.
\]
Since $W(Q)\in\mathfrak S_1$ and $\|(H-z)^{-1}\|,\|(H_0-z)^{-1}\|\le|\Im z|^{-1}$,
the integrand is $\mathfrak S_1$-valued with norm $\le C|\Im z|^{-2}\|W(Q)\|_{\mathfrak S_1}$;
choosing the almost-analytic extension with $|\partial_{\bar z}\widetilde\chi(z)|\lesssim|\Im z|^N$ ($N\ge3$)
makes the integral convergent in $\mathfrak S_1$. Hence $\chi(H)-\chi(H_0)\in\mathfrak S_1$
(see e.g. \cite[App.~A \& Ch.~7]{ABG}).

\emph{(c) Spectral shift \& Birman-Krein).}
By the Lifshits-Krein theory for trace-class perturbations, there exists a spectral shift
function $\xi\in L^1_{\mathrm{loc}}(\Ic)$ such that
$\mathrm{Tr}\!\left(\varphi(H)-\varphi(H_0)\right)=\int_\Ic\varphi'(\lambda)\,\xi(\lambda)\,d\lambda$
for all $\varphi\in C_c^\infty(\Ic)$ (see \cite[Ch.~8]{ABG}).
On $\Ic$ the localized Mourre theory yields the LAP and the stationary scattering matrix $S(\lambda)$.
Therefore the Birman-Krein identity applies,
\[
\det S(\lambda)=\exp\!\bigl(-2\pi i\,\xi(\lambda)\bigr)\quad\text{for a.e. }\lambda\in\Ic,
\]
which proves the theorem.
\end{proof}

\subsubsection{Ballistic transport in time average}\label{subsubsec:ballistic}
\begin{theorem}\label{thm:ballistic-lb}
Let $\chi\in C_c^\infty(\mathcal{I})$. There exist $v_{\mathcal{I}}>0$ and $C_{\mathcal{I}}<\infty$ such that, for all $\mathcal{T}\ge1$ and $f\in\ell^2(\mathbb{Z}^d)$,
\[
\frac{1}{\mathcal{T}}\int_0^{\mathcal{T}} \big\| \mathbf{1}_{\{|A_{\mathbb{Z}^d,\vec{\mathrm{r}}}|\le v_{\mathcal{I}} t\}}\,e^{-\mathrm{i} tH}\chi(H)\,f\big\|^2\,dt
\ \le\ \frac{C_{\mathcal{I}}}{\log(1+\mathcal{T})}\,\|f\|^2.
\]
\end{theorem}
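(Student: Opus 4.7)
The plan is to combine the strict localized Mourre estimate from Theorem~\ref{thm:mou} with a time-dependent propagation observable in the spirit of Sigal-Soffer / Hunziker-Sigal, yielding first a ``mean local velocity'' estimate and then converting it to the Ces\`aro-mean statement with logarithmic decay that is dictated by the $\mathcal{C}^{1,1}_{\mathrm{loc}}$ regularity. Throughout, set $\psi_t:=e^{-\mathrm{i}tH}\chi(H)f$ and $A:=A_{\mathbb{Z}^d,\vec{\mathrm{r}}}$. By Theorem~\ref{thm:main}(i) the point spectrum of $H$ inside $\mathcal{I}$ is finite, so after shrinking $\chi$ to avoid these finitely many eigenvalues one may absorb the compact remainder and read the Mourre estimate as
\[
\chi(H)\,\mathrm{i}[H,A]_{\circ}\,\chi(H)\;\ge\;c_{\mathcal I}\,\chi(H)^2.
\]

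Next I would fix $v_\mathcal{I}\in(0,c_\mathcal{I})$ and an auxiliary $v'\in(v_\mathcal{I},c_\mathcal{I})$, pick $\phi\in C_c^\infty(\mathbb{R})$ with $\phi\equiv 1$ on $[-v_\mathcal{I},v_\mathcal{I}]$, $\operatorname{supp}\phi\subset(-v',v')$, and use the propagation observable
\[
\Phi(t)\;:=\;\chi(H)\,\phi\!\bigl(A/t\bigr)\,\chi(H),\qquad t\ge 1,
\]
which satisfies $\mathbf{1}_{\{|A|\le v_\mathcal{I} t\}}\le\phi(A/t)$ as operator inequality. Its Heisenberg derivative is computed via the Helffer--Sj\"ostrand calculus (using Proposition~\ref{prop:double-commutator-Zd} to treat the commutator $[H,\phi(A/t)]$), and reads
\[
D_t\Phi(t)\;=\;\frac{1}{t}\,\chi(H)\,\phi'(A/t)\,\bigl([H,\mathrm{i}A]_\circ-A/t\bigr)\,\chi(H)+R(t),
\]
where $R(t)$ gathers the second-order symbolic corrections and off-shell commutators. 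On $\operatorname{supp}\phi'$ one has $|A/t|\ge v_\mathcal{I}$, hence $[H,\mathrm{i}A]_\circ - A/t\ge c_\mathcal{I}-v'>0$ on the range of $\chi(H)$; decomposing $\phi'=\phi'_+ +\phi'_-$ according to the sign of its argument and analyzing each piece gives the principal-term inequality
\[
\langle\psi_t,D_t\Phi(t)\psi_t\rangle
\;\le\;-\frac{c_\mathcal{I}-v'}{t}\,\bigl\|\sqrt{|\phi'(A/t)|}\,\chi(H)\psi_t\bigr\|^2+\|R(t)\|\,\|f\|^2.
\]
Integrating from $1$ to $\infty$, using $\|\Phi(t)\|\le\|\chi\|_\infty^2$ uniformly and that $R(t)$ is absolutely integrable in $t$ thanks to $H\in\mathcal{C}^{1,1}_{\mathrm{loc}}(A)$ (this is where Dini integrability of the symmetrized second difference of $H$ along $e^{\mathrm{i}sA}$ is consumed), one gets first a family of weighted minimal-velocity bounds and, after summing over a partition that exhausts $|x|\le v_\mathcal{I}$, the estimate
\[
\int_1^\infty\frac{1}{t}\,\bigl\|\mathbf{1}_{\{|A|\le v_\mathcal{I} t\}}\,\chi(H)\,e^{-\mathrm{i}tH}f\bigr\|^2\,dt\;\le\;C\,\|f\|^2.
\]

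The last step converts this logarithmically weighted $L^1$ bound into the Ces\`aro statement with rate $1/\log(1+\mathcal{T})$. The clean route is to iterate the Heisenberg argument with a \emph{logarithmic} envelope: rerunning Step~3 on the observable $\Phi(t)\log(1+t)$ and using the Dini-integrability of $R(t)$ furnished by $\mathcal{C}^{1,1}_{\mathrm{loc}}(A)$ upgrades the previous display to a pointwise envelope $\|\mathbf{1}_{\{|A|\le v_\mathcal{I} t\}}\chi(H)\psi_t\|^2\le C/\log(1+t)\,\|f\|^2$; then the time average satisfies
\[
\frac{1}{\mathcal{T}}\int_0^\mathcal{T}\frac{C\,dt}{\log(1+t)}\ \sim\ \frac{C'}{\log(1+\mathcal{T})},
\]
by the standard logarithmic-integral asymptotics, which is the claim.

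The main obstacle is the last step: the weighted bound in Step~4 together with the trivial $L^\infty$ bound on the integrand does not algebraically imply Ces\`aro decay at rate $1/\log\mathcal{T}$, so one genuinely needs to extract the $1/\log(1+t)$ pointwise envelope from a refined propagation observable. This refinement is delicate in the fractional anisotropic setting because the remainder $R(t)$ inherits symbol-level degeneracies of the form $\lambda^{r_j-1}(4-\lambda)$ coming from the double commutator with $\Delta_\Z^{r_j}$ (Proposition~\ref{prop:double-commutator-Zd}); the assumption $\mathcal{I}\Subset\sigma(H_0)^\circ\setminus\Thr(\Delta_{\Z^d}^{\vec{\mathrm{r}}})$ ensures that these factors stay bounded on $\chi(H_0)\ell^2(\Z^d)$, and together with Helffer-Sj\"ostrand bounds and the Dini integrability from $W\in\mathcal{C}^{1,1}(A)$ (Lemma~\ref{lem:W-C11-weighted}), it just suffices to close the iteration at logarithmic order.
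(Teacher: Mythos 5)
Your core propagation argument (Steps~1--3) is the same as the paper's: a time-dependent observable built from a cutoff of $A/t$, the Heisenberg derivative, the localized Mourre estimate of Theorem~\ref{thm:mou} to make the principal term sign-definite, and integration in $t$. Where you genuinely diverge is at the end, and there your instinct is better than the paper's: the paper stops at $\int_1^{\mathcal T}t^{-1}\|\mathbf 1_{\{|A|\le vt\}}\psi_t\|^2\,dt\le C\log(1+\mathcal T)\|f\|^2$ and appeals to a ``standard logarithmic optimization argument,'' which, as you correctly observe, does not imply Ces\`aro decay at rate $1/\log(1+\mathcal T)$ (a constant integrand saturates that weighted bound while its Ces\`aro mean does not decay). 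Your bootstrap --- first a \emph{convergent} weighted integral, then a second Heisenberg argument on $\log(1+t)\,\Phi(t)$ whose telescoping term $\int_1^{\mathcal T}(1+t)^{-1}g(t)\,dt$ is fed exactly by that convergent integral, yielding the pointwise envelope $g(t)\lesssim\|f\|^2/\log(1+t)$ and hence $\tfrac1{\mathcal T}\int_0^{\mathcal T}\min(1,C/\log(1+t))\,dt\sim C/\log\mathcal T$ --- is a legitimate mechanism that actually closes this gap.

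Three points need repair, however. (a) Your choice of a \emph{symmetric bump} $\phi$ breaks the sign argument: on the left edge of $\operatorname{supp}\phi'$ one has $\phi'>0$ and $A/t\in(-v',-v)$, so $[H,\mathrm iA]_\circ-A/t\ge c_{\mathcal I}+v>0$ and the contribution $\tfrac1t\phi'(A/t)\bigl([H,\mathrm iA]_\circ-A/t\bigr)$ is \emph{positive} and of the same order $t^{-1}$ as the good term; your displayed inequality with $-\tfrac{c_{\mathcal I}-v'}{t}\|\sqrt{|\phi'|(A/t)}\,\chi(H)\psi_t\|^2$ is therefore false as stated. The fix is the paper's choice: a \emph{monotone} $\Phi$ with $\Phi'\in C_c^\infty$, $\Phi'\ge c\,\mathbf 1_{[-1,1]}$; then only one edge occurs, the Mourre term dominates the $\partial_tF_t$ term (which, contrary to the paper's claim, is \emph{not} pointwise nonpositive on $\{A<0\}$ but is absorbed by the commutator term when $v$ is small), and $\Phi'(A/(vt))\ge c\,\mathbf 1_{\{|A|\le vt\}}$ still controls the two-sided indicator. (b) Absorbing the compact remainder into a strict Mourre estimate requires $\operatorname{supp}\chi\cap\sigma_{\mathrm{pp}}(H)=\varnothing$; this is not cosmetic, since for a bound state $\psi$ with energy in $\operatorname{supp}\chi$ one has $\|\mathbf 1_{\{|A|\le vt\}}e^{-\mathrm itH}\psi\|\to\|\psi\|$ and the claimed decay fails, so the theorem as stated for arbitrary $\chi\in C_c^\infty(\mathcal I)$ only holds under this restriction and you must state it. (c) The $\log(1+t)$-weighted integrability of the expansion remainder $R(t)$ is asserted, not proved: it holds under the $\mathcal C^2_{\mathrm{loc}}$ regularity available for $H_0$ (Proposition~\ref{prop:double-commutator-Zd}), but $W$ is only in $\mathcal C^{1,1}(A)$ (Lemma~\ref{lem:W-C11-weighted}), for which the standard expansion gives only Dini integrability $\int\|R(t)\|\,dt<\infty$, and the extra logarithm can destroy this; you need either a logarithmically strengthened regularity class for $W$ or a dyadic summation showing the loss is harmless.
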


\begin{proof}
Set $u:=\chi(H)f$ and $f(t):=e^{-\mathrm{i}tH}u$.
Pick $\Phi\in C^\infty(\mathbb{R})$, bounded and constant at both ends, such that
\(\Phi'\in C_c^\infty(\mathbb{R})\), $\Phi'\ge0$, and $\Phi'\not\equiv0$.
For $v>0$ (to be chosen later) and $t\ge1$, define the time–dependent observable
\[
F_t := \Phi\!\big(A_{\mathbb{Z}^d,\vec{\mathrm{r}}}/(vt)\big).
\]
Then, for $g(t):=\langle f(t),F_t f(t)\rangle$, one has
\[
\frac{d}{dt}g(t) = \big\langle f(t),\partial_t F_t f(t)\big\rangle \ +\ \mathrm{i}\,\big\langle f(t),[H,F_t] f(t)\big\rangle.
\]

The explicit time derivative reads
\[
\partial_t F_t \ =\ -\frac{1}{t}\,\frac{A_{\mathbb{Z}^d,\vec{\mathrm{r}}}}{vt}\,\Phi'\!\big(A_{\mathbb{Z}^d,\vec{\mathrm{r}}}/(vt)\big),
\]
and, since $\Phi'\ge0$ and $\Phi$ is nondecreasing, one has
\(\langle f(t),\partial_t F_t f(t)\rangle\le0\).
Thus, this term can be dropped in an upper bound.

By the commutator expansion (see \cite[Prop.~6.2.10]{ABG}) and the assumption
\(H\in\mathcal{C}^{1,1}_{\mathrm{loc}}(A_{\mathbb{Z}^d,\vec{\mathrm{r}}})\) on $\mathcal{I}$, one has on $\mathrm{Ran}\,\chi(H)$:
\[
\mathrm{i}[H,F_t] \ =\ \frac{1}{vt}\,\Phi'\!\big(A_{\mathbb{Z}^d,\vec{\mathrm{r}}}/(vt)\big)\,\mathrm{i}[H,A_{\mathbb{Z}^d,\vec{\mathrm{r}}}] \ +\ \mathcal{O}(t^{-2})
\quad\text{in }\mathcal{B}(\ell^2(\mathbb{Z}^d)).
\]
Inserting $\chi(H)$ around $\mathrm{i}[H,A_{\mathbb{Z}^d,\vec{\mathrm{r}}}]$ and applying the localized Mourre estimate with compact remainder (\cite[Thm.~7.2.9]{ABG}), there exist $c_{\mathcal{I}}>0$ and a compact operator $K_{\mathcal{I}}$ such that
\[
\chi(H)\,\mathrm{i}[H,A_{\mathbb{Z}^d,\vec{\mathrm{r}}}]\,\chi(H)\ \ge\ c_{\mathcal{I}}\,\chi(H)^2\ -\ \chi(H)K_{\mathcal{I}}\chi(H).
\]

Thus, for $t\ge1$,
\begin{align*}
\frac{d}{dt}g(t)
&\ \ge\ \frac{c_{\mathcal{I}}}{vt}\,\big\langle f(t),\Phi'\!\big(A_{\mathbb{Z}^d,\vec{\mathrm{r}}}/(vt)\big) f(t)\big\rangle \\
&\quad -\ \frac{1}{vt}\,\big|\big\langle f(t),\Phi'\!\big(A_{\mathbb{Z}^d,\vec{\mathrm{r}}}/(vt)\big)K_{\mathcal{I}} f(t)\big\rangle\big|
\ -\ C\,t^{-2}\,\|u\|^2,
\end{align*}
with $C<\infty$ independent of $t$.

Integrating from $1$ to $\mathcal{T}$, the left–hand side telescopes and is bounded by
\(|g(\mathcal{T})-g(1)|\le 2\|\Phi\|_\infty\|u\|^2\). Hence
\begin{equation}\label{eq:int-ineq-en}
\frac{c_{\mathcal{I}}}{v}\int_1^{\mathcal{T}}\frac{1}{t}\,\big\langle f(t),\Phi'\!\big(A_{\mathbb{Z}^d,\vec{\mathrm{r}}}/(vt)\big) f(t)\big\rangle\,dt
\ \le\ C_1\|u\|^2\ +\ \frac{1}{v}\int_1^{\mathcal{T}}\frac{1}{t}\,R(t)\,dt,
\end{equation}
with
\(R(t):=\big|\langle f(t),\Phi'(A_{\mathbb{Z}^d,\vec{\mathrm{r}}}/(vt))K_{\mathcal{I}} f(t)\rangle\big|+ C't^{-1}\|u\|^2\).

The compact term is handled via Kato smoothness stemming from the LAP (\cite[Thm.~7.5.1]{ABG}).
Decompose $K_{\mathcal{I}}=K_{\mathrm{fr}}+K_{\mathrm{sm}}$ (finite rank + small norm).
By Cauchy–Schwarz and $H$–smoothness on $\mathrm{Ran}\chi(H)$, one obtains
\[
\int_1^{\mathcal{T}}\frac{1}{t}\,\big|\langle f(t),\Phi'(A_{\mathbb{Z}^d,\vec{\mathrm{r}}}/(vt))K_{\mathcal{I}} f(t)\rangle\big|\,dt
\ \le\ C_2\,\log(1+\mathcal{T})\,\|u\|^2,
\]
uniformly in $v>0$; also $\int_1^{\mathcal{T}}t^{-2}\,dt\le1$.
Inserted into \eqref{eq:int-ineq-en}, this yields
\[
\int_1^{\mathcal{T}}\frac{1}{t}\,\big\langle f(t),\Phi'\!\big(A_{\mathbb{Z}^d,\vec{\mathrm{r}}}/(vt)\big) f(t)\big\rangle\,dt
\ \le\ C_3\,\log(1+\mathcal{T})\,\|u\|^2.
\]

Finally, choose $\Phi$ such that $\Phi'\ge c\,\mathbf{1}_{\{|x|\le1\}}$ for some $c>0$. Then
\[
\big\langle f(t),\Phi'\!\big(A_{\mathbb{Z}^d,\vec{\mathrm{r}}}/(vt)\big) f(t)\big\rangle
\ \ge\ c\,\big\|\mathbf{1}_{\{|A_{\mathbb{Z}^d,\vec{\mathrm{r}}}|\le vt\}}\,f(t)\big\|^2.
\]
Therefore,
\[
\int_1^{\mathcal{T}}\frac{1}{t}\,\big\|\mathbf{1}_{\{|A_{\mathbb{Z}^d,\vec{\mathrm{r}}}|\le vt\}}\,e^{-\mathrm{i}tH}\chi(H)f\big\|^2\,dt
\ \le\ C_4\,\log(1+\mathcal{T})\,\|f\|^2.
\]
Since $\int_1^{\mathcal{T}}t^{-1}\,dt=\log\mathcal{T}$, the standard logarithmic optimization argument (\cite[§7.4]{ABG}) gives, after division by $\mathcal{T}$ and with $v=v_{\mathcal{I}}$ chosen small enough, the desired inequality on $[0,\mathcal{T}]$ (the interval $[0,1]$ being controlled trivially by $C\,\mathcal{T}^{-1}\|f\|^2$). This completes the proof.
\end{proof}

\begin{remark}[On the choice of $\Phi$]
There is no nontrivial $\Phi\in C_c^\infty$ that is monotone.
Instead, one chooses $\Phi\in C^\infty$ bounded, constant at both ends, with
$\Phi'\in C_c^\infty$, $\Phi'\ge0$.
For instance, take $\psi\in C_c^\infty$, $\psi\ge0$, and set
\(\Phi(x):=\int_{-\infty}^x\psi(s)\,ds\).
After normalization, this satisfies $\Phi'\ge c\,\mathbf{1}_{[-1,1]}$ and is perfectly suited for the argument.
\end{remark}

\subsubsection{Continuity in the fractional exponents}\label{subsubsec:continuity-r}
\begin{proposition}\label{prop:nr-cont}
Let  $\mathcal{R}\subset\{\vec{\mathbf{r}}\in\mathbb{R}^d:\ \min_j r_j>r_*>-1\}$ be compact.
Then for any $z\in\mathbb{C}\setminus\mathbb{R}$, $\vec{\mathbf{r}}\mapsto (H_0(\vec{\mathbf{r}})-z)^{-1}$ is norm-continuous.
If, moreover,
\(
\mathcal{I}\Subset \bigcap_{\vec{\mathbf{r}}\in\mathcal{R}}\big(\sigma(H_0(\vec{\mathbf{r}}))^\circ\setminus\Thr\big),
\)
the LAP constants can be chosen uniformly in $\vec{\mathbf{r}}\in\mathcal{R}$, and
$\vec{\mathbf{r}}\mapsto S_{\vec{\mathbf{r}}}(\lambda)$ is strongly continuous on $\mathcal{I}$.
\end{proposition}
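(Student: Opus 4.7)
My plan is to establish the three claims in sequence---norm continuity of $R_0^{\vec{\mathbf r}}(z):=(H_0(\vec{\mathbf r})-z)^{-1}$, uniformity of the LAP on $\mathcal R$, and strong continuity of $\vec{\mathbf r}\mapsto S_{\vec{\mathbf r}}(\lambda)$---by pushing all the quantitative estimates of Section~3 down to the symbol $\vartheta_{\vec{\mathbf r}}$ on the Fourier side. After partitioning $\mathcal R$ into finitely many compact pieces on which the signs $\mathrm{sign}(r_j)$ are constant and each $|r_j|$ is bounded away from $0$ (possible by compactness and $\vec{\mathbf r}\neq 0$), I may assume that the conjugate operator $A_{\mathbb Z^d,\vec{\mathbf r}}$ of \eqref{bb} is independent of $\vec{\mathbf r}\in\mathcal R$.

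For norm continuity of the free resolvent, the Fourier representation reduces the claim to
\[
\sup_{\theta\in\mathbb T^d}\left|\frac{1}{\vartheta_{\vec{\mathbf r}}(\theta)-z}-\frac{1}{\vartheta_{\vec{\mathbf r}_0}(\theta)-z}\right|\;=\;\sup_{\theta}\frac{|\vartheta_{\vec{\mathbf r}}(\theta)-\vartheta_{\vec{\mathbf r}_0}(\theta)|}{|\vartheta_{\vec{\mathbf r}}(\theta)-z|\,|\vartheta_{\vec{\mathbf r}_0}(\theta)-z|}\;\xrightarrow[\vec{\mathbf r}\to\vec{\mathbf r}_0]{}\;0.
\]
Outside a small neighborhood $U_\delta$ of the polar set $\Sigma$ from Proposition~\ref{prop:poles-Zd} the symbol is jointly continuous in $(\theta,\vec{\mathbf r})$ on a compact set, so the numerator tends to $0$ uniformly while the denominator is bounded below by $|\Im z|^2$. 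Inside $U_\delta$ I would use, for each $j\in N$ and $t_j:=2-2\cos\theta_j$ small, the mean-value inequality $|t_j^{r_j}-t_j^{r_{j,0}}|\le|r_j-r_{j,0}|\,t_j^{\min(r_j,r_{j,0})}|\ln t_j|$ together with the lower bound $\vartheta_{\vec{\mathbf r}}(\theta)\vartheta_{\vec{\mathbf r}_0}(\theta)\gtrsim t_j^{r_j+r_{j,0}}$, which combine to
\[
\frac{|\vartheta_{\vec{\mathbf r}}(\theta)-\vartheta_{\vec{\mathbf r}_0}(\theta)|}{\vartheta_{\vec{\mathbf r}}(\theta)\,\vartheta_{\vec{\mathbf r}_0}(\theta)}\;\lesssim\;|\vec{\mathbf r}-\vec{\mathbf r}_0|\,t_j^{-\max(r_j,r_{j,0})}|\ln t_j|.
\]
Since $r_j$ is bounded away from $0$ on $\mathcal R$, the exponent $-\max(r_j,r_{j,0})$ stays in a fixed compact subinterval of $(0,\infty)$, so $t^\alpha|\ln t|$ is uniformly bounded on $(0,\delta]$ and the estimate goes to zero as $\vec{\mathbf r}\to\vec{\mathbf r}_0$.

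With Step~1 in hand, the uniform LAP is obtained by tracking constants in the proofs of Theorems~\ref{thm:mourre_Zd} and~\ref{thm:mou}. The Mourre constant $c_{\mathcal I}(\vec{\mathbf r})$ is the infimum on $\mathcal I\Subset\sigma(H_0)^\circ\setminus\Thr$ of a symbol-level function of $(\theta,\vec{\mathbf r})$ that is continuous and nonvanishing on the relevant compact set, hence bounded below uniformly on $\mathcal R$; the $\mathcal C^2_{\mathrm{loc}}(A)$ bounds for $H_0(\vec{\mathbf r})$ come from the explicit symbol $g_{r_j}$ of Proposition~\ref{prop:double-commutator-Zd} and depend continuously on $\vec{\mathbf r}$; the $\mathcal C^{1,1}(A)$ bound on $W$ from Lemma~\ref{lem:W-C11-weighted} is $\vec{\mathbf r}$-free; and the compact remainder in \eqref{eq:pert-mourre-Zd} stays compact under our continuous perturbation by Step~1 and the compactness of $W(Q)$. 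Feeding these uniform inputs into \cite[Thms.~7.4.1--7.4.2]{ABG} produces an LAP with a single constant valid throughout $\mathcal R$. Strong continuity of $S_{\vec{\mathbf r}}(\lambda)$ then follows from the stationary formula \eqref{eq:S-stationary}: identifying the spectral shells via Fourier, one reduces to strong continuity of $T_{\vec{\mathbf r}}(\lambda+\mathrm i0)=W-W\,R_{\vec{\mathbf r}}(\lambda+\mathrm i0)\,W$ as a bounded map $\mathcal H_s\to\mathcal H_{-s}$, which in turn follows from the resolvent identity $R_{\vec{\mathbf r}}(z)-R_{\vec{\mathbf r}_0}(z)=R_{\vec{\mathbf r}}(z)(H_0(\vec{\mathbf r}_0)-H_0(\vec{\mathbf r}))R_{\vec{\mathbf r}_0}(z)$ applied off-shell, Step~1, and the uniform LAP to let $\eta\downarrow 0$ through a density argument.

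The main analytic obstacle is Step~1 inside $U_\delta$: the free symbol is unbounded, so the global bound $|\Im z|^{-2}$ on the resolvent product fails to be $\vec{\mathbf r}$-uniform, and one must exploit the cancellation between the symbol difference and the two large resolvent factors. The hypothesis $\min_j r_j>r_*>-1$, combined with the reduction that $|r_j|$ stays bounded away from $0$ on $\mathcal R$, is precisely what keeps the critical exponent $-\max(r_j,r_{j,0})$ in a fixed compact subinterval of $(0,\infty)$ and makes the factor $t^\alpha|\ln t|$ uniformly bounded, so that the whole argument passes through.
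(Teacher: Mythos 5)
Your argument for the first claim takes a genuinely different, and in fact more robust, route than the paper's. The paper deduces norm--resolvent continuity from norm continuity of $\vec{\mathbf r}\mapsto H_0(\vec{\mathbf r})$ itself, asserting that the symbol $(r,k)\mapsto\varphi_r(k)$ is ``bounded uniformly'' on $[r_*,R]\times\mathbb T$; but this is false whenever some $r_j<0$ (the symbol has the algebraic pole described in Proposition~\ref{prop:poles-Zd}), so the paper's one-line argument only covers the case $\min_j r_j\ge 0$. You instead work directly with the bounded function $1/(\vartheta_{\vec{\mathbf r}}-z)$, split $\mathbb T^d$ into a neighborhood of the polar set $\Sigma$ and its complement, and exploit the cancellation between the symbol difference and the two large resolvent factors via the mean-value bound $|t^{r}-t^{r_0}|\le|r-r_0|\,t^{\min(r,r_0)}|\ln t|$ together with $|\vartheta_{\vec{\mathbf r}}(\theta)-z|\ge c(z)\,t_j^{r_j}$ (each summand of $\vartheta_{\vec{\mathbf r}}$ being nonnegative). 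The resulting factor $t^{-\max(r_j,r_{j,0})}|\ln t|$ is indeed uniformly bounded once $-\max(r_j,r_{j,0})$ is bounded below by a positive constant, and treating each coordinate's contribution to the numerator against the matching lower bound for the denominator handles the case where several $t_j$ are simultaneously small. This is the estimate the paper actually needs and does not supply. Your treatment of the uniform LAP and of the strong continuity of $S_{\vec{\mathbf r}}(\lambda)$ is at the same (sketchy) level as the paper's: both reduce to tracking the Mourre and $\mathcal C^{1,1}$ constants and then invoking the stationary formula \eqref{eq:S-stationary} with the resolvent identity, so there is no substantive divergence there.

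One step is not justified as written: the opening reduction to pieces of $\mathcal R$ on which each $|r_j|$ is bounded away from $0$, claimed to be ``possible by compactness and $\vec{\mathbf r}\neq 0$.'' The condition $\vec{\mathbf r}\neq 0$ only excludes the simultaneous vanishing of all components; $\mathcal R$ may perfectly well contain points with some $r_{j}=0$, and no finite partition of a compact set containing such a point can keep $|r_j|$ bounded away from zero. This is not a cosmetic issue: at such a point norm--resolvent continuity genuinely fails. For $d=1$, $r_0=0$ gives resolvent symbol $(1-z)^{-1}$, while for $r=\pm\varepsilon$ the resolvent symbol at $\theta$ near $0$ tends to $0$ (for $r<0$) or to $-1/z$ (for $r>0$), so the sup-norm distance stays bounded below as $\varepsilon\to0$. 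Hence your reduction silently adds the hypothesis that no component $r_j$ vanishes on $\mathcal R$ (equivalently $\min_j|r_j|\ge c>0$ by compactness), which is in fact \emph{necessary} for the conclusion and is also what keeps the critical exponent $-\max(r_j,r_{j,0})$ away from $0$ in your key estimate (the stated hypothesis $r_*>-1$ plays no role there). You should state this extra assumption explicitly rather than attribute it to $\vec{\mathbf r}\neq0$; with it, the sign-orthant pieces $\mathcal R\cap\{\pm r_j\ge0\}$ are compact and pairwise separated, the conjugate operator is constant on each, and your argument goes through.
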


\begin{proof}
In Fourier variables,
\(
\mathcal{F} H_0(\vec{\mathbf{r}})\mathcal{F}^{-1}
\)
is multiplication by
\(
\sigma_{\vec{\mathbf{r}}}(k)=\sum_{j=1}^d \varphi_{r_j}(k_j),
\)
where $(r,k)\mapsto\varphi_r(k)$ is continuous on $[r_*,R]\times\mathbb{T}$ and bounded uniformly
(for any fixed compact $[r_*,R]$). Hence $\vec{\mathbf{r}}\mapsto H_0(\vec{\mathbf{r}})$ is norm-continuous,
and by the resolvent identity (\cite[Thm.~IV.1.16]{Kato80}) so is
\(
\vec{\mathbf{r}}\mapsto (H_0(\vec{\mathbf{r}})-z)^{-1}.
\)
If $\mathcal{I}$ avoids thresholds uniformly in $\vec{\mathbf{r}}\in\mathcal{R}$, the Mourre constants and
LAP bounds on $\mathcal{I}$ can be chosen uniformly in $\vec{\mathbf{r}}$. Using \eqref{eq:S-stationary}
and dominated convergence for $T(\lambda+\mathrm{i}0)$ in $\mathcal{B}(\mathcal{H}_s,\mathcal{H}_{-s})$
(again uniform LAP), we obtain strong continuity of $\vec{\mathbf{r}}\mapsto S_{\vec{\mathbf{r}}}(\lambda)$ on $\mathcal{I}$.
\end{proof}
\begin{remark}
The norm-resolvent continuity in $\vec{\mathrm{r}}$ implies inner--outer continuity of the spectra.
This is in the spirit of results on magnetic families on $\mathbb{Z}^d$ \cite{ParraRichard-Zd}
and of results for magnetic pseudodifferential families in the continuum \cite{AthmouniMantoiuPurice-2010}.
In particular, a Mourre estimate uniform on $\mathcal I$ yields a uniform LAP and the strong continuity of
the scattering matrix $\lambda\mapsto S_{\vec{\mathrm{r}}}(\lambda)$ on $\mathcal I$.
\end{remark}
\begin{proposition}\label{prop:finite-pp}
If $W\in \ell^{1}(\langle n\rangle^{1+\epsilon})$ for some $\epsilon>0$, then
$\sigma_{\mathrm{pp}}(H)\cap \mathcal{I}$ is finite, counting multiplicities.
\end{proposition}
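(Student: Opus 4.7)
\medskip
\noindent\emph{Proof proposal.}
The plan is to combine the strict localized Mourre estimate with compact remainder established in Theorem~\ref{thm:mou} with the Virial theorem, and run the standard finite-multiplicity argument by contradiction. First I would check that the hypothesis $W\in \ell^{1}(\langle n\rangle^{1+\epsilon})$ entails $\textbf{(H0)}$ and $\textbf{(H1)}$: summability forces $|W(n)|=o(\langle n\rangle^{-1-\epsilon})$, hence $W(n)\to 0$, and using $\langle n_j\rangle\le \Lambda(n)$ together with $|W(n+e_j)-W(n)|\le |W(n+e_j)|+|W(n)|$ one obtains $|W(n+e_j)-W(n)|\le C\,\Lambda(n)^{-1-\delta}\le C\,\Lambda(n)^{-\delta}\langle n_j\rangle^{-1}$ for any $\delta\in(0,\epsilon]$. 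Consequently Theorem~\ref{thm:mou} is available on the fixed window $\mathcal{I}$ and yields both the local regularity $H\in\mathcal{C}^{1,1}_{\mathrm{loc}}(A_{\mathbb{Z}^d,\vec{\mathrm{r}}})$ and
\[
E_{\mathcal{I}}(H)\,\mathrm{i}[H,A_{\mathbb{Z}^d,\vec{\mathrm{r}}}]_\circ\,E_{\mathcal{I}}(H)\ \ge\ c_{\mathcal{I}}\,E_{\mathcal{I}}(H)-K,
\]
for some $c_{\mathcal{I}}>0$ and some compact operator $K\in\mathcal{K}(\ell^2(\mathbb{Z}^d))$.

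Assume for contradiction that $\dim E_{\mathcal{I}}(H_{\mathrm{pp}})\ell^2(\mathbb{Z}^d)=\infty$. Then there exists an orthonormal sequence $(u_n)_{n\ge 1}$ of eigenvectors of $H$ with eigenvalues $\lambda_n\in\mathcal{I}$. By the Virial theorem in the $\mathcal{C}^{1,1}_{\mathrm{loc}}$-class, read in the form-commutator sense of Definition~\ref{def:form-commutator} (see \cite[Ch.~7]{ABG}), every such eigenvector satisfies
\[
\langle u_n,\,\mathrm{i}[H,A_{\mathbb{Z}^d,\vec{\mathrm{r}}}]_\circ\,u_n\rangle=0.
\]
Since $u_n=E_{\mathcal{I}}(H)u_n$, testing the Mourre inequality against $u_n$ gives
\[
0\ =\ \langle u_n,\,\mathrm{i}[H,A_{\mathbb{Z}^d,\vec{\mathrm{r}}}]_\circ\,u_n\rangle\ \ge\ c_{\mathcal{I}}\,\|u_n\|^2-\langle u_n,Ku_n\rangle\ =\ c_{\mathcal{I}}-\langle u_n,Ku_n\rangle.
\]
An orthonormal sequence converges weakly to $0$, so compactness of $K$ yields $\|Ku_n\|\to 0$ and hence $\langle u_n,Ku_n\rangle\to 0$. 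Passing to the limit produces $0\ge c_{\mathcal{I}}>0$, a contradiction. Therefore $\dim E_{\mathcal{I}}(H_{\mathrm{pp}})\ell^2(\mathbb{Z}^d)<\infty$, which is precisely finiteness of $\sigma_{\mathrm{pp}}(H)\cap\mathcal{I}$ counted with multiplicity.

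The only delicate point I expect is the status of the Virial identity when some $r_j<0$, because then $A_{\mathbb{Z}^d,\vec{\mathrm{r}}}$ is unbounded from below and eigenvectors of $H$ need not belong to $\mathcal{D}(A_{\mathbb{Z}^d,\vec{\mathrm{r}}})$ a priori. This is exactly what the bounded-form commutator $E_{\mathcal{I}}(H)[H,\mathrm{i}A_{\mathbb{Z}^d,\vec{\mathrm{r}}}]_\circ E_{\mathcal{I}}(H)$ introduced in Definition~\ref{def:form-commutator} is tailored to handle: the Virial identity can be read as a genuine scalar identity involving a bounded self-adjoint operator on $E_{\mathcal{I}}(H)\ell^2(\mathbb{Z}^d)$, so no domain issue arises when pairing with $u_n$. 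The stronger decay hypothesis $W\in \ell^{1}(\langle n\rangle^{1+\epsilon})$ is in fact stronger than what the above argument strictly requires (the assumptions of Theorem~\ref{thm:main} already suffice for finite multiplicity), but it keeps the statement naturally embedded in the trace-class, Birman--Krein framework of Theorem~\ref{thm:BK}, under which the conclusion can also be phrased as saying that the spectral shift function $\xi$ has only finitely many unit jumps inside $\mathcal{I}$.
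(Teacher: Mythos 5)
Your argument is correct, but it follows a genuinely different route from the paper. You run the classical Mourre--virial argument: the localized Mourre estimate with compact remainder from Theorem~\ref{thm:mou}, the virial identity $\langle u,\mathrm{i}[H,A_{\mathbb{Z}^d,\vec{\mathrm{r}}}]_\circ u\rangle=0$ for eigenvectors with energy in $\mathcal{I}$ (valid in the $\mathcal{C}^{1,1}_{\mathrm{loc}}$ class, cf.\ \cite[Prop.~7.2.10]{ABG}), and weak convergence to zero of an orthonormal sequence of eigenvectors to kill the compact remainder. The paper instead argues through trace--class perturbation theory: it shows $\chi(H)-\chi(H_0)\in\mathfrak{S}_1$ via Helffer--Sj\"ostrand (as in Theorem~\ref{thm:BK}), passes to $E_{\mathcal{I}}(H)-E_{\mathcal{I}}(H_0)\in\mathfrak{S}_1$, and uses that $H_0$ is purely absolutely continuous on $\mathcal{I}$ to control the point--spectral part of $E_{\mathcal{I}}(H)$. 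Your route is more elementary and, as you correctly observe, does not actually need the $\ell^1(\langle n\rangle^{1+\epsilon})$ hypothesis --- the assumptions \textbf{(H0)}--\textbf{(H1)} of Theorem~\ref{thm:main} already suffice, and your verification that the $\ell^1$--weighted condition implies them is sound (summability gives $\langle n\rangle^{1+\epsilon}|W(n)|\to0$, and $\Lambda(n)\le d\,\langle n\rangle$ converts the resulting bound into \textbf{(H1)} with $\delta=\epsilon$). It also sidesteps the delicate point in the trace--class route, namely that the sharp (non-smooth) spectral projection $E_{\mathcal{I}}$ is not directly accessible by the Helffer--Sj\"ostrand calculus, so the passage from $\chi(H)-\chi(H_0)\in\mathfrak{S}_1$ to $E_{\mathcal{I}}(H)-E_{\mathcal{I}}(H_0)\in\mathfrak{S}_1$ requires extra care at the endpoints of $\mathcal{I}$. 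What the paper's approach buys in exchange is the tie-in with the spectral shift function and the Birman--Krein framework of Section~\ref{subsubsec:BK}, which you acknowledge in your closing remark. Your handling of the domain issue for $A_{\mathbb{Z}^d,\vec{\mathrm{r}}}$ when some $r_j<0$, via the bounded form commutator of Definition~\ref{def:form-commutator} on $E_{\mathcal{I}}(H)\ell^2(\mathbb{Z}^d)$, is exactly the right way to justify the virial step here.
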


\begin{proof}
Let $\chi\in C_c^\infty(\mathcal{I})$. As in the proof of Theorem~\ref{thm:BK}, one has
\[
\chi(H)-\chi(H_0)\in\mathfrak{S}_1,
\]
where $\mathfrak{S}_1$ denotes the trace class.
By the Helffer--Sjöstrand formula and stability of the functional calculus under trace class perturbations, it follows that
\[
E_{\mathcal{I}}(H)-E_{\mathcal{I}}(H_0)\in\mathfrak{S}_1.
\]
Since $H_0$ is purely absolutely continuous on $\mathcal{I}$ (being inside the interior of the spectrum), the spectral projection $E_{\mathcal{I}}(H_0)$ has no pure point component. Consequently, the compact perturbation above implies that $E_{\mathcal{I}}(H)$ differs from a projection of infinite rank by a trace class operator, hence $E_{\mathcal{I}}(H)$ has at most finite rank.

Now recall that the rank of $E_{\mathcal{I}}(H)$ is exactly the sum of the multiplicities of eigenvalues of $H$ in $\mathcal{I}$, i.e.
\[
\mathrm{rank}\,E_{\mathcal{I}}(H)\ =\ \sum_{\lambda\in\sigma_{\mathrm{pp}}(H)\cap \mathcal{I}} m(\lambda),
\]
where $m(\lambda)$ is the algebraic multiplicity of $\lambda$.
Since $E_{\mathcal{I}}(H)$ has finite rank, this sum is finite, which proves that
\[
\#\big(\sigma_{\mathrm{pp}}(H)\cap \mathcal{I}\big)<\infty,
\]
counting multiplicities.
\end{proof}

\begin{remark}
The conclusion that $\#\bigl(\sigma_{\mathrm{pp}}(H)\cap\mathcal I\bigr)<\infty$ follows from a uniform Mourre estimate on $\mathcal I$ and the limiting absorption principle (LAP), which yield compactness of the localized resolvent and thus only finitely many eigenvalues in $\mathcal I$ (each of finite multiplicity).
In the continuum magnetic setting, Athmouni-Purice prove Schatten--von Neumann criteria within the magnetic Weyl calculus, allowing one to place the Birman--Schwinger type operators in $\mathfrak S_p$ and derive analogous spectral finiteness statements; see \cite{AthmouniPurice-CPDE-2018}. For general background on trace ideals/Birman--Schwinger and on the Mourre framework, see \cite{Simon-TraceIdeals},\cite[vol.IV]{RS}.
\end{remark}


\begin{thebibliography}{XXXXXXXXXX}
\bibitem[AmBoGe]{ABG} W.O.\ Amrein, A.\ Boutet de Monvel, and V.\ Georgescu, \emph{$C_0$-groups, commutator methods and spectral theory of $N$-body hamiltonians}, Birkh\"auser (1996).

\bibitem[AlFr]{AF} C.\ Allard and R.\ Froese, \emph{A Mourre estimate for a Schr\"{o}dinger operator on a binary tree}, Rev.\ Math.\ Phys.\ {\bf 12} (2000), no.\ 12, 1655--1667.

\bibitem[AtBaDaEn]{ABDE} N.\ Athmouni, H.\ Baloudi, M.\ Damak, and M.\ Ennaceur, \emph{The magnetic discrete Laplacian inferred from the Gauss-Bonnet operator and application}, Annals of Functional Analysis {\bf 12} (2021), 33, \url{https://doi.org/10.1007/s43034-021-00119-8}.

\bibitem[AtDa]{AD} N.\ Athmouni and M.\ Damak, \emph{On the non-propagation theorem and applications}, Turkish Journal of Mathematics (2013), \url{https://doi.org/10.3906/mat-1010-75}.

\bibitem[AtEnGo]{AEG} N.\ Athmouni, M.\ Ennaceur, and S.\ Gol\'enia, \emph{Spectral analysis of the Laplacian acting on discrete cusps and funnels}, Complex Analysis and Operator Theory (2021), \url{https://doi.org/10.1007/s11785-020-01053-8}.

\bibitem[AtEnGo1]{AEG1} N.\ Athmouni, M.\ Ennaceur, and S.\ Gol\'enia, \emph{Spectral analysis of the magnetic Laplacian acting on discrete funnels}, arXiv:2507.05766v1, \url{https://doi.org/10.48550/arXiv.2507.05766}.

\bibitem[AtEnGoJa]{AEGJ} N.\ Athmouni, M.\ Ennaceur, S.\ Gol\'enia, and A.\ Jadlaoui, \emph{Limiting Absorption Principle for long-range perturbation in the discrete triangular lattice setting}, arXiv:2403.06578v2 (2024), \url{https://doi.org/10.48550/arXiv.2403.06578}.

\bibitem[AtEnGoJa2]{AEGJ2} N.\ Athmouni, M.\ Ennaceur, S.\ Gol\'enia, and A.\ Jadlaoui, \emph{Limiting Absorption Principle for long-range perturbation in a graphene setting}, arXiv:2504.13512v1.

\bibitem[AMP10]{AthmouniMantoiuPurice-2010}
N.~Athmouni, M.~M\kern-0.15em\u{a}ntoiu, and R.~Purice,
\emph{On the continuity of spectra for families of magnetic pseudodifferential operators},
Journal of Mathematical Physics \textbf{51} (2010), no.~8, 083517.
\href{https://doi.org/10.1063/1.3470118}{doi:10.1063/1.3470118}.
\bibitem[AP18]{AthmouniPurice-CPDE-2018}
N.~Athmouni and R.~Purice,
\emph{A Schatten--von Neumann class criterion for the magnetic Weyl calculus},
Communications in Partial Differential Equations \textbf{43} (2018), no.~5, 733--749.
\href{https://doi.org/10.1080/03605302.2018.1475486}{doi:10.1080/03605302.2018.1475486}
\bibitem[BaDeJiLi]{BDL} Y.\ Bao, Q.\ Deng, Y.\ Jiang, and P.\ Li, \emph{Fractional square functions and potential spaces related to discrete Laplacian}, Commun.\ Pure Appl.\ Anal., \url{https://doi.org/10.3934/cpaa.2025086}.

\bibitem[BoGo]{BG} M.\ Bonnefont and S.\ Gol\'enia, \emph{Essential spectrum and Weyl asymptotics for discrete Laplacians}, Ann.\ Fac.\ Sci.\ Toulouse Math.\ (6) {\bf 24} (2015), no.\ 3, 563--624.

\bibitem[BoSa]{BoSa} A.\ Boutet de Monvel and J.\ Sahbani, \emph{On the spectral properties of discrete Schr\"odinger operators: the multi-dimensional case}, Rev.\ Math.\ Phys.\ {\bf 11} (1999), no.\ 9, 1061--1078.
\bibitem[BuVa]{BuVa} C.\ Bucur and E.\ Valdinoci, \emph{Nonlocal Diffusion and Applications}, Lecture Notes of the Unione Matematica Italiana {\bf 20}, Springer, Cham, 2016.

\bibitem[Ch]{Ch} F.R.K.\ Chung, \emph{Spectral graph theory}, CBMS Regional Conference Series in Mathematics {\bf 92}, American Mathematical Society, Providence, RI, 1997.

\bibitem[CyFrKiSi]{cl} H.L.\ Cycon, R.G.\ Froese, W.\ Kirsch, and B.\ Simon, \emph{Schr\"odinger Operators with Application to Quantum Mechanics and Global Geometry}, Texts and Monographs in Physics, Springer-Verlag, Berlin, 1987.


\bibitem[Da]{Da} M.\ Damak, \emph{On the spectral theory of tensor product Hamiltonians}, J.\ Operator Theory {\bf 55} (2006), no.\ 2, 253--268.

\bibitem[DaMaTi]{Da1} M.\ Damak, M.\ Mantoiu, and R.\ Tiedra de Aldecoa, \emph{Toeplitz algebras and spectral results for the one-dimensional Heisenberg model}, J.\ Math.\ Phys.\ {\bf 47} (2006), 082107.
    \bibitem[Dav]{Dav} E.B.\ Davies, \emph{Linear Operators and their Spectra}, Cambridge Studies in Advanced Mathematics {\bf 106}, Cambridge University Press, 2007.

\bibitem[FaZh]{Fan2021} X.\ Fan and J.\ Zhao, \emph{Fractional Magnetic Laplacians and Their Applications}, J.\ Math.\ Phys.\ {\bf 62} (2021), no.\ 6, 061501.
\bibitem[FrHe]{FH} R.G.\ Froese and I.\ Herbst, \emph{Exponential bounds and absence of positive eigenvalues for $N$-body Schr\"odinger operators}, Comm.\ Math.\ Phys.\ {\bf 87} (1982), 429--447.

\bibitem[GeG\'e]{GeGe} V.\ Georgescu and C.\ G\'erard, \emph{On the Virial Theorem in Quantum Mechanics}, Commun.\ Math.\ Phys.\ {\bf 208} (1999), no.\ 2, 275--281.

\bibitem[GeG\'eM\o]{GGM1} V.\ Georgescu, C.\ G\'erard, and J.S.\ M\o ller, \emph{Commutators, $C_0$-semigroups and resolvent estimates}, J.\ Funct.\ Anal.\ {\bf 216} (2004), no.\ 2, 303--361.

\bibitem[GeGo]{GG} V.\ Georgescu and S.\ Gol\'enia, \emph{Isometries, Fock spaces, and spectral analysis of Schr\"odinger operators on trees}, J.\ Funct.\ Anal.\ {\bf 227} (2005), no.\ 2, 389--429.

\bibitem[G\'e]{Ge} C.\ G\'erard, \emph{A proof of the abstract limiting absorption principle by energy estimates}, J.\ Funct.\ Anal.\ {\bf 254} (2008), no.\ 11, 2707--2724.

\bibitem[G\'e1]{Ge2015} C.\ G\'erard, \emph{Mourre's Commutator Theory and Applications}, Lecture Notes in Mathematics {\bf 2216}, Springer, 2015.

\bibitem[Go]{Go} S.\ Gol\'enia, \emph{Hardy inequality and asymptotic eigenvalue distribution for discrete Laplacians}, J.\ Funct.\ Anal.\ {\bf 266} (2014), no.\ 5, 2662--2688.

\bibitem[GoJe1]{GJ1} S.\ Gol\'enia and T.\ Jecko, \emph{A new look at Mourre's commutator theory}, Complex Anal.\ Oper.\ Theory {\bf 1} (2007), no.\ 3, 399--422.

\bibitem[GoJe2]{GJ} S.\ Gol\'enia and T.\ Jecko, \emph{Weighted Mourre's commutator theory, application to Schr\"odinger operators with oscillating potential}, J.\ Operator Theory {\bf 70} (2012).

\bibitem[G{\"u}Ke]{Gk} B.\ G{\"u}neysu and M.\ Keller, \emph{Scattering the geometry of weighted graphs}, Math.\ Phys.\ Anal.\ Geom.\ {\bf 21} (2018), no.\ 3, 28.

\bibitem[GoMo]{GoMo} S.\ Gol\'enia and S.\ Moroianu, \emph{Spectral analysis of magnetic Laplacians on conformally cusp manifolds}, Ann.\ Henri Poincar\'e {\bf 9} (2008), no.\ 1, 131--179.
\bibitem[GoHa]{GH} S.\ Gol\'enia and T.\ Haugomat, \emph{On the a.c.\ spectrum of the 1D discrete Dirac operator}, Methods Funct.\ Anal.\ Topology {\bf 20} (2014), no.\ 3.

\bibitem[GoTr]{GT} S.\ Gol\'enia and F.\ Truc, \emph{The magnetic Laplacian acting on discrete cusp}, Doc.\ Math.\ {\bf 22} (2017), 1709--1727.

\bibitem[HaKe]{HK} S.\ Haeseler and M.\ Keller, \emph{Generalized solutions and spectrum for Dirichlet forms on graphs}, in: Boundaries and Spectral Theory, Progress in Probability, Birkh\"auser (2011), 181--201.


\bibitem[Ic]{Ic2006} T.\ Ichinose, \emph{Spectral properties of tensor product-type operators and applications to $N$-body Schr\"odinger operators}, J.\ Math.\ Phys.\ {\bf 47} (2006), 062107.

\bibitem[JoKoPaSe]{JKLQ} T.F.\ Jones, E.G.\ Kostadinova, J.L.\ Padgett, and Q.\ Sheng, \emph{A series representation of the discrete fractional Laplace operator of arbitrary order}, J.\ Math.\ Anal.\ Appl.\ {\bf 504} (2021), no.\ 1, 125323.
\bibitem[Kat80]{Kato80}
T.~Kato, \emph{Perturbation Theory for Linear Operators}, 2nd ed., corrected printing,
Springer-Verlag, Berlin, 1980; reprinted in the series \emph{Classics in Mathematics}, 1995.
[Chapter IV, \S~1, Thm.~1.16, pp.~218--219]

\bibitem[Kw]{K2017} M.\ Kwa\'{s}nicki, \emph{Ten equivalent definitions of the fractional Laplace operator}, Fract.\ Calc.\ Appl.\ Anal.\ {\bf 20} (2017), no.\ 1, 7--51.
\bibitem[La]{L2000} N.\ Laskin, \emph{Fractional Quantum Mechanics and L\'evy Path Integrals}, Phys.\ Lett.\ A {\bf 268} (2000), no.\ 4--6, 298--305.

\bibitem[M\u{a}RiTi]{MRT} M.\ M\u{a}ntoiu, S.\ Richard, and R.\ Tiedra de Aldecoa, \emph{Spectral analysis for adjacency operators on graphs}, Ann.\ Henri Poincar\'e {\bf 8} (2007), no.\ 7, 1401--1423.

\bibitem[Mic]{Mic} N.\ Michaelis, \emph{Spectral theory of anisotropic discrete Schr\"odinger operators in dimension one}, Master's thesis, \url{https://www.math.u-bordeaux.fr/~sgolenia/Fichiers/diplom.michaelis.pdf}.

\bibitem[Mo1]{Mo81} E.\ Mourre, \emph{Absence of singular continuous spectrum for certain self-adjoint operators}, Comm.\ Math.\ Phys.\ {\bf 78} (1980), 391--408.

\bibitem[Mo2]{Mo83} E.\ Mourre, \emph{Op\'erateurs conjugu\'es et propri\'et\'es de propagation}, Comm.\ Math.\ Phys.\ {\bf 91} (1983), 279--300.
\bibitem[PaKoLiBuMaHy]{PKLBMH} J.L.\ Padgett, E.G.\ Kostadinova, C.D.\ Liaw, K.\ Busse, L.S.\ Matthews, and T.W.\ Hyde, \emph{Anomalous diffusion in
    one-dimensional disordered systems: a discrete fractional Laplacian method}, J.\ Phys.\ A: Math.\ Theor.\ {\bf 53} (2020), 135205.


\bibitem[PR16]{ParraRichard-Zd}
D.~Parra and S.~Richard,
\emph{Continuity of the spectra for families of magnetic operators on $\mathbb{Z}^d$},
Analysis and Mathematical Physics \textbf{6} (2016), no.~4, 327--343.
\href{https://doi.org/10.1007/s13324-015-0121-5}{doi:10.1007/s13324-015-0121-5}.
\bibitem[PaRi]{PR} D.\ Parra and S.\ Richard, \emph{Spectral and scattering theory for Schr\"odinger operators on perturbed topological crystals}, Rev.\ Math.\ Phys.\ {\bf 30} (2018), no.\ 4, 1850009. https://doi.org/10.1142/S0129055X18500095

\bibitem[Or]{Or} M.D.\ Ortigueira, \emph{Riesz potential operators and inverses via fractional differences}, Int.\ J.\ Math.\ Math.\ Sci.\ (2006), Article ID 48391.
\bibitem[Or1]{Ortigueira2014Riesz}
M.~D. Ortigueira, T.-M. Laleg-Kirati, and J.~A. Tenreiro Machado,
\newblock Riesz potential versus fractional Laplacian,
\newblock {\em Journal of Statistical Mechanics: Theory and Experiment}, 2014(9):P09032, 2014.
\newblock \url{https://doi.org/10.1088/1742-5468/2014/09/P09032}.

\bibitem[Or2]{Ortigueira_MD_DMDTFT_2024}
M.~D. Ortigueira,
\newblock Multidimensional Discrete-Time Fourier Transform and its properties,
\newblock summitted , 2024.
\bibitem[ReSi]{RS} M.\ Reed and B.\ Simon, \emph{Methods of Modern Mathematical Physics, Vol.\ I--IV: Analysis of Operators}, Academic Press.



\bibitem[Sa]{S} J.\ Sahbani, \emph{Mourre's theory for some unbounded Jacobi matrices}, J.\ Approx.\ Theory {\bf 135} (2005), no.\ 2, 233--244. https://doi.org/10.1016/j.jat.2005.04.009
\bibitem[Sch]{Schmudgen2012}
K.~Schmüdgen,
\emph {\em Unbounded Self-adjoint Operators on Hilbert Space}.
 Graduate Texts in Mathematics, vol.~265, Springer, Dordrecht, 2012.

\bibitem[ScSo]{ScSo2012} R.L.\ Schilling, R.\ Song, and Z.\ Vondra\v{c}ek, \emph{Bernstein Functions: Theory and Applications}, 2nd ed., De Gruyter, 2012.


\bibitem[Si]{SI2015} B.\ Simon, \emph{Operator Theory: A Comprehensive Course in Analysis, Part 4}, American Mathematical Society, 2015.
\bibitem[Sim05]{Simon-TraceIdeals}
B.~Simon,\emph{Trace Ideals and Their Applications}, 2nd ed.,
Amer. Math. Soc., Providence, RI, 2005.
\bibitem[Ta1]{T1} Y.\ Tadano, \emph{Long-range scattering for discrete Schr\"odinger operators}, Ann.\ Henri Poincar\'e {\bf 20} (2019). DOI:10.1007/s00023-019-00763-w

\bibitem[Ta2]{T2} Y.\ Tadano, \emph{Long-range scattering theory for discrete Schr\"odinger operators on graphene}, J.\ Math.\ Phys.\ {\bf 60} (2019), 052107. https://doi.org/10.1063/1.5087013

\bibitem[TaZa]{TZ} E.V.\ Tarasov and G.M.\ Zaslavsky, \emph{Fractional dynamics of systems with long-range interaction}, Commun.\ Nonlinear Sci.\ Numer.\ Simul.\ {\bf 11} (2006), no.\ 8, 885--898. https://doi.org/10.1016/j.cnsns.2006.03.005
\end{thebibliography}
\end{document}